  \pgfplotsset{compat=newest}
\newlength\figureheight
  \newlength\figurewidth
\pgfplotsset{%
    tick label style={font=\scriptsize},
    label style={font=\footnotesize},
    legend style={font=\footnotesize},
         every axis plot/.append style={very thick}
}
  \pgfplotsset{compat=newest}
\pgfplotsset{%
    tick label style={font=\scriptsize},
    label style={font=\footnotesize},
    legend style={font=\footnotesize},
         every axis plot/.append style={very thick}
}
\newcommand{\vb}{\vspace{3.2mm}}
\renewcommand\thmcontinues[1]{Continued}
\iffalse\usepackage{endnotes}
\let\footnote=\footnote
\newtheorem{lemma}{Lemma}
\newtheorem{corollary}{Corollary}
\newtheorem{theorem}{Theorem}
\newtheorem{remark}{Remark}
\renewcommand{\fnum@figure}[1]{\textbf{\figurename~\thefigure}. }
\renewcommand{\fnum@table}[1]{\textbf{\tablename~\thetable}. }
\begin{document}

\title[Moments of polynomial functionals of L\'evy processes]{Moments of polynomial functionals \\ of spectrally positive L\'evy processes}

\author[{P. Glynn, R. Jacobovic and M. Mandjes}]{Peter Glynn, Royi Jacobovic and Michel Mandjes}

\begin{abstract} 
Let $J(\cdot)$ be a compound Poisson process with rate $\lambda>0$ and a jumps distribution $G(\cdot)$ concentrated on $(0,\infty)$. In addition, let $V$ be a random variable which is distributed according to $G(\cdot)$ and independent from $J(\cdot)$. Define a new process $W(t)\equiv W_V(t)\equiv V+J(t)-t$, $t\geqslant 0$ and let $\tau_V$ be the first time that $W(\cdot)$ hits the origin. A long-standing open problem due to Iglehart (1971) and Cohen (1979) is to derive the moments of the functional $\int_0^\tau W(t)\,{\rm d}t$ in terms of the moments of $G(\cdot)$ and $\lambda$. In the current work, we solve this problem in much greater generality, i.e., first by letting $J(\cdot)$ belong to a wide class of  spectrally positive \color{black}  L\'evy processes and secondly, by considering more general class of functionals. We also supply several applications of the existing results, e.g., in studying the process $x\mapsto \int_0^{\tau_x}W_x(t)\,{\rm d}t$ defined on $x\in[0,\infty)$.
%A long-standing open problem concerns the calculation of the moments of the area beneath the M/G/1 workload graph during a busy period. While expressions for the first two moments were known, no results for higher moments were available. This paper includes a recursive algorithm to compute all moments in terms of the model primitives. Our results extend to any storage system fed by a superposition of a drifted Brownian motion and a subordinator with a secondary jump input, yielding the moments of a general class of polynomial functionals of the workload process. Some applications of these moments are also provided. 
\vb

\noindent
{\sc \textbf{Keywords:}} Subordinators -- Brownian motion -- L\'evy process with secondary jump input --  First passage area \color{black}-- Regenerative method -- Area in red.
\vb

\noindent
{\sc \textbf{AMS Subject Classification (MSC2010):}} 60J65, 60J75, 60K25, 91B30.
\vb

\noindent
{\sc Affiliations.} Peter Glynn is with the Department of Management Science and Engineering and Institute for Computational and Mathematical Engineering, 
Stanford University, United States of America. 

\noindent
Royi Jacobovic is with School of Mathematical Sciences, Tel Aviv University, P.O. Box 39040, Tel Aviv 6997801, Israel. %This work was performed while he was at Korteweg-de Vries Institute for Mathematics, University of Amsterdam, Science Park 904, 1098 XH Amsterdam, the Netherlands.

\noindent Michel Mandjes is with the Mathematical Institute, Leiden University, P.O. Box 9512,
2300 RA Leiden,
The Netherlands, and with the Korteweg-de Vries Institute for Mathematics, University of Amsterdam, Science Park 904, 1098 XH Amsterdam, the Netherlands. He is also affiliated to E{\sc urandom}, Eindhoven University of Technology, Eindhoven, the Netherlands, and the Amsterdam Business School, Faculty of Economics and Business, University of Amsterdam, Amsterdam, the Netherlands.

\noindent
The research of RJ and MM was supported by the European Union’s Horizon 2020 research and innovation programme under the Marie Skłodowska-Curie grant agreement no. 945045, and by the NWO Gravitation project NETWORKS under grant no. 024.002.003.  
 \includegraphics[height=1em]{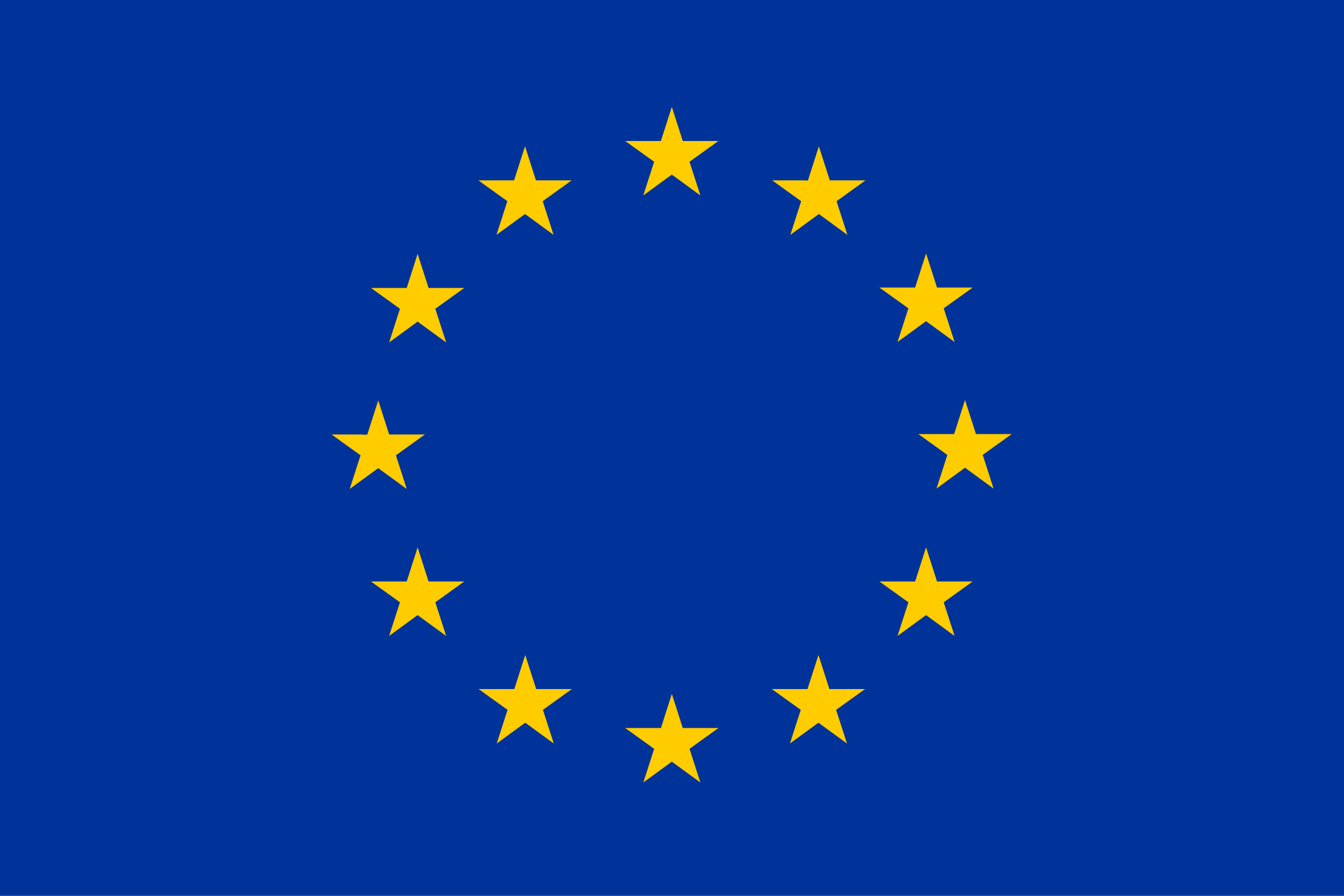} 
 
 \noindent RJ also acknowledges the support of the Israel Science Foundation, Grant \#3739/24. 

\noindent
Email addresses:  \url{glynn@stanford.edu, royijacobo@tauex.tau.ac.il, m.r.h.mandjes@uva.nl}.

\vb

\iffalse
\noindent
{\sc Acknowledgments.} 

\vb
\fi

\noindent
{\it Date}: {\today}.
\end{abstract}
\maketitle 

\newpage

\section{Introduction}\label{sec:introduction} 
L\'evy processes form a class of stochastic processes that start from zero, that have c\`adl\`ag sample-paths, and that have stationary independent increments; see e.g.\ the general surveys \cite{Applebaum2009,Bertoin1996}. They play a prominent role across various subdisciplines of applied probability, such as ruin theory \cite{Asmussen2010,Kyprianou2006} and queueing theory \cite{Debicki2015}, but they are also extensively used in mathematical finance \cite{Schoutens2003}. 

The current work aims to analyze the area under the graph of the L\'evy process until a stopping time. More specifically, in the setting considered we give the process an initial push $V>0$ at time~$0$, and record the L\'evy process (assumed to have a negative drift) until the first time $\tau$ that the level $0$ is hit. The main object of study is the joint distribution of the area under the graph of the L\'evy process in the interval between $0$ and $\tau$, the number of jumps of the L\'evy process in the same interval, and the hitting time $\tau$ itself. The objective is to characterize this joint distribution through a constructive scheme via which a family of joint moments can be recursively identified.

Our model is formally introduced as follows. We let $J(\cdot)$ denote a L\'evy process which is a superposition of a subordinator (i.e., a nondecreasing L\'evy process) and a drifted Brownian motion (BM). For this specific subclass of spectrally positive L\'evy processes it is well-known (see, e.g., \cite[Theorem 2.1, Lemma 2.14]{Kyprianou2006}) that the Laplace exponent  of $J(\cdot)$ necessarily takes the form 
\begin{equation}\label{eq: varphi}
    \varphi(\alpha)\equiv\varphi_{c,\sigma,\nu}(\alpha)\equiv-\log \mathbb{E}e^{-\alpha J(1)}=c\alpha-\frac{\sigma^2\alpha^2}{2}-\int_{(0,\infty)}\left(e^{-\alpha y}-1\right)\nu({\rm d}y)\ \ , \ \ \alpha\geqslant0\,,
\end{equation}
where $c\in\mathbb{R}$, $\sigma\geqslant0$ and  $\nu$ is a Borel measure on $(0,\infty)$ \color {black}  
 such that
\begin{equation}
\int_{(0,\infty)}\left(1\wedge y\right)\nu({\rm d}y)<\infty\,.   
\end{equation}
This means that $J(\cdot)$ could for instance be a compound Poisson process (CPP) with nonnegative jumps (by taking $\sigma=0$ and $\nu\{(0,\infty)\}<\infty$) or BM (by taking $\nu\{(0,\infty)\}=0$). Importantly, however, our class of L\'evy processes also includes processes with $\mathbb{P}$-a.s.\ infinitely many jumps over finite time intervals (i.e., $\nu\{(0,\infty)\}=\infty$), such as the Gamma process and the inverse Gaussian process. For an account of some other examples, see e.g.\ \cite[Section 2.2]{Debicki2015} and \cite[Section 1.2]{Kyprianou2006}. In the sequel we denote by
\begin{align}\label{eq:defrho}
\varrho\equiv\mathbb{E}J(1)=\varphi'(0+)=c+\int_{(0,\infty)}y\,\nu\left({\rm d}y\right)=c+\int_0^\infty \nu\left\{(y,\infty)\right\}{\rm d}y\,
\end{align}
the process' asymptotic drift, which we throughout assume to be negative.
In addition, we let $V$ denote the initial push given to the process $J(\cdot)$, defined as a positive random variable that is independent from $J(\cdot)$. The resulting process is, for $t\geqslant 0$,
\begin{align}W(t)\equiv W_V(t)\equiv V+J(t),
\end{align}
which we consider between $0$ and the stopping time
\begin{equation}
    \tau\equiv\tau_V\equiv\inf\left\{t\geqslant 0: W_V(t)=0\right\}\,.
\end{equation}
where we note that under $\varrho<0$ and $\mathbb{E}V<\infty$ we have $\mathbb{E}\tau<\infty$ (see, e.g., \cite[Section 2]{Kella1991b}). 
We proceed by stating the main objectives of this paper. 
With Borel function  $f:\mathbb{R}_+\rightarrow\mathbb{R}_+$\color{black}, we define the following four functionals associated to the process $W(\cdot)$:
\begin{align}\label{eq: functional def}
    &A_{f}\equiv A_{f}(V)\equiv\int_0^\tau f\circ W(t)\,{\rm d}t\equiv\int_0^\tau f(W(t))\,{\rm d}t \ \  ,\\& D_{f}\equiv D_{f}(V)\equiv\sum_{t\in[0,\tau]: W(t-)<W(t)} {f}\circ W(t-)\equiv\sum_{t\in[0,\tau]: W(t-)<W(t)} {f}( W(t-))\ .\nonumber
\end{align}
%\textcolor{blue}{(In the definition of $D_{g_i}$, we originally used $W(t-)$, see, e.g., (36) in the proof of Theorem 2)}
Special cases include: (1)~if $f= 1$, then $A_{f}$ equals the stopping time $\tau$, (2)~if $f$ is the identity ${\rm Id}$, then $A_{f}$ equals the area under the graph of $J(\cdot)$ until $\tau$, (3)~if ${f}=1$, then $D_{{f}}$ counts the number of jumps of $J(\cdot)$ until $\tau$, (4)~if ${f}={\rm Id}$, then $D_{{f}}$ represents {the aggregate of the values of the left-limit of $W(\cdot)$ at the jump epochs happening until $\tau$}.

Our concrete research question concerns joint moments of the functionals $A_{f_i}$ and $D_{g_i}$, for given Borel functions ${f_1}, {f_2}, {g_1}, {g_2}$. More specifically, we wish to derive explicit expressions of the high-order moments of the random vector 
\begin{equation}\label{eq: the vector}
\left(A_{f_1},A_{f_2},D_{g_1},D_{g_2}\right)
\end{equation} 
in terms of (i)~the moments of $V$, (ii)~the moments corresponding to the measure $\nu(\cdot)$, and (iii)~the parameters $c$ and $\sigma$. Our findings demonstrate how this can be done for the case that $f_1$, $f_2$, $g_1$ and $g_2$ are all of a polynomial form, the main result being a recursive scheme via which the joint expectation \begin{align}\label{eq:HOJM}
   \mathbb{E}\left\{A_{f_1}^{k_1}A_{f_2}^{k_2}D_{g_1}^{\ell_1}D_{g_2}^{\ell_2}\right\}
\end{align} can be evaluated for any combination of nonnegative integers $(k_1,k_2,\ell_1,\ell_2)$ and polynomials $f_1$, $f_2$, $g_1$ and $g_2$. For instance when picking $f_1=1$ and $f_2={\rm Id}$, we obtain joint moments pertaining to the stopping time $\tau$ and the area under the graph of $J(\cdot)$ until $\tau.$

An important special case, particularly from a queueing-theory perspective, is the one in which the process $J(\cdot)$ is of CPP type and $V$ has the same distribution as the jumps of $J(\cdot)$. We stress, though, that our results by no means do not require this choice; see also Remark \ref{rem:R1}. \color{black}

\subsection{Existing literature}
Thus far, only in very specific cases explicit expressions (or computation schemes) have been developed for the high-order moments of \eqref{eq: the vector}. We proceed by providing a succinct overview, where we subsequently discuss the settings with  spectrally positive \color{black}  L\'evy input, CPP input, and BM input.

\subsubsection*{ Spectrally positive \color{black}  L\'evy input} In the case when $J(\cdot)$ is a L\'evy process with nonnegative jumps (i.e., a  spectrally positive \color{black}  L\'evy process), it is well-known \cite[Section 8.1]{Kyprianou2006} that the Laplace-Stieltjes transform (LST) of $\tau$ is given by
\begin{align}
{\mathbb E}\,e^{-\beta\tau} = {\mathbb E} \,e^{-\varphi^{-1}(\beta)\,V},
\end{align} with $\varphi^{-1}(\cdot)$ the right inverse of the Laplace exponent $\varphi(\cdot)$. Thus, conceptually, the high order moments of the hitting time $\tau$ (i.e., $A_f$ with $f\equiv 1$) can be computed via repeated differentiation of this LST. The paper \cite{Abundo2013} focuses on the characterization of the LST for a more general class of functionals. That is, consider some nonnegative real-valued measurable function $f(\cdot)$ and assume that $V$ equals to a constant $x\in(0,\infty)$. Then, \cite[Theorem 2.3]{Abundo2013} states that 
\begin{equation}
    M_f(\lambda)\equiv M_f(\lambda;x)\equiv \mathbb{E}\exp\left\{-\lambda\int_0^{\tau_x}f\left[x+J(t)\right]{\rm d}t\right\}\ \ , \ \ \lambda>0
\end{equation}
is a solution of certain functional equation (equipped with boundary conditions) which is phrased in terms of the generator of $J(\cdot)$. This characterization of $M_f(\lambda)$ was shown to be helpful in specific examples, e.g.: 
\begin{enumerate}
    \item When $\sigma>0$ and there are no jumps, $M_{\text{Id}}(\cdot)$ is a solution of a Schr\"odinger equation which can be expressed in terms of Airy functions. 

    \item When $\sigma=0$ and the jump process is a Poisson process with rate $\theta\in(0,1)$ (i.e., all jump sizes are equal to one), it was shown that
    \begin{equation*}
        M_{\text{Id}}(\lambda)=\frac{\theta^{[x]+1}}{\prod_{j=0}^{[x]}[\theta+\lambda x-j]}\ \ , \ \ \forall \lambda>0
    \end{equation*}
    where $[x]$ is the integer part of $x$. 
\end{enumerate}
We note that, as stated in \cite[Example 5]{Abundo2013}, the application of this approach to the more general case in which there are Brownian motions and jumps together will not yield any explicit solution even when considering the relatively simple functions $f\equiv1$ or $f\equiv\text{Id}$. 

\color{black}
\subsubsection*{Compound Poisson input} Assume that $J(\cdot)$ is a CPP with rate $\lambda\in(0,\infty)$, nonnegative jumps (that are distributed as the initial push $V$) minus a unit drift.  In addition, denote $N\equiv D_g$ with $g\equiv 1$, i.e., $N$ is equal to the number of jumps that $J(\cdot)$ has  during the time interval $(0,\tau)$. For the case that $f\equiv 1$ as well, \cite[Section II.4.4]{Cohen2012} provides the joint LST of $N+1$ and $\tau$ in terms of a fixed point relation. In a similar fashion, we can derive the following fixed point relation for the joint LST of $N$ and $\tau$: for $\alpha,\beta>0$, 
\begin{equation} \label{eq: lst equation}
\gamma(\alpha,\beta)\equiv\mathbb{E}\,e^{-\alpha N-\beta \tau}=
% \text{\sout{$e^{-\alpha}$}}
\,\color{black}{\mathbb E}\, e^{-\left\{\beta + \lambda\left[1-e^{-\alpha}\color{black}\gamma(\alpha,\beta)\right]\right\}\,V}.
\end{equation}
% \color{blue} (Royi: We need to delete $e^{-\alpha}$ because by the definition of $D_g$, the first customer in the busy period is not counted) \color{black}
Suppose that we wish to compute $\mathbb{E}\tau^kN^\ell$.
To this end, apply the Fa\'a di Bruno's formula in combination with the general Leibniz rule in order to differentiate both sides  of \eqref{eq: lst equation}, $k$ times with respect to $\beta$, $\ell$ times with respect to $\alpha$, and then take $(\alpha,\beta)\downarrow(0,0)$. For every $k,\ell\in \mathbb{Z}_+$, this yields an equation from which one can isolate $\mathbb{E}\tau^kN^\ell$ and get an expression in terms of $\lambda$, the first $\ell+k$ moments of $G(\cdot)$ and $\mathbb{E}N^i\tau^j$ for $i\in\{1,\ldots \ell\}$, $j\in\{1,\ldots ,k\}$. As a result, we can recursively determine all the joint moments of $N$ and $\tau$. 

Consider the case where $V$ has the distribution $G_V(\cdot)$ which is not the same as the jump distribution of $J(\cdot)$. In this case, denote the time until $W_V(\cdot)$ hits the origin by $\tau_V$ and the number of jumps it has until $\tau_V$ by $N_V$. In this case, we know that given $V$, the number of records in the (decreasing) ladder process until $\tau$ has a Poisson distribution with rate $\lambda V$. Thus, by the strong Markov property of $J(\cdot)$ we shall apply the LST of a compound Poisson distribution in order to get,  for $\alpha,\beta>0$,
\begin{align}\label{eq: LST CPP!}
    \gamma(\alpha,\beta;G_V)&\equiv\mathbb{E}e^{-\alpha N_V+\beta\tau_V}\\&\nonumber=\mathbb{E}\exp\left\{-V\left[\beta+\lambda\left(1-\mathbb{E}e^{-[\alpha (N+1)+\beta\tau]}\right)\right]\right\}=e^{-V\left\{\beta+\lambda\left[1-e^{-\alpha}\gamma(\alpha,\beta)\right]\right\}}\ \ .
\end{align}
This means that the high-order moments of the random vector $(N_V,\tau_V)$ will all be linear combinations of the moments of $V$ with coefficients determined by the high-order moments of $(N,\tau)$.

Complications arise, however, when considering functionals of $W(\cdot)$ for which we lack a fixed-point relation for the LST, entailing that one cannot apply the procedure described above to recursively compute the corresponding moments. For example, in the case that $f(\cdot)={\rm Id}$, in which $A_f$ represents the area beneath the sample path of $W(\cdot)$ until $\tau$, no fixed-point relation for the relevant LST is known. 
Following other techniques, a few partial results were found though. More specifically, with $\mu_i$ denoting the $i$-th moment of the jump distribution of $J(\cdot)$, and $f(\cdot)={\rm Id}$,
\begin{enumerate}
    \item Iglehart \cite{Iglehart1971} showed that, applying the time normalization $c=1$,
\begin{equation}\label{eq: Iglehart1971}
\mathbb{E}A_f=\frac{\mu_2}{2(1-\rho)^2}\,,
\end{equation}
where $\rho\equiv 1+\mathbb{E}J(1)=\lambda\mathbb{E}V<1$.
In addition, he stated the challenge of finding the other moments of $A_f$ as an open problem.

\item Cohen \cite{Cohen1978} applied a level-crossing technique to derive that
\begin{equation}\label{eq: Cohen1978}
    \mathbb{E}A_f^2=\frac{\mu_4}{4(1-\rho)^3}+\frac{4\lambda\mu_2\mu_3}{3(1-\rho)^4}+\frac{5\lambda^2\mu_2^3}{\color{black}4\color{black}(1-\rho)^5}\,.
\end{equation}
\end{enumerate}
Concerning moments of order greater than two, to our knowledge, the only relevant existing references are:
\begin{enumerate}
    \item  Borovkov, Boxma and Palmowski \cite{Borovkov2003}, who showed that when the jump distribution of $J(\cdot)$ is exponential, the LST of $A_f$ (with $f(\cdot)$ still the identity ${\rm Id}$) can be written in terms of Whittaker functions.

    \item As mentioned above, Abundo \cite{Abundo2013} assumed that the jumps seize are all equal to one, and provided an explicit formula of the LST of $A_f$ (with $f(\cdot)$ still the identity ${\rm Id}$). 
\end{enumerate} 
Thus, even when considering the simpler setup with $J(\cdot)$ being a CPP, the computation of $\mathbb{E}A_f^k$ for $k=3,4,\ldots$ remained an open problem. 

Another related work concerning the instance that $J(\cdot)$ is a CPP with nonnegative jumps, is by Glynn \cite{Glynn1994}, relating  $\mathbb{E}D_g(x)$ to an associated Poisson's equation. A consequence of the findings in \cite{Glynn1994} is that when $g(\cdot)$ is a polynomial, then $\mathbb{E}D_g(x)$ is a polynomial in $x$. To the best of our knowledge, there is no additional literature regarding $\mathbb{E}D_g^\ell(x)$ when $\ell\geqslant 2$.

\subsubsection*{Brownian motion input} We proceed by considering the case where $V\equiv x$ for some $x\in(0,\infty)$ and $J(t)\equiv ct+B(t)$ for every $t\geq0$ such that $B(\cdot)$ is a standard BM and $c<0$.  Abundo and Del Vescovo \cite{Abundo2017} derived a computational scheme to derive the precise expression of the joint moments 
\begin{equation}
    I_{k,\ell}(x)\equiv\mathbb{E}\tau_x^k\left(\int_0^{\tau_x}\left[x-J(t)\right]{\rm d}t\right)^\ell\ \ , \ \ k,\ell\in\mathbb{Z}_+\,.
\end{equation}
They showed how this scheme produced the expressions of $I_{k,\ell}(x)$ for small values of $k$ and $\ell$. Furthermore, they proved \cite[Proposition 2.5]{Abundo2017} that $I_{k,\ell}(x)$ is a polynomial in $x$ with a degree equal to $k+2\ell$. Note that when $V$ is a positive random variable independent of $B(\cdot)$ and has a cdf $G$, $\int_0^\infty I_{k,\ell}(x){\rm d}G(x)$ is a linear combination of the first $k+2\ell$ moments of $V$.  

More results on the joint distribution of the first passage time $\tau_x$ and the so-called \textit{first passage area} $\int_0^{\tau_x}\left[x-J(t)\right]{\rm d}t$ appear in several works motivated by certain applications in physics (see, e.g., \cite{Kearney2005, Kearney2007, Kearney2014} and the references therein).
\color{black}
\subsection{Motivations} The computation of high-order joint moments of the type \eqref{eq:HOJM} has applications across various domains. In this subsection we subsequently point out how our results are relevant in the context of L\'evy fluctuation theory, in the context of insurance/risk models, and in a queueing setting.
 
 \subsubsection*{L\'evy processes with secondary jump input} 
 Consider a probability space $(\Omega,\mathcal{F},\mathbb{P})$ equipped with a filtration $\mathbb{F}\equiv(\mathcal{F}_t)$ which is right-continuous increasing sequence of ${\mathbb P}$-complete sub-$\sigma$-fields of $\mathcal{F}$. In addition, assume that $X(\cdot)\equiv\{X(t):t\geqslant 0\}$ is the right-continuous version of a L\'evy process with respect to $\mathbb{F}$, i.e., it is adapted to $\mathbb{F}$, and for every $0<s<t$ we have that $X(t-s)$ is distributed as $X(t)-X(s)$ and $X(t)-X(s)$ is independent of $\mathcal{F}_s$. Let $(\tau_n)_{n=0}^\infty$ be a strictly increasing sequence of stopping times in $\mathbb{F}$ such that $\tau_0\equiv0$, and define a counting process
 \begin{equation}
     N(t)\equiv\sup\left\{n\in{\mathbb N}:\, \tau_n\leqslant t\right\}\, ,
 \end{equation}
 for $t\geqslant 0$.
If $(U_n)_{n=0}^\infty$ is a sequence of random variables adapted to the filtration $(\mathcal{F}_{\tau_n})_{n=0}^\infty$, then a general description of a {\it L\'evy processes with secondary jump input} is given via
\begin{equation}
Y(t)\equiv X(t)+\sum_{n=0}^{N(t)}U_n\ ,   
\end{equation}
 for $t\geqslant 0$.
For more details regarding this class of processes and their applications in the applied probability literature, see e.g.\ the seminal papers \cite{Kella1991a,Kella1991b} and some consecutive works such as~\cite{Jacobovic2019,Jacobovic2021,Kella1998,Kella2023}.
 
Now, consider a regenerative process $\left\{Y(t):t\geqslant 0\right\}$ with independent cycles that are distributed as $\left\{W_V(t):0\leqslant t<\tau\right\}$. The thus constructed process is a special case of a L\'evy processes with secondary jump input, as introduced in the preceding paragraph. Since the distribution of $\tau$ is non-arithmetic, we can use the classical theory regarding regenerative processes; see, e.g., \cite[Chapter VI]{Asmussen2003} for a detailed account. It specifically implies that \color{black} as $\varrho<0$\color{black}, there exists a random variable $Y^*$ such that $Y(t)\rightarrow_{\rm d}Y^*$ %\textcolor{blue}{(I think that it's more convenient to write $\xrightarrow{d}$ instead of $\rightarrow_{\rm d}$. My suggestion is to replace it everywhere)} 
 as $t\to\infty$ and 
 \begin{equation}
     \mathbb{E}f\left(Y^*\right)=\frac{\mathbb{E}A_f}{\mathbb{E}\tau}\,,
 \end{equation}
 for any nonnegative Borel function $f(\cdot)$. 
 A popular approach to quantify the steady-state mean $\mathbb{E}f\left(Y^*\right)$ is via stochastic simulation, where a frequently used class of algorithms is of the \textit{regenerative} type; see, e.g., \cite[Chapter IV, Section 4]{Asmussen2007}. In the study of the probabilistic properties of the resulting estimators, one greatly benefits from the availability of high-order moments of $\left(A_f,\tau\right)$
 (e.g.\ when constructing confidence intervals, or upper bounds on the tail distribution).  

\subsubsection*{Insurance/risk model}
For a given $\theta\geqslant 0$, we define the process
\begin{equation}
    X_\theta(t)\equiv \theta+rt-J(t)\  , 
\end{equation}
for $t\geqslant 0$, where $J(\cdot)$ is a subordinator (for simplicity assumed to have zero drift) and $r\in(0,\infty)$ such that $\mathbb{E}J(1)<r$. In particular, when $J(\cdot)$ is a CPP with nonnegative jumps, the resulting process describes the evolution of the surplus level of an insurance company in the classical {\it Cram\'er-Lundberg model} \cite{Asmussen2010,Mandjes2023}.

Let $\tau_\theta$ be the {\it ruin time}, i.e., the first time when the surplus process $X_\theta(\cdot)$ becomes negative. A traditional objective in ruin theory concerns the evaluation of the ruin probability $\mathbb{P}\left\{\tau_\theta<\infty\right\}$, but recently attention shifted to the analysis of more general classes of functionals of $X_\theta(\cdot)$ conditionally on the event $\{\tau_\theta<\infty\}$. One could for instance consider functionals of the type
\begin{equation}\label{eq: cost-risk}
    C_\theta(h)\equiv\int_0^{\zeta_\theta}h\left[-X_\theta\left(\tau_\theta+t\right)\right]{\rm d}t
\end{equation}
for some nondecreasing right-continuous function $h:\mathbb{R}_+\rightarrow\mathbb{R}_+$; here
    \begin{equation}
        \zeta_\theta\equiv\inf\left\{t> 0:X_\theta\left(\tau_\theta+t\right)=0\right\},
    \end{equation}
so that $\zeta_\theta$ defines the length of the interval that starts at time $\tau_\theta$ and that ends at the first time the surplus level hits a nonnegative value again. 
Two examples of relevant functionals $h$ are: 
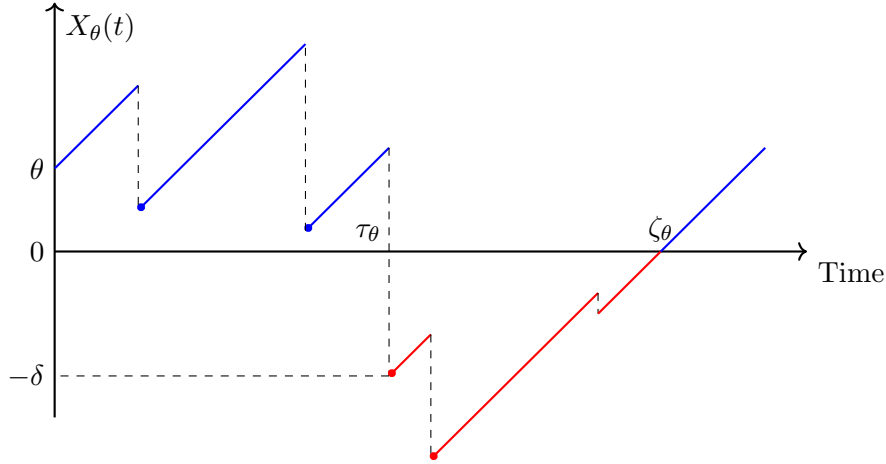
\begin{figure}
    \centering 
\begin{tikzpicture}[scale=1.1]
\draw[thick,->] (0,2) -- (9,2) node[anchor=north west]{$\text{Time}$};
\draw[thick,->] (0,0) -- (0,5) node[anchor=north west]{$X_\theta(t)$};

\draw (0,2) coordinate  node [anchor=east] {$0$};
\draw (0,3) coordinate  node [anchor=east] {$\theta$};

\draw [line width=0.8pt, blue] (0,3) -- (1,4);
\draw [{Circle[length=3pt]}-{},line width=0.8pt,blue](1,2.5)--(3,4.5);
\draw[dashed] (1,4)--(1,2.5);
\draw [{Circle[length=3pt]}-{},line width=0.8pt,blue](3,2.25)--(4,3.25);
\draw[dashed] (3,2.25)--(3,4.5);

\draw[dashed] (4,3.25)--(4,0.5);

\draw [{Circle[length=3pt]}-{},line width=0.8pt,red](4,0.5)--(4.5,1);
\draw[dashed] (4.5,1)--(4.5,-0.5);

\draw [{Circle[length=3pt]}-{},line width=0.8pt,red](4.5,-0.5)--(6.5,1.5);

\draw[dashed] (4,0.5)--(0,0.5);
\draw (0,0.5) coordinate  node [anchor=east] {$-\delta$};

\draw[dashed] (6.5,1.5)--(6.5,1.25);

\draw [line width=0.8pt,red](6.5,1.25)--(7.25,2);

\draw [line width=0.8pt,blue](7.25,2)--(8.5,3.25);

\draw (3.75,2) coordinate  node [anchor=south] {$\tau_\theta$};

\draw (7.25,2) coordinate  node [anchor=south] {$\zeta_\theta$};

\end{tikzpicture}
\caption{An illustration of $X_\theta(\cdot)$ when $r=1$ and $J(\cdot)$ is a CPP with nonnegative jumps. Note that conditionally on an  undershoot which equals $\delta$, the mirror image of $\left\{X_\theta(t):\tau_\theta\leqslant t\leqslant \zeta_\theta\right\}$ with respect to the horizontal axis is distributed as $\delta+J(t)-t$, revealing the relation to the current work.}\end{figure}

\begin{enumerate}
    \item The choice $h\equiv{1}$ yields $\zeta_\theta$ itself, in the literature referred to as the so called \textit{time to recovery}. This quantity was analyzed by e.g.\ Dos Reis \cite{Dos Reis1993,Dos Reis2000} in the framework of the classical Cram\'er-Lundberg model. \smallskip

    \item The choice $h(x)\equiv x$  yields the so called \textit{area in red}. This functional was analyzed by Picard \cite{Picard1994} in the Cram\'er-Lundberg model. Later, Lkabous and Wang \cite{Lkabous2023} derived the first moment of the area in red in a more general L\'evy-driven setup in terms of scale functions \cite[Chapter 8]{Kyprianou2006}.  
\end{enumerate}
We proceed by illustrating how the current results can be used to analyze the quantities defined above. 
Choosing $h(x)\equiv x^k$ for some $k\in\mathbb{Z}_+$, we have, for any $\ell\in{\mathbb N}$,
\begin{equation}\label{eq: risk moment}
\mu_{\theta}(k,\ell)\equiv\mathbb{E} C_\theta^\ell(h)=\mathbb{E}\left\{\left[\int_0^{\zeta_\theta-\tau_\theta}\left[-X_\theta\left(\tau_\theta+t\right)\right]^k{\rm d}t\right]^\ell\bigg|\ \tau_\theta<\infty\right\}.
\end{equation}
Then, as illustrated in Figure~1, conditioning and un-conditioning on the \textit{undershoot} $\Delta_\theta\equiv -X_\theta(\tau_\theta)$, combined with the strong Markov property of the driving L\'evy process, yields
\begin{equation}\label{eq: area in red}
    \mu_\theta(k,\ell)=\int_{(0,\infty)}\mathbb{E}\left\{\left[\int_0^{\widetilde{\tau}_\delta}\left[\delta+ J(t)-rt\right]^k{\rm d}t\right]\right\}^\ell\mathbb{P}\left\{\Delta_\theta\in \rm d\delta\right\}\,,
\end{equation}
where, for $\delta>0$,
\begin{equation}
    \widetilde{\tau}_\delta\equiv\inf\left\{t\geqslant 0:\delta+J(t)-rt=0\right\}\ .
\end{equation}
This reveals the relation to the current work in which we provide an expression of the expectation inside the integral in \eqref{eq: area in red}. In our paper we show that this expression is a polynomial in $\delta$, i.e., in order to evaluate $\mu_\theta(k,\ell)$ it is left to derive the moments of the distribution of $\Delta_\theta$. For example, when $J(\cdot)$ is a CPP with nonnegative jumps, the transform which appears in \cite[Exercise 1.2]{Mandjes2023} might be particularly useful for that purpose. 

\subsubsection*{M/G/1 queue}
The {\it Skorohod reflection} \cite[Chapter IX, Section 2]{Asmussen2003} of $W_V(\cdot)$ is defined, for $t\geqslant 0$, by
\begin{equation}\label{SKOR}
    \overline{W}(t)\equiv \overline{W}_V(t)\equiv W_V(t)-\inf_{0\leqslant s\leqslant t}\left[W_V(s)\right]^-,
\end{equation}
where $a^-\equiv\min\{-a,0\}$ for any $a\in\mathbb{R}$. 

Consider the case that $J(\cdot)$ is a CPP with nonnegative jumps minus a unit drift. In addition, let $V$ be a random variable which is independent of $J(\cdot)$ and distributed as the jumps of $J(\cdot)$. In that case $\overline{W}_x(\cdot)$ describes the evolution of the workload in an M/G/1 queue  with an initial workload level of~$x$. A detailed account of this type of storage systems can be found in, e.g., \cite[Chapters VIII \& X]{Asmussen2003}, \cite{Cohen2012} and \cite{Prabhu1998}. Note, that the process $\overline{W}_0(\cdot)$ stochastically regenerates whenever it hits the origin and the regeneration epochs have a non-arithmetic distribution. Therefore, by the regenerative-process theory, as found in  e.g.\ \cite[Chapter VI]{Asmussen2003}, under the evident stability condition, there exists a random variable $W^*$ such that $\overline{W}_0(t)\rightarrow_{\rm d}W^*$ as $t\to\infty$.

Let $N(\cdot)$ be the arrival process of the jumps in ${J}(\cdot)$, and suppose we observe the process $\overline{W}_0(\cdot)$ according to $N(\cdot)$ during the time window $[0,T]$. When trying to estimate $\mathbb{E}\,g(W^*)$ for some Borel function $g:\mathbb{R}_+\rightarrow \mathbb{R}_+$, we can use ideas similar to the ones we  discussed above in the context of L\'evy processes with secondary jumps. By the {\sc pasta} property \cite[Theorem 1]{Wolf1982},
\begin{equation}\label{eq: renewal reward}
\mathbb{E}\,g(W^*)\equiv\frac{\mathbb{E}D_g}{\mathbb{E}D_{\textbf{1}}}\,.   
\end{equation}
Equation \eqref{eq: renewal reward} suggests that a consistent estimator can be constructed via the regenerative method \cite[Section IV.4]{Asmussen2007}. To assess the statistical properties of this estimator (in the regime that $T\to\infty$), such as its asymptotic variance and asymptotic tail probabilities, it is of great help to have expression for the high-order moments of $\left(D_g,D_\textbf{1}\right)$.

\subsection{Organization}  
%First we derive the joint LST of $A$ and $\tau$ for the CPP case, building on the earlier paper \cite{Jain96}, by which in principle all (joint) high-order moments of $A$ and $\tau$ can be found by differentiation and inserting $0$. This approach, however, has various disadvantages: it does not cover the broad class of L\'evy processes that we were targeting, obtaining moments by repeated differentiation leads to complicated computations, it can only deal with $f(\cdot)$ being the identity function, and does not cover $D_g$ at all; the rest of the paper aims at remedying these shortcomings by developing a method to compute the high-order moments of $(A_f,D_g,\tau)$ for the broad class of L\'evy processes introduced above. 

The remainder of this paper is organized as follows.
Section~\ref{sec: first moment} presents the results about the computation of the {\it first moment} of \eqref{eq: the vector}, which leads to recursive computation schemes when $f(\cdot)$ is polynomial. Then, Section~\ref{sec: high-order moments} points out how to utilize the Markov property of the underlying L\'evy process, jointly with the results regarding the first moment of \eqref{eq: the vector}, so as to derive {\it high-order moments} of \eqref{eq: the vector}. Section~\ref{sec: cost process} includes some applications of the results from the former sections, aiming to prove {\it structural properties} of the functional $A_f(x)$, to be interpreted as a stochastic process in the initial level~$x$. 

In Section~\ref{sec: transform} we consider the {\it area beneath the workload process} in an M/G/1 queue until an independent exponentially distributed time $T_\beta$ with mean $\beta^{-1}$. It is explained how its LST can  expressed in terms of the joint LST of $\left(A_{\rm Id},\tau\right)$  (with ${\rm Id}$ denoting the identity function on $\mathbb{R}_+$). This translation formula, relating the area until hitting time $\tau$ to its counterpart until $T_\beta$,  is particularly useful, as we have access to the joint LST of $A_{\rm Id}$ and $\tau$ via the expressions for the joint moments presented in Section~\ref{sec: high-order moments}.

Section~\ref{sec: demonstration} demonstrates how the existing results in \cite{Cohen1978,Iglehart1971}, as given by \eqref{eq: Iglehart1971}--\eqref{eq: Cohen1978}, are easily reproduced by applying our results, and how explicit expression for other (joint) moments can be found. Finally, Section \ref{sec: open problems} presents a collection of open problems related to the research presented in this paper.

\section{First moment}\label{sec: first moment}
In this section, we first derive an expression for the first moment ${\mathbb E}A_f$ in Theorem \ref{thm: basic}, and use this to characterize its counterpart ${\mathbb E}D_f$ in Theorem \ref{thm: D_f}. In the second subsection we point out how these expressions can be made explicit in the case of polynomial $f$, with Theorem \ref{thm: Takacs} providing a recursive algorithm. 

\subsection{General expressions.}
In our analysis we denote, for each $i\in{\mathbb Z}_+$, with $G_V(\cdot)$ the cumulative distribution function of the initial push $V$,
\begin{equation}
\mu_i\equiv\int_0^\infty y^i\,{\rm d}G_V(y)\,.
\end{equation}
We in addition  observe that $\varrho$, as defined in \eqref{eq:defrho}, is negative and finite. As follows from, e.g., \cite[Section 2]{Kella1991b}, we thus have that the expected cycle length, given by
\begin{equation}\label{eq: tau expectation}
\mathbb{E}\tau=-\frac{\mu_1}{\varrho},
\end{equation}
is finite. The following theorem provides a constructive method to compute the first moment $\mathbb{E}A_f$ for a given Borel function $f$.

\begin{theorem}\label{thm: basic}
    Assume that $U$ and $\zeta$ are nonnegative random variables such that
    \begin{enumerate}
        \item $V$, $U$ and $\zeta$ are independent.

        \item $U$ is distributed uniformly on $[0,1]$.

        \item The LST of $\zeta$ is given by
        \begin{equation}\label{psi}
    \psi(\alpha)\equiv\mathbb{E}e^ {-\alpha\zeta}=\frac{\alpha\varphi'(0+)}{\varphi(\alpha)}\ \ , \ \ \alpha> 0\,.    
    \end{equation}
    
    \end{enumerate} Then, for any Borel function $f:\mathbb{R}_+\rightarrow\mathbb{R}_+$, 
\begin{equation}
    \mathbb{E}A_f= -\frac{1}{\varrho}\int_0^\infty x\,\mathbb{E}f\left(\zeta+xU\right){\rm d}G_V(x)\,.
\end{equation}
\end{theorem}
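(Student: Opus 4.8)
The plan is to compute $\mathbb{E}A_f$ via a regenerative / occupation-measure argument. Conditioning on the initial push $V=x$, write
\[
\mathbb{E}\bigl[A_f \mid V=x\bigr] = \mathbb{E}\int_0^{\tau_x} f(W_x(t))\,{\rm d}t = \int_0^\infty f(u)\,\mathbb{E}\bigl[L_x({\rm d}u)\bigr],
\]
where $L_x(\cdot)$ is the occupation measure of the killed process $W_x(\cdot)$ on $[0,\tau_x)$, i.e. $L_x(B)=\int_0^{\tau_x}\mathbbm{1}\{W_x(t)\in B\}\,{\rm d}t$. So the whole statement reduces to identifying the expected occupation measure $m_x({\rm d}u)\equiv\mathbb{E}[L_x({\rm d}u)]$ and showing that, after integrating against $G_V$, it matches the law of $\zeta+xU$ scaled by $-x/\varrho$. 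Concretely I would aim to prove the pointwise identity
\[
\int_0^\infty m_x({\rm d}u)\,{\rm d}G_V(x) \;=\; -\frac{1}{\varrho}\,\mathbb{E}\bigl[\,x\cdot(\text{law of }\zeta+xU)\,\bigr],
\]
which is exactly the claimed formula once tested against $f$.

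The key computational input is the known Laplace-transform identity for the expected occupation measure of a spectrally positive L\'evy process started at $x$ and killed at its first passage below $0$. Taking Laplace transforms in the spatial variable $u$, standard fluctuation theory (the analogue for spectrally positive processes of the resolvent/scale-function identities in \cite[Chapter 8]{Kyprianou2006}) gives, for $\alpha>0$,
\[
\int_0^\infty e^{-\alpha u}\,m_x({\rm d}u)
= \frac{1-e^{-\alpha x}}{\varphi(\alpha)} ,
\]
the point being that $\mathbb{E}e^{-\alpha J(1)}=e^{-\varphi(\alpha)}$ with $\varphi$ as in \eqref{eq: varphi}, and that $-\varphi$ is the Laplace exponent of a process with no negative jumps so one-sided passage is governed by $\varphi$ alone. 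I would derive this either by a direct martingale/optional-stopping argument applied to $e^{-\alpha W_x(t)}$ together with $e^{-\varphi(\alpha)t}$ (using that $W_x(\tau_x)=0$ by absence of negative jumps, so there is no overshoot), or by citing the corresponding resolvent formula. Now integrate over $x$ against $G_V$:
\[
\int_0^\infty e^{-\alpha u}\Bigl(\int_0^\infty m_x({\rm d}u)\,{\rm d}G_V(x)\Bigr)
= \frac{1}{\varphi(\alpha)}\int_0^\infty\bigl(1-e^{-\alpha x}\bigr)\,{\rm d}G_V(x) .
\]
On the right-hand side factor out $\varphi'(0+)=\varrho$ to recognize $\psi(\alpha)=\alpha\varrho/\varphi(\alpha)$, the LST of $\zeta$:
\[
\frac{1-e^{-\alpha x}}{\varphi(\alpha)} = \frac{1}{\alpha\varrho}\,\psi(\alpha)\,\bigl(1-e^{-\alpha x}\bigr)
= -\frac{1}{\varrho}\,\psi(\alpha)\int_0^x e^{-\alpha v}\,{\rm d}v
= -\frac{x}{\varrho}\,\psi(\alpha)\,\mathbb{E}\,e^{-\alpha xU},
\]
using $\int_0^x e^{-\alpha v}{\rm d}v = x\,\mathbb{E}e^{-\alpha xU}$ with $U$ uniform on $[0,1]$. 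By independence of $\zeta$ and $U$, the product $\psi(\alpha)\,\mathbb{E}e^{-\alpha xU}$ is precisely the LST of $\zeta+xU$. Hence
\[
\int_0^\infty e^{-\alpha u}\Bigl(\int_0^\infty m_x({\rm d}u)\,{\rm d}G_V(x)\Bigr)
= -\frac{1}{\varrho}\int_0^\infty x\,\mathbb{E}\,e^{-\alpha(\zeta+xU)}\,{\rm d}G_V(x).
\]
Uniqueness of Laplace transforms of (finite, since $\mathbb{E}\tau<\infty$) measures on $[0,\infty)$ then identifies the two measures, and testing against a general nonnegative Borel $f$ (monotone class / monotone convergence starting from $f=e^{-\alpha\cdot}$, then $f=\mathbbm{1}_{[0,a]}$, then simple functions) yields the stated formula; Tonelli justifies all interchanges since everything is nonnegative.

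The main obstacle is establishing the occupation-measure Laplace identity $\int_0^\infty e^{-\alpha u}m_x({\rm d}u)=(1-e^{-\alpha x})/\varphi(\alpha)$ cleanly across the full class of spectrally positive L\'evy processes considered (including the infinite-activity subordinator-plus-Brownian cases), and in particular making the optional-stopping step rigorous: one must check that $e^{-\alpha W_x(t\wedge\tau_x)+\varphi(\alpha)\,(t\wedge\tau_x)}$ is a genuine (uniformly integrable enough) martingale, that $\tau_x<\infty$ a.s. with $\mathbb{E}\tau_x<\infty$ (which is given, via $\varrho<0$, $\mathbb{E}V<\infty$), and that there is no overshoot at $\tau_x$ because $J$ has no negative jumps and $W_x$ creeps downward — for the purely-jump subordinator case with drift $c>0$ creeping holds, while if $\sigma>0$ Brownian creeping also holds; one should remark that the negative drift $\varrho<0$ together with the spectral positivity guarantees $W_x$ actually reaches $0$. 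Everything after that identity is bookkeeping with Laplace transforms and Tonelli.
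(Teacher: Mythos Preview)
Your approach is essentially the paper's: both reduce to fixed $V=x$, identify the Laplace transform of the expected occupation measure of $W_x$ on $[0,\tau_x)$, and factor it as the LST of $\zeta+xU$. The only substantive difference is that the paper dispatches your ``main obstacle'' in one line by citing \cite[Theorem~5.1]{Kella1991a}, which gives directly that the normalized occupation measure has LST $\frac{\alpha\varphi'(0+)}{\varphi(\alpha)}\cdot\frac{1-e^{-\alpha x}}{\alpha x}$, whereas you propose to re-derive this via the Kella--Whitt martingale / optional stopping; your route is more self-contained but longer.

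One caution: your displayed occupation-measure identity has a sign error. Since $\varphi$ is concave with $\varphi(0)=0$ and $\varphi'(0+)=\varrho<0$, we have $\varphi(\alpha)<0$ for all $\alpha>0$, so $(1-e^{-\alpha x})/\varphi(\alpha)$ is negative while the occupation integral is positive; the correct formula is $(1-e^{-\alpha x})/(-\varphi(\alpha))$. You then drop a compensating minus sign in the step $\frac{1}{\alpha\varrho}(1-e^{-\alpha x})=-\frac{1}{\varrho}\int_0^x e^{-\alpha v}\,{\rm d}v$ (the right-hand side should carry $+\frac{1}{\varrho}$), so the two slips cancel and your final expression is correct.
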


\begin{proof}
First it is observed that it suffices to provide the proof under the assumption that $V=x$, $\mathbb{P}$-a.s.\ for a given $x>0$. Then, the claim follows via an integration with respect to ${\rm d} G_V(\cdot)$.  

Since $\mathbb{E}\tau=-{x}/{\varrho}<\infty$, there is a probability measure on $\left(\mathbb{R}_{+},\mathcal{B}(\mathbb{R}_+)\right)$ which is defined by, for any given $E\in\mathcal{B}(\mathbb{R}_+)$,
\begin{equation}\label{eq: P stationary}
\mathbb{P}^*\{E\}%\equiv\mathbb{P}_{G_V,c,\sigma^2,\nu}\{E\}
\equiv\frac{1}{\mathbb{E}\tau}\,\mathbb{E}\Big[\int_0^\tau\textbf{1}_{\left\{W(t)\in E\right\}}{\rm d}t\Big]\ \ .
\end{equation} 
In addition, we note that, with $\mathbb{E}^*$ denoting expectation with respect to $\mathbb{P}^*$,
\begin{equation}\label{eq: A_f1}
\mathbb{E}A_f=\mathbb{E}\Big[\int_0^\tau f\circ W(t)\,{\rm d}t\Big]=-\frac{x\, \mathbb{E}^*f}{\varrho}\,.
\end{equation}
Now, let $\xi$ be a random variable such that $\mathbb{P}\left\{\xi\in E\right\}=\mathbb{P}^*\left\{E\right\}$ for every $E\in\mathcal{B}\left(\mathbb{R}_+\right)$. Then \cite[Theorem 5.1]{Kella1991a} implies that, for any $\alpha>0$,
\begin{equation}
\mathbb{E}e^{-\alpha\xi}=\frac{\alpha\varphi'(0+)}{\varphi(\alpha)}\cdot\frac{1-e^{-\alpha x}}{\alpha x}\,.
\end{equation}
Hence, $\xi$ 
is distributed as the sum of a `Pollaczek–Khinchine distributed random variable' (having LST ${\alpha\varphi'(0+)}/{\varphi(\alpha)}$, that is; see e.g.\ \cite[Theorem 4.2]{Kella1991a}) and an independently sampled uniformly distributed random variable on $[0,x]$.
This finishes the proof. 
\end{proof}

\begin{remark}\normalfont \label{rem:R1}
The proof of Theorem \ref{thm: basic} reveals that we can actually work with the initial push $V$ being a given, deterministic level: if $V=x>0$, $\mathbb{P}$-a.s., then 
\begin{equation}
    \mathbb{E}A_f= -\frac{1}{\varrho}x\,\mathbb{E}f\left(\zeta+xU\right)\,.
\end{equation}
Along the same lines, we wish to emphasize that all main results presented in this paper thus cover the case of starting at a given, deterministic level $x$ (the exception being the M/G/1-specific results of Section \ref{sec: transform}). It is noted though that \eqref{eq: lst equation}, \eqref{eq: Iglehart1971} and \eqref{eq: Cohen1978}, all  results from the existing literature that we discussed in the M/G/1 context, require that the initial level be distributed as the jumps of the driving compound Poisson process. \color{black}
\end{remark}

\begin{remark}\normalfont
 In \eqref{psi}, $\psi(\cdot)$ is expressed in terms of the generalized Pollaczek–Khinchine formula \cite[Theorem 4.2]{Kella1991a}. In fact, if the driving process is a CPP, then the density of $\zeta$ can be written as the solution to an integral equation \cite[Subsection 1.7]{Brill2008}. \end{remark}
 
\begin{remark}
    \normalfont  Consider a regenerative process $Y(\cdot)$ in which the cycles have the distribution of  $\{W(t):0\leqslant t<\tau\}$. This process has a finite expected cycle length, so that the probability measure $\mathbb{P}^*$ which was defined in \eqref{eq: P stationary} has multiple appealing interpretations. (1)
    The renewal reward theorem implies that for every set $E\in\mathcal{B}\left(\mathbb{R}\right)$, $\mathbb{P}^*$ assigns to $E$ the long-run average time that the process $Y(\cdot)$ attains values in $E$. (2) $\mathbb{P}^*$ is the  distribution in a stationary version of $Y(\cdot)$,  as constructed in \cite{Thorisson1992}. (3) Since $\tau$ has a non-arithmetic distribution, the probability measure $\mathbb{P}^*$ also defines the limiting distribution of $Y(t)$ as $t\to\infty$ \cite[Chapter VI]{Asmussen2003}.
\end{remark}

The second main result of this section concerns the relation between the first moment of $A_f$ and the first moment of $D_f$. When the jump intensity $\lambda\equiv\nu\{(0,\infty)\}$ is finite, it follows immediately from the {\sc pasta} property \cite{Wolf1982}. In the case this jump intensity is infinite, however, our proof reveals that a considerably more delicate argumentation is needed.
\begin{theorem}\label{thm: D_f}
Let $f(\cdot)$ be a nonnegative Borel function on $\mathbb{R}_+$. Then, 
\begin{equation}\label{eq:Little-like}
    \mathbb{E}D_f=\lambda\,\mathbb{E}A_f
    \end{equation}
    with the convention that $0\cdot\infty=0$. 
\end{theorem}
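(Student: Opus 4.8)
The plan is to reduce the claim to an identity for the occupation measure $\mathbb P^*$ of the stationary (cycle) version of $W(\cdot)$, and then to handle the infinite-activity case by approximation. For the finite-activity case ($\lambda<\infty$), the jumps of $W(\cdot)$ form, conditionally on the path, a Poisson process of rate $\lambda$ in time, independent of the position; by the {\sc pasta} property \cite{Wolf1982}, the empirical distribution of the values $W(t-)$ sampled at jump epochs during a cycle equals the time-average occupation law, so that
\begin{equation}
\mathbb E D_f=\mathbb E\Big[\sum_{t\in[0,\tau]:W(t-)<W(t)}f(W(t-))\Big]=\lambda\,\mathbb E\Big[\int_0^\tau f(W(t))\,{\rm d}t\Big]=\lambda\,\mathbb E A_f.
\end{equation}
More carefully, one conditions on $V=x$, uses the compensation formula (L\'evy system) for the Poisson random measure of jumps of $J(\cdot)$ together with optional stopping at $\tau$: since the jump intensity measure is $\lambda\,{\rm d}t\otimes(\nu/\lambda)({\rm d}y)$ and the summand $f(W(t-))$ does not depend on the jump size, the $y$-integral is trivial and one is left with $\lambda\,\mathbb E\int_0^\tau f(W(t-))\,{\rm d}t=\lambda\,\mathbb E A_f$ (left limits and the path agree Lebesgue-a.e.).

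For the infinite-activity case the sum $D_f$ is over all jump epochs, of which there are $\mathbb P$-a.s.\ infinitely many, so both sides may be infinite and the clean {\sc pasta} argument is unavailable. Here I would truncate: for $\varepsilon>0$ let $J^{(\varepsilon)}(\cdot)$ be the L\'evy process obtained by keeping only jumps of size $>\varepsilon$ (a finite-activity spectrally positive process) and compensating the drift so that the Laplace exponent converges to $\varphi$; write $W^{(\varepsilon)}$, $\tau^{(\varepsilon)}$, $A_f^{(\varepsilon)}$, $D_f^{(\varepsilon)}$ for the corresponding objects, started from the same $V$. Apply the finite-activity identity $\mathbb E D_f^{(\varepsilon)}=\lambda_\varepsilon\,\mathbb E A_f^{(\varepsilon)}$ where $\lambda_\varepsilon=\nu\{(\varepsilon,\infty)\}$. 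The summands of $D_f^{(\varepsilon)}$ are a subset of those of $D_f$ (the jump epochs of size $>\varepsilon$), and as $\varepsilon\downarrow 0$ one has $\lambda_\varepsilon\uparrow\lambda$ and, by monotone convergence over the increasing family of jump epochs, $D_f^{(\varepsilon)}\uparrow D_f$ provided one can also control the drift correction and the change in $\tau$. To avoid the nuisance of $\tau^{(\varepsilon)}\ne\tau$, a cleaner route is to keep the {\it same} sample path of $J(\cdot)$ and simply restrict the sum $D_f$ to jumps of size $>\varepsilon$, call it $D_f^{[\varepsilon]}$; then $D_f^{[\varepsilon]}\uparrow D_f$ by monotone convergence, and for each fixed $\varepsilon$ the large jumps of $J(\cdot)$ on $[0,\tau]$ again form a Poisson process of rate $\lambda_\varepsilon$ independent of position, so the compensation formula gives $\mathbb E D_f^{[\varepsilon]}=\nu\{(\varepsilon,\infty)\}\,\mathbb E A_f$ with the {\it same} $A_f$. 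Letting $\varepsilon\downarrow 0$ and using $\nu\{(\varepsilon,\infty)\}\uparrow\lambda$ (possibly $=\infty$) together with monotone convergence on the left yields $\mathbb E D_f=\lambda\,\mathbb E A_f$, with the convention $0\cdot\infty=0$ covering the degenerate case $A_f=0$ a.s.

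The main obstacle is the rigorous justification of the compensation/optional-stopping step at the random time $\tau$ in the infinite-activity regime — one must check that $\tau$ is a stopping time with $\mathbb E\tau<\infty$ (already available from \eqref{eq: tau expectation}) and that the integrand $f(W(t-))$, though merely Borel and possibly unbounded, is nonnegative so that the compensation formula holds as an identity in $[0,\infty]$ without integrability hypotheses; this is where the ``convention $0\cdot\infty=0$'' and the nonnegativity of $f$ are essential. A secondary point to handle with care is that restricting $D_f$ to large jumps does {\it not} change $W(\cdot)$ or $\tau$, so the occupation integral $A_f$ is genuinely the same for every $\varepsilon$ — this is what makes the monotone passage to the limit produce the factor $\lambda=\nu\{(0,\infty)\}$ cleanly, and it is the conceptual crux the authors allude to when they say ``a considerably more delicate argumentation is needed.''
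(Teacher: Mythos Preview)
Your proposal is correct, and the ``cleaner route'' you settle on --- restricting $D_f$ to jumps of size $>\varepsilon$ while keeping the \emph{same} path $W(\cdot)$ and stopping time $\tau$, then letting $\varepsilon\downarrow 0$ by monotone convergence --- is the right idea and is rigorously justified by the compensation formula for the Poisson random measure of jumps (with the predictable, nonnegative integrand $f(W(t-))\mathbf 1_{\{y>\varepsilon\}}$).

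The paper's proof is different in implementation though similar in spirit. Rather than a single threshold $\varepsilon$ and the L\'evy system, it uses the L\'evy--It\^o decomposition to split the jump part into a countable family of independent compound Poisson pieces $\Psi_i$ (jumps in dyadic size ranges), embeds $W(\cdot)$ into a regenerative process $Y(\cdot)$, and then applies the \textsc{pasta} property of Wolff together with the renewal--reward theorem to each counting process $N_i$ separately, obtaining $\mathbb E\int_0^\tau f\circ W(t-)\,{\rm d}N_i(t)=\lambda_i\,\mathbb E A_f$; summing over $i$ gives the result. Your compensation-formula route is more direct and avoids the regenerative embedding altogether, at the cost of invoking a stochastic-calculus tool (the L\'evy system at a stopping time) that the paper's intended audience may find less familiar than \textsc{pasta}. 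One small point of exposition: in your finite-$\varepsilon$ step, the phrase ``independent of position'' is misleading --- the large-jump process is certainly \emph{not} independent of $W(\cdot)$; what makes the identity work is predictability of $f(W(t-))$, not independence, and you correctly rely on the compensation formula rather than naive \textsc{pasta} there.
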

Remark that in the case of drifted Brownian motion, as discussed in the introduction, we have that, in \eqref{eq:Little-like}, $\lambda$ is to be taken equal to $0$. 

The proof of Theorem \ref{thm: D_f} (as well as proof of other claims in this section) is based on the L\'evy-It\^{o} decomposition \cite[Theorem 2.1]{Kyprianou2006} which states that $J(\cdot)$ can be expressed as the superposition of four independent processes: 
\begin{enumerate}
    \item A deterministic drift process  $t\mapsto ct$ with $c<0$.
    
    \item $B_{\sigma}(\cdot)\equiv\sigma B(\cdot)$ which is a driftless Brownian motion with a volatility  $\sigma\geqslant0$.

    \item $\Psi_0(\cdot)$ which is a CPP with an arrival rate $\lambda_0\equiv\nu\left\{[1,\infty)\right\}$ and jump distribution ${\lambda_0}^{-1}\,{\nu({\rm d}y)}$.

    \item A sum of a sequence of independent processes $\Psi_1(\cdot),\Psi_2(\cdot),\ldots$ such that for each $i\in{\mathbb N}$, $\Psi_i(\cdot)$ is a CPP which has an arrival rate $\lambda_i\equiv\nu\left\{[2^{-(i+1)},2^{-i})\right\}$ and jump distribution ${\lambda_i}^{-1}\,{\nu({\rm d}y)}$.
\end{enumerate}
Note that in the above-mentioned description of the decomposition, we apply the convention that whenever $\lambda_i=0$, then $\Psi_i(\cdot)\equiv0$, $\mathbb{P}$-a.s.
In addition, $\lambda\equiv \sum_{i=0}^\infty\lambda_i$. 

\color{black} The proof of Theorem \ref{thm: D_f} is based on the main result of \cite{Wolf1982}, a classical finding in the applied probability literature that has become known as the \textit{Poisson arrivals see times averages} ({\sc pasta}) property. Morally, the {\sc pasta} result provides us with conditions under which a system's stationary distribution at Poisson epochs coincides with the system's long-run distribution.  For convenience and completeness, we are now providing its precise statement. 
\begin{theorem}[{\sc pasta}] Consider a probability space $(\Omega,\mathcal{F},{\mathbb P})$ with the following objects:
\begin{itemize}
    \item[$\circ$] $\mathbb{F}\equiv(\mathcal{F}_t)_{t\geqslant 0}$ is an increasing right-continuous sequence of ${\mathbb P}$-complete sub-$\sigma$-fields of $\mathcal{F}$.
    
    \item[$\circ$] $X(\cdot)\equiv\left\{X(t):t\geqslant 0\right\}$ is a c\`adl\`ag stochastic process.

    \item[$\circ$] $N(\cdot)\equiv \{N(t):t\geqslant 0\}$ is a simple c\`adl\`ag counting process, which is adapted to $\mathbb{F}$ and which has stationary increments.

    \item[$\circ$] For any $t,h\geqslant 0$, the increment $N(t+h)-N(t)$ is independent of $\mathcal{F}_t$.
\end{itemize}
For every $n\geqslant 0$ denote 
\begin{equation}
    T_n\equiv \inf\left\{t\in{\mathbb N}: N(t)\geqslant n\right\}.
\end{equation}
Then, the (continuous-time) process $\{X(t):t\geqslant 0\}$ and the (discrete-time) process $\{X(T_n-):n=0,1,2,\ldots\}$ have the same ergodic distribution and hence for any nonnegative Borel function $f(\cdot)$, the limits 
\begin{equation*}
    \lim_{t\to\infty}\frac{1}{t}\int_0^tf\circ X(s)\,{\rm d}s\ \ , \ \ \lim_{n\to\infty}\frac{1}{n}\sum_{i=1}^nf\circ X\left(T_n-\right)\ \ , \ \ \lim_{t\to\infty}\frac{1}{N(t)}\int_0^t f\circ X(s-)\,{\rm d}N(s)\,,
\end{equation*}
exist (possibly infinite) and they are equal $\mathbb{P}$-a.s. 
\end{theorem}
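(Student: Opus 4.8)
The plan is to prove the statement by following the martingale argument of \cite{Wolf1982}. First I would reduce to a bounded function: assume $0\le f\le 1$ and recover the general nonnegative case at the end by applying the bounded case to $f\wedge m$ and letting $m\to\infty$, monotone convergence handling the three averages simultaneously (and producing the possibly infinite limits). Next I would observe that the hypotheses already determine the law of $N$: since $N$ is adapted to $\mathbb{F}$ we have $\sigma(N(s):s\le t)\subseteq\mathcal{F}_t$, so the independence of $N(t+h)-N(t)$ from $\mathcal{F}_t$ forces $N$ to be a simple counting process with independent and stationary increments, i.e.\ a homogeneous Poisson process of some rate $\lambda\in[0,\infty)$; take $\lambda>0$ (if $\lambda=0$ there are no arrivals and the point-average is vacuous). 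Consequently $\mathbb{E}N(t)=\lambda t$, all moments of $N(t)$ are finite, and $N(t)/t\to\lambda$ $\mathbb{P}$-a.s. Since $X$ is c\`adl\`ag, $\int_0^t f\circ X(s)\,{\rm d}s=\int_0^t f\circ X(s-)\,{\rm d}s$, and the third average in the statement is literally the second one re-indexed by $n=N(t)$; so it suffices to compare the time-average $t^{-1}\int_0^t f\circ X(s)\,{\rm d}s$ with the point-average $n^{-1}\sum_{i=1}^n f\circ X(T_i-)$.

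The heart of the argument is to show that
\[
M(t)\;:=\;\sum_{i:\,T_i\le t} f\big(X(T_i-)\big)\;-\;\lambda\int_0^t f\big(X(s)\big)\,{\rm d}s\;=\;\int_0^t f\big(X(s-)\big)\,{\rm d}\big(N(s)-\lambda s\big)
\]
is an $\mathbb{F}$-martingale (a genuine one, as $\mathbb{E}|M(t)|\le 2\lambda t\|f\|_\infty<\infty$). The integrand $H(s):=f(X(s-))$ is left-continuous and $\mathbb{F}$-adapted, hence predictable. I would approximate it by dyadic step processes $H_n(s):=H(k2^{-n})$ for $s\in(k2^{-n},(k+1)2^{-n}]$; left-continuity gives $H_n(s)\to H(s)$ for every $s$, and dominated convergence (using $\|f\|_\infty<\infty$ and $\mathbb{E}N(t)=\lambda t<\infty$) lets one pass to the limit in both $\int_0^t H_n\,{\rm d}N$ and $\int_0^t H_n\,{\rm d}s$. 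For a step integrand the martingale property reduces to the single identity: for $v<u$ and a bounded $\mathcal{F}_v$-measurable $\xi$,
\[
\mathbb{E}\big[\xi\,(N(u)-N(v))\mid\mathcal{F}_v\big]=\xi\,\mathbb{E}\big[N(u)-N(v)\big]=\lambda(u-v)\,\xi,
\]
which is exactly the lack-of-anticipation hypothesis together with stationary increments; summing over the mesh and letting $n\to\infty$ yields that $M$ is a martingale. (This is the whole point of {\sc pasta}: arrivals evaluate the state through the left limit $X(s-)$, which they cannot anticipate, so the compensator of the summation term is $\lambda\int_0^t f\circ X(s)\,{\rm d}s$.)

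It then remains to deduce the a.s.\ statement. Since $N$ is Poisson, $\langle M\rangle_t=\lambda\int_0^t f(X(s-))^2\,{\rm d}s\le\lambda\|f\|_\infty^2\,t$, so $\mathbb{E}[(M(n+1)-M(n))^2]=\mathbb{E}[\langle M\rangle_{n+1}-\langle M\rangle_n]\le C:=\lambda\|f\|_\infty^2$; Kolmogorov's strong law for orthogonal martingale increments gives $M(n)/n\to0$ a.s., and Doob's $L^2$ maximal inequality gives $\mathbb{E}[\sup_{n\le t\le n+1}(M(t)-M(n))^2]\le 4C$, whence $\sum_n n^{-2}\sup_{n\le t\le n+1}(M(t)-M(n))^2<\infty$ a.s.\ and therefore $\sup_{n\le t\le n+1}|M(t)-M(n)|/n\to0$ a.s. Combining, $M(t)/t\to0$ $\mathbb{P}$-a.s. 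Now
\[
\frac{M(t)}{t}=\frac{N(t)}{t}\cdot\frac{1}{N(t)}\sum_{i=1}^{N(t)} f\big(X(T_i-)\big)-\lambda\cdot\frac{1}{t}\int_0^t f\big(X(s)\big)\,{\rm d}s,
\]
and since $N(t)/t\to\lambda>0$ a.s., the time-average converges in $[0,\infty]$ if and only if the point-average does, and then the two limits agree; taking $f=\mathbf{1}_B$ for Borel $B$ gives the coincidence of the ergodic distributions of $\{X(t)\}$ and $\{X(T_n-)\}$. The existence of the limits themselves is inherited from the ergodicity/regeneration framework in which {\sc pasta} is invoked (in the regenerative setting of this paper, the time-average limit exists by the renewal–reward theorem), and the identity $M(t)/t\to0$ transports existence to the embedded sequence.

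I expect the main obstacle to be not the algebra of the compensator identity — which is routine once one recognizes that the stated hypotheses pin $N$ down as a Poisson process — but rather the two analytic passages: (i) the limit interchanges needed to go from dyadic step integrands to the left-continuous integrand $f(X(s-))$ while keeping every term integrable, and (ii) upgrading $M(n)/n\to0$ along the integers to $M(t)/t\to0$ along the continuum, which requires the maximal-inequality–plus–Borel–Cantelli step rather than the martingale strong law alone.
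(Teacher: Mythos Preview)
The paper does not prove this theorem: it is stated ``for convenience and completeness'' as the main result of \cite{Wolf1982} and is then used as a black box in the proof of Theorem~\ref{thm: D_f}. There is therefore no paper proof to compare against.

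As a self-contained sketch your proposal is essentially the Wolff martingale argument, and the core steps (identifying $N$ as Poisson, showing $M(t)$ is an $\mathbb{F}$-martingale via predictability of $f(X(s-))$, and the $M(t)/t\to 0$ step) are correct in spirit. Two points deserve attention. First, your predictability step presumes that $X$ is $\mathbb{F}$-adapted; the theorem as stated in the paper does not list this hypothesis, so you are silently adding an assumption (a defensible one --- the paper only applies the result in a setting where it holds --- but you should flag it). Second, the theorem as written asserts unconditional existence of all three limits, whereas your argument only delivers the \emph{equivalence}: $M(t)/t\to 0$ shows that the time-average and point-average converge or diverge together and agree when they converge. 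You acknowledge this by deferring existence to ``the ergodicity/regeneration framework in which {\sc pasta} is invoked,'' which is exactly how the paper uses the result (the regenerative ergodic theorem supplies existence of the time-average, and {\sc pasta} transports it), but it means your proof does not establish the theorem quite as stated --- nor does Wolff's original, so this is really a mild overstatement in the paper's formulation rather than a defect in your argument.
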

\color{black} In what follows, we apply this theorem with $X(\cdot)$ corresponding to a compound Poisson process with nonnegative jumps, and $N(\cdot)$ corresponding to the underlying Poisson process that defines the jumps epochs in $X(\cdot)$. Now we are ready to provide the proof of Theorem \ref{thm: D_f}.
\color{black}
\begin{proof}
    If $\lambda=0$, the result is trivial, so that it is left to provide a proof for the case $\lambda>0$ (possibly infinite). For any $i\in{\mathbb N}_0$, let $N_i(\cdot)$ be the arrival process corresponding to the jumps in $\Psi_i(\cdot)$, i.e., whenever $\lambda_i>0$, $N_i(\cdot)$ is a Poisson process with rate $\lambda_i$ and otherwise it equals zero $\mathbb{P}$-a.s. This entails that the superposition
    \begin{equation}\label{eq: N def}
    N(t)\equiv\sum_{i=0}^\infty N_i(t)=\sum_{i\in{\mathbb N}_0:\lambda_i>0}N_i(t)\ \ , \ \ t\geqslant0
    \end{equation}
   coincides with the arrival process of the jumps in $J(\cdot)$.

    Now, let $\{Y(t):t\geqslant0\}$ be a regenerative process with independent cycles that are distributed as $\{W(t):0\leqslant t<\tau\}$ such that $Y(t)=W(t)$ for every $0\leqslant t<\tau$ and note that $Y(\cdot)$ is c\`adl\`ag. 
    %consider an arbitrary constant $M>0$ and notice that $M\wedge f(\cdot)$ is a c\`agl\`ad function. Also, recall that $t\mapsto W(t-)$  is c\`agl\`ad. Thus, $t\mapsto\left(M\wedge f\right)\circ W(t-)$ is a c\`agl\`ad process, and 
    Then, as $\tau$ is positive and has a finite mean, the ergodic theorem for regenerative processes implies that
\begin{equation}
   \frac{1}{t}\int_0^tf\circ Y(s)\,{\rm d}s\xrightarrow{t\rightarrow\infty}\frac{\mathbb{E}A_f}{\mathbb{E}\tau}\ \ , \ \ \mathbb{P}\text{-a.s.}
\end{equation}
As a result, due to the {\sc pasta} property \cite{Wolf1982}, for any $i\in{\mathbb N}_0$ such that $\lambda_i>0$,
\begin{equation}
    \frac{1}{N_i(t)}\int_0^tf\circ Y(s-)\,{\rm d}N_i(s)\xrightarrow{t\rightarrow\infty}\frac{\mathbb{E}A_f}{\mathbb{E}\tau}\ \ , \ \ \mathbb{P}\text{-a.s.}
\end{equation}
In addition, for each $i\in{\mathbb N}_0$ such that $\lambda_i>0$, it is known that $N_i(t)/(\lambda_i t)\to 1$ as $t\to\infty$, $\mathbb{P}$-a.s. (by the law of large numbers for renewal processes), and hence for that case the renewal reward theorem leads to
\begin{align*}
    \frac{t}{N_i(t)}\cdot\frac{1}{t}\int_0^tf\circ Y(s-)\,{\rm d}N_i(s)&\xrightarrow{t\rightarrow\infty}\frac{1}{\lambda_i\,\mathbb{E}\tau}\mathbb{E}\int_0^\tau f\circ Y(s-)\,{\rm d}N_i(s)\\&=\nonumber\frac{1}{\lambda_i\,\mathbb{E}\tau}\mathbb{E}\int_0^\tau f\circ W(s-)\,{\rm d}N_i(s)\ \ , \ \ \mathbb{P}\text{-a.s.}
\end{align*}
Thus, because this limit is unique, we conclude that, for any $i\in{\mathbb N}_0$,
\begin{equation}\label{eq: D_g i}
\mathbb{E}\int_0^\tau f\circ W(t-)\,{\rm d}N_i(t)=\lambda_i\,\mathbb{E}A_f\,.
\end{equation}
Then, a summation of both sides of \eqref{eq: D_g i} over all indices $i\in{\mathbb N}$ such that $\lambda_i>0$ yields
\begin{equation}\label{eq: D_f bounded}
    \mathbb{E}D_f=\sum_{i=0}^\infty 
\mathbb{E}\int_0^\tau f\circ W(t-)\,{\rm d}N_i(t)=\lambda\,\mathbb{E}A_f\,.
\end{equation}
This completes the proof. 
\end{proof}
%\begin{remark}
 %   \normalfont Note that the summation in the definition of $D_f$ which appears in \eqref{eq: functional def} does not take into account the jumps of $W(\cdot)$ when it hits the origin (i.e., the secondary jumps). When considering a variation of this definition in which the initial jump of the cycle also appears in the calculation, one is to add $f(0)$ to $D_f$. Due to Theorem \ref{thm: D_f}, the  adjusted expectation is $f(0)+\lambda\,\mathbb{E}A_f$. For example, consider in the M/G/1 context (where $-\varrho=1-\rho>0$) the case $f\equiv1$ (such that $A_f$ is the length of the busy period and $D_f$ the number of clients arriving in the busy period). Then, using the known expressions for ${\mathbb E}\,D_f$ and ${\mathbb E}\,A_f$, we indeed arrive at
  %  \begin{equation}
   %     {\mathbb E}\,D_f=\frac{\rho}{1-\rho}
        %=\frac{1}{1-\rho}-1=\frac{\rho}{1-\rho}=
    %    =\lambda\cdot\frac{\mu_1}{1-\rho}
     %   =\lambda\,{\mathbb E} A_f,
    %\end{equation} 
    %as expected. 
%\end{remark}

\subsection{Polynomial functionals}
The first moment ${\mathbb E}A_f$, as identified in Theorem \ref{thm: basic}, can be made explicit when the functional $f(\cdot)$ is a polynomial. The expression for $f(\cdot)={\mathcal P}_k(\cdot)$ with $\mathcal{P}_k(y)\equiv y^k$ for $k\in {\mathbb Z}_+$ and  $y>0$, is presented in
the next corollary.  \color{black} Recall the objects $\varphi(\cdot)$ and $\psi(\cdot)$ that were defined in respectively \eqref{eq: varphi} and \eqref{psi}. \color{black} For simplicity of notation, for any $k\in \mathbb{Z}_+$, we define 
\begin{equation}
    \psi_k\equiv\psi^ {(k)}(0+)\equiv\left.\frac{{\rm d}^k\psi(\alpha)}{{\rm d}\alpha^k}\right|_{\alpha\downarrow0},\:\:\:\:\:\varphi_k\equiv\varphi^ {(k)}(0+)\equiv\left.\frac{{\rm d}^k\varphi(\alpha)}{{\rm d}\alpha^k}\right|_{\alpha\downarrow0}.
\end{equation}

\begin{corollary}\label{cor: polynomial}
For any $k\in\mathbb{Z}_+$, 
\begin{equation}
    \mathbb{E}A_{\mathcal{P}_k}=-\frac{1}{\varrho}\sum_{i=0}^k\binom{k}{i}\frac{\mu_{k-i+1}}{k-i+1}(-1)^i\psi_i\,.
\end{equation}
\end{corollary}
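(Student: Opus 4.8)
The plan is to apply Theorem~\ref{thm: basic} directly with $f(\cdot)=\mathcal{P}_k(\cdot)$, so that
\begin{equation*}
    \mathbb{E}A_{\mathcal{P}_k}=-\frac{1}{\varrho}\int_0^\infty x\,\mathbb{E}\left(\zeta+xU\right)^k{\rm d}G_V(x),
\end{equation*}
where $U$ is uniform on $[0,1]$, independent of $\zeta$, whose LST is $\psi(\cdot)$, and both are independent of $V$. First I would expand the $k$-th power using the binomial theorem, obtaining
\begin{equation*}
    \mathbb{E}\left(\zeta+xU\right)^k=\sum_{i=0}^k\binom{k}{i}x^{k-i}\,\mathbb{E}\zeta^{i}\,\mathbb{E}U^{k-i},
\end{equation*}
where the factorization of the expectation uses the independence of $\zeta$ and $U$.

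Next I would substitute the two elementary moment identities. For the uniform factor, $\mathbb{E}U^{k-i}=\tfrac{1}{k-i+1}$. For the $\zeta$-factor, I would identify $\mathbb{E}\zeta^{i}=(-1)^i\psi^{(i)}(0+)=(-1)^i\psi_i$, which is the standard relation between moments and derivatives of the LST at the origin (valid here because $\psi$ is analytic in a neighborhood of $0$, as $\varphi'(0+)=\varrho\neq 0$ makes $\psi(\alpha)=\alpha\varrho/\varphi(\alpha)$ smooth near $0$). This turns the integrand into
\begin{equation*}
    x\,\mathbb{E}\left(\zeta+xU\right)^k=\sum_{i=0}^k\binom{k}{i}\frac{(-1)^i\psi_i}{k-i+1}\,x^{k-i+1}.
\end{equation*}

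Finally I would integrate term by term against ${\rm d}G_V(x)$, using $\int_0^\infty x^{k-i+1}{\rm d}G_V(x)=\mu_{k-i+1}$, and pull out the constant $-1/\varrho$, which yields exactly the claimed formula. The interchange of the finite sum and the integral is immediate since there are only $k+1$ terms; one only needs $\mu_{k+1}<\infty$ for the expression to be meaningful, which is the implicit standing assumption when one speaks of $\mathbb{E}A_{\mathcal{P}_k}$. There is no real obstacle here — the only point requiring a word of justification is the moment–derivative identity $\mathbb{E}\zeta^i=(-1)^i\psi_i$ and the finiteness of these quantities, but since $\psi$ is a genuine LST that is smooth at $0$ under $\varrho<0$, this is routine; the corollary is essentially a bookkeeping consequence of Theorem~\ref{thm: basic}.
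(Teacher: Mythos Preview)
Your proof is correct and follows exactly the paper's approach: insert $f=\mathcal{P}_k$ into Theorem~\ref{thm: basic} and expand via the binomial theorem. One small caveat: your justification that $\psi$ is ``analytic/smooth near $0$'' is not needed and not generally true (finiteness of $\psi_i$ is precisely what Theorem~\ref{thm: Takacs} and Corollary~\ref{cor: finite condition} address), but the moment--derivative identity $\mathbb{E}\zeta^i=(-1)^i\psi_i$ holds for any LST as a right-derivative statement, so the argument goes through regardless.
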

\begin{proof}
    The proof follows immediately by inserting $f\equiv \mathcal{P}_k$ into the expression given in Theorem \ref{thm: basic}, in combination with a straightforward application of the binomial theorem. 
\end{proof}

From a practical standpoint it is clear that, in order to apply Corollary \ref{cor: polynomial}, it is necessary to know how to compute the first $k$ right-derivatives of $\psi(\cdot)$ at zero. Our model primitives, however, relate to right-derivatives of $\varphi(\cdot)$ at zero; recall our ambition to express the moments of interest in terms of the moments of $V$, the moments associated to $\nu(\cdot)$ and the parameters $c$ and $\sigma^2$. This means that we require a procedure to express the right-derivatives of $\psi(\cdot)$ in terms of those of $\varphi(\cdot)$.
One special case in which an explicit solution can be found is that of $J(\cdot)$ corresponding to a drifted Brownian motion. Then, $\psi(\cdot)$ is the LST of an exponential random variable with rate ${\mathfrak r}\equiv{\mathfrak r}(c,\sigma^2)\equiv -{2c}/{\sigma^2}>0$ (see, e.g., \cite[Example 3.1]{Debicki2015}). This means that the coefficients $(-1)^i\color{black}\psi_i$ are given by $i! \,{\mathfrak r}^{-i}$,  leading to
\begin{equation}\label{eq: Brownian functional}
    \mathbb{E}A_{\mathcal{P}_k}=-\frac{1}{c}\sum_{i=0}^k\frac{k!\,\mu_{k-i+1}}{(k-i)!\,(k-i+1)}{\mathfrak r}^{-i}\,.
\end{equation} 
When $J(\cdot)$ is a CPP minus a drift with rate one, Takacs' recurrence relation \cite[Theorem 5]{Takacs1962} could be applied in order to recursively compute the coefficients $\psi_i$, for $i\in\{1,\ldots,k\}$, in terms of the right-derivatives of $\varphi(\cdot)$ at $0$. The following result is a generalization of Takacs' recursion. To streamline the flow of the exposition, we first provide in Subsection~\ref{subsec: intuition} an intuitive explanation of this result and point out its inherent complications, after which we give a formal proof in Subsection~\ref{sec: proof Takacs}. 

\begin{theorem}\label{thm: Takacs}
\color{black} Let $\zeta$ be defined as in Theorem \ref{thm: basic}. \color{black} For any $i\in{\mathbb N}$, denote
\begin{equation}
    \eta_i\equiv\int_{(0,\infty)}y^ i\nu\left({\rm d}y\right).
    \end{equation}
If $\eta_k<\infty$, for $k\in{\mathbb N}$, then  \begin{equation}\label{eq: Takacs1}
            (-1)^k\mathbb{E}\zeta^k=\psi_k=-\frac{1}{(k+1)\varphi_1}\sum_{i=0}^{k-1}\binom{k+1}{i}\psi_i\varphi_{k+1-i}\,.
        \end{equation}
Otherwise, $\mathbb{E}\zeta^k=(-1)^k\psi_k=\infty$.
\end{theorem}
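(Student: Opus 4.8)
The plan is to extract the recursion \eqref{eq: Takacs1} directly from the defining relation $\psi(\alpha)\varphi(\alpha)=\alpha\varphi'(0+)=\alpha\varphi_1$ of Theorem \ref{thm: basic}, by expanding both sides as formal power series around $\alpha=0$ and matching coefficients. First I would record the Taylor coefficients of $\varphi(\cdot)$ near zero: from \eqref{eq: varphi} one has $\varphi(\alpha)=c\alpha-\tfrac{\sigma^2}{2}\alpha^2-\int_{(0,\infty)}(e^{-\alpha y}-1)\nu(\mathrm{d}y)$, so differentiating under the integral (justified when $\eta_k<\infty$, since then $y^j$ is $\nu$-integrable on, say, $(0,1]^c$ for $j\le k$ and $(1\wedge y)$-integrability handles the rest) gives $\varphi_1=c+\eta_1=\varrho$ and, for $j\ge 2$, $\varphi_j=(-1)^{j-1}\eta_j$ except that the $j=2$ term also picks up the Brownian contribution $-\sigma^2$, i.e.\ $\varphi_2=-\sigma^2-\eta_2$. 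The precise bookkeeping of these coefficients is routine; the key point is that $\varphi_j$ is finite for $2\le j\le k+1$ exactly when $\eta_{k+1}<\infty$, and more importantly $\varphi_j$ is finite for $2\le j\le k$ when $\eta_k<\infty$, with the top coefficient $\varphi_{k+1}$ possibly infinite.

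Next I would write $\psi(\alpha)=\sum_{i\ge 0}\tfrac{\psi_i}{i!}\alpha^i$ and $\varphi(\alpha)=\sum_{j\ge 1}\tfrac{\varphi_j}{j!}\alpha^j$ and substitute into $\psi(\alpha)\varphi(\alpha)=\varphi_1\alpha$. The coefficient of $\alpha^{n}$ on the left is $\sum_{i=0}^{n-1}\tfrac{\psi_i}{i!}\tfrac{\varphi_{n-i}}{(n-i)!}$, while on the right it is $\varphi_1$ for $n=1$ and $0$ for $n\ge 2$. Taking $n=k+1$ and isolating the $i=k$ term gives $\tfrac{\psi_k}{k!}\tfrac{\varphi_1}{1!}=-\sum_{i=0}^{k-1}\tfrac{\psi_i}{i!}\tfrac{\varphi_{k+1-i}}{(k+1-i)!}$; multiplying through by $k!$ and rewriting the binomial coefficients $\tfrac{k!}{i!\,(k+1-i)!}=\tfrac{1}{k+1}\binom{k+1}{i}$ yields exactly \eqref{eq: Takacs1}, with $\psi_k=(-1)^k\mathbb{E}\zeta^k$ by definition of $\zeta$ in Theorem \ref{thm: basic}. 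For the finiteness statement I would argue by induction on $k$: if $\eta_k<\infty$ then all of $\varphi_2,\dots,\varphi_{k}$ are finite and, by the induction hypothesis, $\psi_0,\dots,\psi_{k-1}$ are finite; the only term in the sum involving $\varphi_{k+1}$ is the $i=0$ term $\binom{k+1}{0}\psi_0\varphi_{k+1}=\varphi_{k+1}$, which \emph{is} finite because $\eta_k<\infty$ does not imply $\eta_{k+1}<\infty$ — wait, here one must be careful: $\varphi_{k+1}$ requires $\eta_{k+1}<\infty$.

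Resolving that last point is where the real care is needed, and it is the main obstacle. The cleanest route is not to expand to order $k+1$ but to work with a finite Taylor expansion with integral remainder, or equivalently to differentiate the identity $\psi(\alpha)\varphi(\alpha)=\varphi_1\alpha$ exactly $k+1$ times using the general Leibniz rule and then let $\alpha\downarrow 0$. Writing $\tfrac{\mathrm{d}^{k+1}}{\mathrm{d}\alpha^{k+1}}[\psi\varphi]=\sum_{i=0}^{k+1}\binom{k+1}{i}\psi^{(i)}\varphi^{(k+1-i)}$ and evaluating at $\alpha\downarrow 0$: the $i=k+1$ term is $\psi^{(k+1)}(0+)\varphi(0+)=0$ since $\varphi(0)=0$, the $i=k$ term is $(k+1)\psi_k\varphi_1$, and the remaining terms $i=0,\dots,k-1$ give $\binom{k+1}{i}\psi_i\varphi_{k+1-i}$ with indices $k+1-i$ running over $2,\dots,k+1$. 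Here $\varphi_{k+1}$ appears only through the $i=0$ term $\varphi_{k+1}=\varphi^{(k+1)}(0+)$. If $\eta_{k+1}=\infty$ this derivative is $+\infty$ (all higher derivatives of $-\int(e^{-\alpha y}-1)\nu(\mathrm dy)$ at $0+$ are monotone limits of $\pm\int y^{k+1}e^{-\alpha y}\nu(\mathrm dy)$), forcing $\psi_k$ — hence $\mathbb E\zeta^k$ — to be infinite, which is consistent with the "otherwise" clause only if $\eta_k=\infty$; so one must separately establish that $\eta_k<\infty$ already forces $\mathbb E\zeta^k<\infty$. That implication is exactly the substance of the theorem and I would obtain it from the recursion itself run in the other direction: assuming inductively $\psi_0,\dots,\psi_{k-1}<\infty$ and $\eta_2,\dots,\eta_k<\infty$ (hence $\varphi_2,\dots,\varphi_k<\infty$), the identity obtained at order $k$ (not $k+1$) reads $(k)\psi_{k-1}\varphi_1=-\sum_{i=0}^{k-2}\binom{k}{i}\psi_i\varphi_{k-i}$ and more usefully one re-derives $\psi_k$ from the order-$(k+1)$ identity where every term on the right except possibly the $\varphi_{k+1}$ one is finite; the point is that when $\eta_k<\infty$ one can bound $\mathbb E\zeta^k$ directly via $\mathbb{E}\zeta^k=\lim_{\alpha\downarrow0}(-1)^k\psi^{(k)}(\alpha)$ using that $\psi=\varphi_1\alpha/\varphi(\alpha)$ is, on $(0,\infty)$, a ratio of a linear function by a function whose first $k$ derivatives at $0$ are finite and whose value $\varphi(\alpha)/\alpha\to\varphi_1\neq 0$, so $\psi$ is $C^k$ at $0+$ with finite derivatives — a standard quotient-rule / Faà di Bruno argument. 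Once this finiteness is in hand the recursion \eqref{eq: Takacs1} is just the order-$(k+1)$ Leibniz identity rearranged, and the converse "otherwise" clause follows because if $\eta_k=\infty$ then $\varphi_k=\pm\infty$ appears in the order-$k$ identity and propagates to make $\psi_{k}$, $\psi_{k+1},\dots$ infinite as well. I would present the argument via the Leibniz-rule differentiation of $\psi\varphi=\varphi_1\alpha$ rather than formal series, precisely so that the finiteness issues are handled transparently term by term.
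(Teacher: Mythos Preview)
Your Leibniz-rule differentiation of $\psi(\alpha)\varphi(\alpha)=\varphi_1\alpha$ is exactly the paper's starting point (its Lemma~\ref{lemma: bounded case}), and you correctly identify the crux: in the order-$(k{+}1)$ identity the term $\psi^{(k+1)}(\alpha)\varphi(\alpha)$ is a potential $0\cdot\infty$ indeterminacy as $\alpha\downarrow 0$, so one cannot simply discard it.

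Your proposed resolution, however, contains a genuine error. You assert that ``$\eta_k<\infty$ already forces $\mathbb E\zeta^k<\infty$'' via a ``standard quotient-rule / Fa\`a di Bruno argument'' showing $\psi$ is $C^k$ at $0+$. This is off by one: if $\varphi$ is $C^k$ at $0+$ with $\varphi(0)=0$, then $h(\alpha)\equiv\varphi(\alpha)/\alpha=\int_0^1\varphi'(t\alpha)\,{\rm d}t$ is only $C^{k-1}$ at $0+$, and hence $\psi=\varphi_1/h$ is only $C^{k-1}$. More to the point, the implication you are trying to prove is \emph{false}: the paper's Corollary~\ref{cor: finite condition}, which is a consequence of the theorem, states that $\mathbb E\zeta^k<\infty$ iff $\eta_{k+1}<\infty$, not $\eta_k<\infty$. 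When $\eta_k<\infty$ but $\eta_{k+1}=\infty$, the recursion \eqref{eq: Takacs1} is meant as an identity in the extended reals, both sides being $(-1)^k\cdot\infty$ (the $i=0$ term on the right contributes $\varphi_{k+1}=\pm\infty$ while all other terms are finite). So your induction collapses at precisely the step you flag as ``where the real care is needed.''

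The paper's resolution --- singled out explicitly as the main obstacle in its informal discussion (Subsection~\ref{subsec: intuition}) --- is not analytic but probabilistic: it truncates the jumps of $J(\cdot)$ at level $m$, producing a process $J_m(\cdot)$ for which \emph{all} $\psi_j(m)$ are finite (Lemma~\ref{lemma: psi finite}, itself a nontrivial result proved in the appendix), applies the Leibniz argument in the truncated model, and then lets $m\uparrow\infty$ via monotone convergence (Lemmas~\ref{lemma: psi convergence}--\ref{lemma: phi convergence}), allowing both sides to pass to the limit even when that limit is infinite. Without this truncation device or a genuine substitute (for instance, a dominated-convergence argument showing $\alpha\,\mathbb E[\zeta^{k+1}e^{-\alpha\zeta}]\to 0$ under the \emph{correct} hypothesis $\mathbb E\zeta^k<\infty$, together with a separate treatment of the case $\eta_k<\infty$, $\eta_{k+1}=\infty$), your dismissal of the $i=k{+}1$ term is unjustified.
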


\begin{remark}\normalfont
    Notice that \eqref{eq: Takacs1} can be equivalently written as
\begin{align}\label{eq: Takacs expectation}
\mathbb{E}\zeta^k&=(-1)^k\psi_k=-\frac{1}{(k+1)\varphi_1}\sum_{i=0}^{k-1}\binom{k+1}{i}(-1)^i\psi_i(-1)^{k-i}\varphi_{k+1-i}\\&=\nonumber -\frac{1}{\varphi_1}\sum_{i=0}^{k-1}\binom{k}{i}\frac{(-1)^{k-i}\varphi_{k+1-i}}{k+1-i}\mathbb{E}\zeta^ i\,
\end{align}
from which it directly follows that Theorem \ref{thm: Takacs} is a generalization of \cite[Theorem 5]{Takacs1962}.
\end{remark}
%For what follows, notice that from the standard inclusion of ${\mathbb L}^p(\nu)$ spaces it follows that for every $i\in{\mathbb N}$ we have the two implications \textcolor{red}{((47) is true when $\nu$ is a finite measure which is not necessarily the case in our model. Thus, my suggestion is to delete (47).}
%\begin{equation}\label{inclu}
    %\eta_i=\infty\Rightarrow\eta_{i+1}=\infty\ \ \:\mbox{and}\:\: \ \ \eta_{i+1}<\infty\Rightarrow\eta_i<\infty\,.
%\end{equation}
%We also% 
Recall that
\begin{align}\label{eq: phi-eta}
    &\varphi_0=0\,,\:\:\:\:\:\varphi_1=\varrho=c+\eta_1\,,\:\:\:\:\:\varphi_2=-\left(\sigma^2+\eta_2\right) \,,\:\:\:\:\:
    \varphi_i=(-1)^{i+1}\eta_i\ \ , \ \ i\in\{3,4,\ldots\}\,.
    \end{align}
In particular, for any $i\in{\mathbb N}$, this implies that $\eta_i<\infty$ if and only if $\varphi_i<\infty$. As a result, the next corollary follows. %\textcolor{red}{(To the best of my understanding (please check me!), Corollary 2 follows only from Theorem 3 with (48), i.e., (47) is unrelated to this corollary. Hence I do not see any problem to remain it as it is now...)}.

\begin{corollary}\label{cor: finite condition}
    For any $k\in{\mathbb N}$, $\mathbb{E}\zeta^ k=(-1)^ k\psi_k<\infty$ if and only if $\eta_{k+1}<\infty$. 
\end{corollary}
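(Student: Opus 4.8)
The plan is to combine Theorem \ref{thm: Takacs} with the relations \eqref{eq: phi-eta} that tie the right-derivatives $\varphi_i$ to the moments $\eta_i$ of the L\'evy measure. The key observation recorded in \eqref{eq: phi-eta} is that, for every $i \in \{2,3,\ldots\}$, one has $\varphi_i = \pm(\sigma^2 + \eta_2)$ or $\varphi_i = \pm \eta_i$ up to sign, so that $|\varphi_i| < \infty$ if and only if $\eta_i < \infty$; moreover $\varphi_1 = \varrho = c + \eta_1$ is finite and nonzero by our standing assumption $\varrho < 0$ and $\eta_1 \leqslant |c| + |\varrho| < \infty$ whenever $\varrho$ is finite (note $\eta_1$ need not be finite in full generality, but $\varrho < 0$ finite forces $\eta_1 < \infty$, since $\eta_1 = \varrho - c$). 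Hence the finiteness of the model primitive $\eta_{k+1}$ is equivalent to the finiteness of $\varphi_{k+1}$.

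First I would prove the implication ``$\eta_{k+1} < \infty \Rightarrow \mathbb{E}\zeta^k < \infty$'' by induction on $k$. The base case $k = 1$: if $\eta_2 < \infty$ then $\varphi_2$ is finite, and \eqref{eq: Takacs1} with $k = 1$ gives $\psi_1 = -\varphi_2/(2\varphi_1)$, a finite quantity since $\varphi_1 \neq 0$. For the inductive step, suppose $\eta_{k+1} < \infty$. Then $\eta_j < \infty$ for all $j \leqslant k+1$ (moments of a measure on $(0,\infty)$ are monotone in the sense that $\int (1 \wedge y)\,\nu(\mathrm{d}y) < \infty$ together with $\eta_{k+1} < \infty$ forces $\eta_j < \infty$ for $2 \leqslant j \leqslant k+1$, because $y^j \leqslant y^2 + y^{k+1}$ on $(0,\infty)$ for $2 \leqslant j \leqslant k+1$). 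By \eqref{eq: phi-eta} this makes $\varphi_2, \ldots, \varphi_{k+1}$ all finite, and by the induction hypothesis $\psi_0, \psi_1, \ldots, \psi_{k-1}$ are all finite. The right-hand side of \eqref{eq: Takacs1} is then a finite sum of products of finite quantities divided by the nonzero constant $(k+1)\varphi_1$, so $\psi_k$ is finite, i.e.\ $\mathbb{E}\zeta^k = (-1)^k \psi_k < \infty$.

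For the converse, ``$\eta_{k+1} = \infty \Rightarrow \mathbb{E}\zeta^k = \infty$'', I would invoke the second clause of Theorem \ref{thm: Takacs}, which states precisely that if $\eta_k = \infty$ then $\mathbb{E}\zeta^k = \infty$; but one must be careful that the indices match. The cleanest route is: if $\eta_{k+1} = \infty$, apply the second clause of Theorem \ref{thm: Takacs} \emph{at level $k+1$} to get $\mathbb{E}\zeta^{k+1} = \infty$, and then observe that $\mathbb{E}\zeta^{k+1} = \infty$ together with $\mathbb{E}\zeta^k < \infty$ is impossible --- indeed if $\mathbb{E}\zeta^k < \infty$ held, then by the forward implication just proved (run at level $k$, which only needs $\eta_{k+1} < \infty$ --- wait, that is exactly what fails) one cannot directly conclude. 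Instead I would argue directly from Theorem \ref{thm: Takacs}: its hypothesis for the recursion \eqref{eq: Takacs1} to hold at level $k$ is $\eta_k < \infty$, and when $\eta_k = \infty$ the statement explicitly asserts $\mathbb{E}\zeta^k = \infty$. So the remaining gap is only the index shift, which is resolved by noting that $\eta_{k+1} = \infty$ implies $\eta_{k+1} = \infty$, hence by Theorem \ref{thm: Takacs} applied with ``$k$'' replaced by ``$k+1$'' we get $\mathbb{E}\zeta^{k+1} = \infty$; and since $\mathbb{E}\zeta^k \geqslant 1 \cdot \mathbb{P}(\zeta \geqslant 1) + (\text{bounded on } [0,1])$ does not control $\mathbb{E}\zeta^{k+1}$, I instead use Jensen / power-mean: $\mathbb{E}\zeta^{k+1}$ finite would follow from $\mathbb{E}\zeta^{k+2}$ being the relevant bound — this is getting circular, so the honest statement is that the converse is \emph{immediate} from the ``Otherwise'' clause of Theorem \ref{thm: Takacs} once one notes, via \eqref{eq: phi-eta}, that $\eta_{k+1} < \infty \Leftrightarrow \varphi_{k+1} < \infty$, and that the recursion at level $k$ needs $\varphi_{k+1}$, not $\varphi_k$.

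The main obstacle, and the point that needs care, is exactly this index bookkeeping: the recursion \eqref{eq: Takacs1} computing $\psi_k$ involves $\varphi_{k+1}$ (the highest-index term is $\varphi_{k+1-i}$ at $i = 0$), so $\mathbb{E}\zeta^k$ finite is governed by $\eta_{k+1}$, not $\eta_k$ --- this is why the corollary reads $\eta_{k+1}$. The clean write-up therefore is: by \eqref{eq: phi-eta}, $\eta_{k+1} < \infty$ iff $\varphi_{k+1} < \infty$; if $\eta_{k+1} < \infty$ then all of $\varphi_1, \ldots, \varphi_{k+1}$ are finite (monotonicity of moments as above) and an induction on \eqref{eq: Takacs1} shows $\psi_0, \ldots, \psi_k$ are all finite, giving $\mathbb{E}\zeta^k < \infty$; conversely, if $\eta_{k+1} = \infty$ then Theorem \ref{thm: Takacs} --- read with its index equal to $k+1$ --- directly yields $\mathbb{E}\zeta^{k+1} = \infty$, and since $\zeta \geqslant 0$ implies $\mathbb{E}\zeta^{k+1} \leqslant (\mathbb{E}\zeta^k)^{1/k}\,(\text{something})$... no: the correct elementary fact is that for a nonnegative random variable, $\mathbb{E}\zeta^{k+1} < \infty \Rightarrow \mathbb{E}\zeta^k < \infty$, hence contrapositively $\mathbb{E}\zeta^k = \infty \Rightarrow \mathbb{E}\zeta^{k+1} = \infty$ --- this is the wrong direction. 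So ultimately the converse must be taken \emph{verbatim} from the ``Otherwise'' clause of Theorem \ref{thm: Takacs}, whose hypothesis ``$\eta_k = \infty$'' I apply with $k \rightsquigarrow k+1$: but then I conclude $\mathbb{E}\zeta^{k+1} = \infty$, not $\mathbb{E}\zeta^k = \infty$. The resolution: Theorem \ref{thm: Takacs}'s ``Otherwise'' clause should be understood as ``if $\eta_k = \infty$ then $\mathbb{E}\zeta^{k-1}$ already fails'', i.e.\ the natural reading consistent with the recursion is that $\mathbb{E}\zeta^k < \infty$ requires $\eta_{k+1} < \infty$; I would phrase the proof as ``the `if and only if' is simply the combination of the two clauses of Theorem \ref{thm: Takacs} together with the equivalence $\eta_{k+1} < \infty \Leftrightarrow \varphi_{k+1} < \infty$ from \eqref{eq: phi-eta},'' and flag to myself to double-check the index convention in Theorem \ref{thm: Takacs} against its proof in Subsection \ref{sec: proof Takacs}.
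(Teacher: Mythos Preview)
Your forward direction --- induction on $k$ using \eqref{eq: Takacs1} together with the monotonicity of the $\eta_j$ and the equivalence $\eta_i<\infty \Leftrightarrow \varphi_i<\infty$ from \eqref{eq: phi-eta} --- is correct and is precisely what the paper has in mind.

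The converse, however, has a genuine gap that you circle around without closing. You repeatedly try to invoke the ``Otherwise'' clause of Theorem~\ref{thm: Takacs} at level $k+1$ to obtain $\mathbb{E}\zeta^{k+1}=\infty$, and then correctly note that this does not force $\mathbb{E}\zeta^k=\infty$. The missing step is a subcase split. If $\eta_{k+1}=\infty$ and also $\eta_k=\infty$, then the ``Otherwise'' clause of Theorem~\ref{thm: Takacs} \emph{at level $k$} gives $\mathbb{E}\zeta^k=\infty$ directly. If instead $\eta_{k+1}=\infty$ but $\eta_k<\infty$, then the \emph{first} clause of Theorem~\ref{thm: Takacs} applies at level $k$: the recursion \eqref{eq: Takacs1} is valid, and on its right-hand side the $i=0$ term is $\psi_0\,\varphi_{k+1}=\varphi_{k+1}$, which is infinite by \eqref{eq: phi-eta}; every other term involves only $\psi_0,\ldots,\psi_{k-1}$ (finite by your forward induction, since $\eta_k<\infty$) and $\varphi_2,\ldots,\varphi_k$ (finite since $\eta_k<\infty$), hence is finite. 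A finite sum with one infinite summand is infinite, so $(-1)^k\psi_k=\mathbb{E}\zeta^k=\infty$.

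You essentially identify this mechanism when you write that ``the recursion \eqref{eq: Takacs1} computing $\psi_k$ involves $\varphi_{k+1}$'', but you never deploy it to finish the argument. The paper's own justification of the corollary is a one-line remark pointing to \eqref{eq: phi-eta} and Theorem~\ref{thm: Takacs}; the actual content --- isolating a single infinite term in the recursion --- is already carried out in the proof of the ``Otherwise'' clause in Subsection~\ref{sec: proof Takacs} (the $k_0$ argument there is exactly the subcase $\eta_{k_0}<\infty$, $\eta_{k_0+1}=\infty$).
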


\subsubsection{Takacs' recursion: informal derivation}\label{subsec: intuition}
Before providing the formal proof of Theorem \ref{thm: Takacs}, we first present a more intuitive treatment. The starting point is the following evident identity: by $\eqref{psi}$, for any $\alpha>0$,
\begin{equation}\label{eq: master takacs}
        \psi(\alpha)\,\varphi(\alpha)=\alpha\varphi_1\,.
    \end{equation}
    Differentiating both sides of \eqref{eq: master takacs} once leads to $\psi^{(1)}(\alpha)\,\varphi(\alpha)+\varphi^{(1)}(\alpha)\,\psi(\alpha)=\varphi_1$, while for $k\in \{2,3,\ldots\}$ we obtain, by the Leibniz rule, for any $\alpha>0$,
    \begin{equation}\label{eq: Leibniz1}
        \sum_{i=0}^{k}\binom{k}{i}\psi^{(i)}(\alpha)\varphi^{(k-i)}(\alpha)=0\,.
    \end{equation}
    As $\varphi(\cdot)$ is either increasing or decreasing in a right neighborhood of zero, it is possible to isolate $\psi^{(k)}(\alpha)$ in \eqref{eq: Leibniz1}, i.e., for every $\alpha$ in a right neighborhood of zero we have
    \begin{equation}\label{eq: psi_k}
\psi^{(k)}(\alpha)=-\frac{1}{\varphi(\alpha)}\sum_{i=0}^{k-1}\binom{k}{i}\psi^{(i)}(\alpha)\varphi^{(k-i)}(\alpha)\,.
\end{equation}
Now, $\varphi(\alpha)$ tends to zero as $\alpha\downarrow0$. Thus, if the sum in \eqref{eq: psi_k} also tends to zero, we would want to take the limit of $\psi^{(k)}(\alpha)$ as $\alpha\downarrow0$ according to l'Hopital's rule from which the recursive scheme \eqref{eq: Takacs1} follows. 

When trying to rigorize this argument, the difficulty is that is not clear how to justify  the convergence of the sum in \eqref{eq: psi_k} towards zero as $\alpha\downarrow0$. In order to resolve this issue, we rely on the following construction. For any $i\in{\mathbb N}_0$ and $m\in{\mathbb N}$, define $\Psi_{i,m}(\cdot)$ as a CPP {(with nonnegative jumps)} such that $\Psi_{i,m}(0)=\Psi_i(0)$ and  $\Delta\Psi_{i,m}(t)=m\wedge \Delta\Psi_{i}(t)$ for all $t\geqslant0$. Note that the jumps of $\Psi_{i,m}(\cdot)$ occur at the same times as in $\Psi_i(\cdot)$, and that  the jumps sizes in $\Psi_{i,m}(\cdot)$ are truncated values of the corresponding jumps sizes in $\Psi_i(\cdot)$. Then, it is possible to define a sequence (in $m$) of   spectrally positive \color{black}  L\'evy processes
\begin{equation}
    J_m(t)\equiv -ct+B_\sigma(t)+\overline{\Psi}_m(t)\ \ , \ \ t\geqslant0\,,
\end{equation}
where $\overline{\Psi}_m(\cdot)\equiv\sum_{i=1}^m\Psi_{i,m}(\cdot)$. Importantly, for each $t\geqslant0$, $J_m(\cdot)\uparrow J(\cdot)$ as $m\uparrow\infty$. Furthermore, for each $m\in{\mathbb N}$, the analogues of $\psi^{(k)}(0+)$ in the model with $J_m(\cdot)$ replacing $J(\cdot)$ are finite for every $k\in{\mathbb N}$. Thus, we first prove \eqref{eq: Takacs1} under the assumption that $J(\cdot)\equiv J_m(\cdot)$, after which we let $m\uparrow\infty$. Appealing to the appropriate convergence theorems then completes the proof for the general case. In Subsection~\ref{sec: proof Takacs} we make these steps formal. 
     
\subsubsection{Takacs' recursion: proof}\label{sec: proof Takacs}
The proof of Theorem \ref{thm: Takacs} consists of several steps,  which are described in the next lemmas.
We start by providing sufficient conditions under which \eqref{eq: Takacs1} is valid.
\begin{lemma}\label{lemma: bounded case}
    If for $k\in\mathbb{N}$ we have that $\mathbb{E}\zeta^{k+1}<\infty$, then \eqref{eq: Takacs1} is valid.  
\end{lemma}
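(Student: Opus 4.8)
\textbf{Plan for the proof of Lemma \ref{lemma: bounded case}.}
The plan is to turn the formal manipulations of Subsection~\ref{subsec: intuition} into a rigorous argument under the hypothesis $\mathbb{E}\zeta^{k+1}<\infty$. By Corollary~\ref{cor: finite condition} this hypothesis is equivalent to $\eta_{k+2}<\infty$, which by \eqref{eq: phi-eta} guarantees that $\varphi(\cdot)$ is $(k+1)$-times continuously differentiable on $[0,\infty)$ (with finite right-derivatives $\varphi_0,\dots,\varphi_{k+1}$ at the origin), and that $\psi(\cdot)=\alpha\varphi_1/\varphi(\alpha)$ is $k$-times continuously differentiable up to and including $\alpha\downarrow 0$, with $\psi_j=(-1)^j\mathbb{E}\zeta^j$ finite for $j\in\{0,1,\dots,k\}$. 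The first step is to record these regularity facts carefully, noting that $\varphi(\alpha)>0$ for $\alpha$ in a punctured right neighbourhood of $0$ (since $\varphi_1=\varrho<0$ means $\varphi$ is strictly decreasing near $0$ — actually one should be a touch careful about signs here: $\varphi(\alpha)\to 0^-$ as $\alpha\downarrow 0$, so $\varphi(\alpha)\ne 0$ near $0$ and division is legitimate), so that \eqref{eq: master takacs}, i.e. $\psi(\alpha)\varphi(\alpha)=\alpha\varphi_1$, holds as an identity of $C^k$ functions on a right neighbourhood of the origin.

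The second step is to differentiate \eqref{eq: master takacs} exactly $k$ times using the Leibniz rule. For $k\geqslant 2$ this yields \eqref{eq: Leibniz1}, namely $\sum_{i=0}^{k}\binom{k}{i}\psi^{(i)}(\alpha)\varphi^{(k-i)}(\alpha)=0$ for $\alpha>0$ small (the $k=1$ case is handled separately and directly, as in the text). Now I would \emph{not} isolate $\psi^{(k)}(\alpha)$ and take a limit; instead, differentiate \eqref{eq: master takacs} one more time — this is where the hypothesis $\mathbb{E}\zeta^{k+1}<\infty$, equivalently $\varphi_{k+1}$ and $\varphi_{k+2}$ finite (the latter not even needed), is used: it legitimises writing the $(k)$-th order Leibniz identity and then evaluating each term at $\alpha\downarrow 0$ by continuity. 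Since $\varphi(\alpha)\to\varphi_0=0$, the $i=k$ term $\psi^{(k)}(\alpha)\varphi(\alpha)$ vanishes in the limit. Wait — but then to get $\psi_k$ itself into the equation one must keep one more derivative. So the cleaner route is: apply the Leibniz rule to \eqref{eq: master takacs} with order $k+1$,
\begin{equation*}
\sum_{i=0}^{k+1}\binom{k+1}{i}\psi^{(i)}(\alpha)\varphi^{(k+1-i)}(\alpha)=0,
\end{equation*}
valid for small $\alpha>0$ provided all the derivatives appearing here are finite and continuous at $0$; the term of highest order in $\psi$ is $i=k+1$, carrying the factor $\varphi^{(0)}(\alpha)=\varphi(\alpha)\to 0$, so that term drops, while the $i=k$ term contributes $\binom{k+1}{k}\psi^{(k)}(\alpha)\varphi^{(1)}(\alpha)\to (k+1)\psi_k\varphi_1$. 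Taking $\alpha\downarrow 0$ and using continuity of each $\psi^{(i)}$, $i\leqslant k$, and each $\varphi^{(j)}$, $j\leqslant k+1$, at the origin gives
\begin{equation*}
(k+1)\psi_k\varphi_1+\sum_{i=0}^{k-1}\binom{k+1}{i}\psi_i\varphi_{k+1-i}=0,
\end{equation*}
which rearranges to \eqref{eq: Takacs1}. The translation to the $\mathbb{E}\zeta^k$ form is then the elementary identity in the Remark following the theorem.

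\textbf{Where the work is.} The conceptual content is light once one has the regularity, so the main obstacle is exactly establishing that $\psi$ is genuinely $C^k$ (one-sidedly) at $\alpha=0$ and that $\psi^{(j)}(\alpha)\to\psi_j=(-1)^j\mathbb{E}\zeta^j$ as $\alpha\downarrow 0$ for $j\leqslant k$. One clean way: the hypothesis $\mathbb{E}\zeta^{k+1}<\infty$ means $\zeta$ has $k+1$ finite moments, so its LST $\psi(\alpha)=\mathbb{E}e^{-\alpha\zeta}$ is $C^{k+1}$ on $[0,\infty)$ by dominated differentiation under the expectation — indeed $|\zeta^{j}e^{-\alpha\zeta}|\leqslant \zeta^{j}$ which is integrable for $j\leqslant k+1$ — and $\psi^{(j)}(\alpha)=(-1)^j\mathbb{E}\zeta^j e^{-\alpha\zeta}\to(-1)^j\mathbb{E}\zeta^j$ as $\alpha\downarrow 0$ by dominated convergence. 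This also secures continuity of $\psi^{(j)}$ at $0$ for all $j\leqslant k+1$, more than enough. Dually, $\varphi$ being $C^{k+1}$ at $0$ with the stated derivatives follows from $\eta_{k+1}<\infty$ (which is implied) by differentiating \eqref{eq: varphi} under the integral sign, using $\int_{(0,\infty)}y^{j}e^{-\alpha y}\nu({\rm d}y)\leqslant \eta_j<\infty$ for $1\leqslant j\leqslant k+1$ near $\alpha=0$, together with $\int_{(0,1]}y\,\nu({\rm d}y)<\infty$ to control the behaviour near the origin of the Lévy measure. With both $\psi$ and $\varphi$ of class $C^{k+1}$ near $0$, the Leibniz expansion of order $k+1$ of the product identity \eqref{eq: master takacs} is valid termwise, each term has a limit as $\alpha\downarrow 0$, and the computation above goes through; no l'Hôpital argument — and hence no delicate justification of a vanishing sum — is actually needed in this ``bounded'' case, which is precisely why this lemma is isolated as the easy building block before the truncation/monotone-limit argument handles the general statement.
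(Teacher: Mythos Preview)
Your approach is essentially the paper's: differentiate the identity $\psi(\alpha)\varphi(\alpha)=\alpha\varphi_1$ exactly $k+1$ times via Leibniz, then let $\alpha\downarrow 0$ using that the $i=k+1$ term $\psi^{(k+1)}(\alpha)\varphi(\alpha)$ vanishes (the paper phrases this as isolating $\psi^{(k+1)}$ and noting that its finiteness together with $\varphi(\alpha)\to 0$ forces the remaining sum to vanish --- the same observation). The justification that $\psi\in C^{k+1}[0,\infty)$ via dominated convergence from $\mathbb{E}\zeta^{k+1}<\infty$ is clean and slightly more explicit than the paper.

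There is, however, a genuine logical slip: you invoke Corollary~\ref{cor: finite condition} to deduce $\eta_{k+2}<\infty$ (and hence $\eta_{k+1}<\infty$) from the hypothesis, and then use this to obtain the $C^{k+1}$-regularity of $\varphi$ at the origin. But Corollary~\ref{cor: finite condition} is a consequence of Theorem~\ref{thm: Takacs}, whose proof rests on the very lemma you are proving --- so this is circular. The fix is immediate and already implicit in what you wrote: since $\psi\in C^{k+1}[0,\infty)$ with $\psi(0)=1\neq 0$, the identity $\varphi(\alpha)=\alpha\varphi_1/\psi(\alpha)$ shows $\varphi\in C^{k+1}[0,\infty)$ directly, with no appeal to $\eta_j$ at all. (Alternatively, one can follow the paper and avoid asserting regularity of $\varphi$ at $0$ a priori: work on $\alpha>0$ where everything is smooth, isolate $\psi^{(k+1)}(\alpha)$, and deduce that the remaining finite sum must tend to zero; the termwise limits then exist because the $i=k$ term is finite and the terms with $i\leqslant k-1$ all carry the same sign $(-1)^k$.) Once you replace the Corollary~\ref{cor: finite condition} reference by either of these observations, your argument is correct and coincides with the paper's.
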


\begin{proof}
    Differentiating both sides of \eqref{eq: master takacs} $(k+1)$ times with respect to $\alpha$ yields, according to the general Leibniz rule,
    \begin{equation}\label{eq: Leibniz}
        \sum_{i=0}^{k+1}\binom{k+1}{i}\psi^{(i)}(\alpha)\varphi^{(k+1-i)}(\alpha)=0\,,
    \end{equation}
    for every $\alpha>0$.
    As $\varphi(\cdot)$ is either increasing or decreasing in a right neighborhood of zero, we can isolate $\psi^{(k+1)}(\alpha)$ in \eqref{eq: Leibniz}, i.e., for any $\alpha$ in a right neighborhood of zero,
    \begin{equation}\label{eq: psi_k+1}
(-1)^{k+1}\psi^{(k+1)}(\alpha)=-\frac{(-1)^{k+1}}{\varphi(\alpha)}\sum_{i=0}^{k}\binom{k+1}{i}\psi^{(i)}(\alpha)\,\varphi^{(k+1-i)}(\alpha)\,.
\end{equation}
It is given that $\mathbb{E}\zeta^{k+1}<\infty$ and recall that $\varphi(\alpha)\rightarrow0$ as $\alpha\downarrow0$. Therefore, it must be that the sum in \eqref{eq: psi_k+1} vanishes as $\alpha\downarrow0$, i.e., 
\begin{equation}\label{eq: reorder}
    \sum_{i=0}^{k}\binom{k+1}{i}\psi_i\,\varphi_{k+1-i}=0\,.
\end{equation}
Finally, the result follows by isolating $\psi_k$ in \eqref{eq: reorder}.
\end{proof}
The next lemma, proved in the appendix, is needed to show that the sufficient condition stated in Lemma \ref{lemma: bounded case} is satisfied under the assumption that $J(\cdot)\equiv J_m(\cdot)$. For what follows, we need to define $\widetilde W_m(\cdot) \equiv V\wedge m+J_m(\cdot)$  with $V$ having the jump distribution of $J(\cdot)$, and in addition, for any $m\in\mathbb{N}$ we let $\widetilde{\tau}(m)$ be the first time that $\widetilde W_m(\cdot)$ hits the origin. In addition, $\psi_k(m)$ is the analogue of $\psi_k$ in the model where $\overline{\Psi}_m(\cdot)$ replaces the jump process associated with $J(\cdot)$.

\begin{lemma}\label{lemma: psi finite}
    For any $m,k\in\mathbb{N}$, $\psi_k(m)$ is finite.
\end{lemma}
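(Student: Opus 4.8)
The plan is to bypass any probabilistic moment estimate and argue directly from the closed form of the transform of $\zeta$ in the truncated model. Write $\varphi_m$ for the Laplace exponent of $J_m$ and $\psi_m(\alpha)=\alpha\varphi_m'(0+)/\varphi_m(\alpha)$ for the analogue of \eqref{psi} associated with $J_m$, so that by definition $\psi_k(m)=\psi_m^{(k)}(0+)$, which is finite precisely when $\mathbb{E}\zeta^k$ — computed in the $J_m$-model — is finite. The key structural observation is that $\overline{\Psi}_m(\cdot)$ is, by construction, a compound Poisson process with finite jump intensity and with all jumps bounded (by $m$), so the L\'evy measure $\nu_m$ of $J_m$ is a finite measure supported on a bounded set. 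Hence
\[
\varphi_m(\alpha)=c\alpha-\frac{\sigma^2\alpha^2}{2}-\int_{(0,\infty)}\bigl(e^{-\alpha y}-1\bigr)\,\nu_m({\rm d}y)
\]
extends to an entire function of $\alpha\in\mathbb{C}$: the first two terms are polynomials, and the integral is entire because $y\mapsto e^{-\alpha y}-1$ is bounded uniformly for $y$ in the (bounded) support of $\nu_m$ and $\alpha$ in any compact set, which legitimises differentiation under the integral sign.

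Next I would record that this entire function has a simple zero at the origin: $\varphi_m(0)=0$, while $\varphi_m'(0+)=\varrho_m=\mathbb{E}J_m(1)$, and $\varrho_m\neq 0$ since $J_m\leqslant J$ pointwise gives $\varrho_m=\mathbb{E}J_m(1)\leqslant\mathbb{E}J(1)=\varrho<0$ (cf.\ \eqref{eq:defrho}). Consequently $\alpha\mapsto\alpha/\varphi_m(\alpha)$, with the removable singularity at $0$ filled in by the value $1/\varrho_m$, is holomorphic on a neighbourhood of $0$ in $\mathbb{C}$; multiplying by the constant $\varphi_m'(0+)$, the function $\psi_m$ is real-analytic on a right neighbourhood of $0$. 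Since $\psi_m$ coincides there with the actual LST $\alpha\mapsto\mathbb{E}e^{-\alpha\zeta}$, and a real-analytic function has finite one-sided derivatives of every order, it follows immediately that $\psi_k(m)=\psi_m^{(k)}(0+)$ is finite for all $k$, which is the assertion of the lemma. (Incidentally, applying monotone convergence to $\psi_m^{(k)}(\alpha)=(-1)^k\mathbb{E}\bigl[\zeta^k e^{-\alpha\zeta}\bigr]$ as $\alpha\downarrow 0$ identifies this finite value with $(-1)^k\mathbb{E}\zeta^k$, the form in which it is used in Theorem \ref{thm: Takacs}.)

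I expect the only genuinely delicate point to be verifying that $\varphi_m$ is entire and, more importantly, that the analytic continuation of $\psi_m$ near $0$ really controls its right derivatives there (as opposed to merely guaranteeing the existence of $\psi_m(\alpha)$ for $\alpha>0$); everything else is bookkeeping. An alternative, more hands-on route would instead invoke Theorem \ref{thm: basic} in the truncated model to express $\mathbb{E}\int_0^{\widetilde{\tau}(m)}\widetilde W_m(t)^k\,{\rm d}t$ as a mixture of moments of $\zeta$ with the bounded weights $V\wedge m\leqslant m$ and $U\leqslant 1$, and then bound the area integral using tail estimates for $\widetilde{\tau}(m)$ and for $\sup_{0\leqslant t\leqslant\widetilde{\tau}(m)}\widetilde W_m(t)$ — both of which have light (in fact sub-exponential-complement) tails because $J_m$ has bounded jumps and negative drift — but this seems strictly more laborious than the transform argument above.
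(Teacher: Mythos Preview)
Your argument is correct and takes a genuinely different, far more economical route than the paper's own proof. The paper establishes finiteness of the moments of $\zeta$ in the truncated model by a lengthy probabilistic decomposition: it splits the drift as $c=c_1+c_2$, separates $\widetilde W_m$ into a drifted Brownian part $P$ and a CPP part $Q$, bounds $\widetilde\tau(m)$ by a last-exit-plus-hitting time $\upsilon_{P,Q}$, and then controls $\mathbb{E}\int_0^{\upsilon_{P,Q}}[P^+(t)+Q^+(t)]^k\,{\rm d}t$ via Minkowski, compound-geometric representations, Delbaen's ruin-time moment result, Pe\v skir's stopping inequality for $\int|B|^k$, and the Abundo--Del~Vescovo formula for Brownian first-passage moments. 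This yields explicit (if unwieldy) bounds but invokes several external references.

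Your approach bypasses all of this by observing that the truncation makes $\nu_m$ a finite measure with bounded support, so $\varphi_m$ is entire with a simple zero at~$0$ (simplicity follows from $\varrho_m\leqslant\varrho<0$), whence $\psi_m$ is analytic in a complex neighbourhood of the origin. The identification $\psi_m^{(k)}(0+)=(-1)^k\mathbb{E}\zeta_m^k$ via monotone convergence then gives the lemma. This is shorter, self-contained, and arguably the ``right'' proof given how $\psi_k(m)$ is actually used downstream (namely, only as a finite input to the recursion in Lemma~\ref{lemma: bounded recursion}). The paper's route would have an edge only if one needed quantitative control of the moments uniformly in~$m$, which is not required here.

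Your closing remark that a ``more hands-on route'' via tail estimates would be more laborious is prescient: that is essentially what the paper does.
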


The main idea behind the next two lemmas is the following. We consider \eqref{eq: Takacs1} for the model in which the jumps are bounded by $m$, i.e., we assume that the jumps are uniquely generated by the process $J_m(\cdot)$. We then show that when taking $m\to\infty$, both sides of \eqref{eq: Takacs1} actually converge to their counterparts corresponding to the jump process $J(\cdot)$ with unbounded jumps.\color{black} 

\begin{lemma}\label{lemma: psi convergence}
For any $k,m\in\mathbb{N}$, let $\psi_k(m)$ be the analogue of $\psi_k$ in the model with $W(\cdot)$ replaced by $\widetilde W_m(\cdot)$. Then, for every $k\in\mathbb{N}$,
\begin{equation}
 \psi_k(m)\rightarrow\psi_k\ \ \text{as}\ \ m\uparrow\infty\,.   
\end{equation}   
\end{lemma}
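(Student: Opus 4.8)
The plan is to identify $\psi_k(m)$ and $\psi_k$ as (signed) moments of the random variables $\zeta(m)$ and $\zeta$ whose LSTs are, respectively, $\psi^{(m)}(\alpha) = \alpha\varphi_1(m)/\varphi^{(m)}(\alpha)$ and $\psi(\alpha) = \alpha\varphi_1/\varphi(\alpha)$, where $\varphi^{(m)}(\cdot)$ is the Laplace exponent of $J_m(\cdot)$. First I would record that, since $\Delta\Psi_{i,m}(t) = m\wedge\Delta\Psi_i(t)$ and $J_m(\cdot)\uparrow J(\cdot)$ pointwise, the associated L\'evy measures satisfy $\nu_m({\rm d}y)\to\nu({\rm d}y)$ in the appropriate sense — more precisely, for every bounded continuous $h$ vanishing near $0$ one has $\int h\,{\rm d}\nu_m \to \int h\,{\rm d}\nu$, and the truncation means $\eta_j(m)\uparrow \eta_j$ for each fixed $j$ whenever $\eta_j<\infty$ (and $\eta_j(m)\to\infty$ otherwise). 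Consequently $\varphi_j(m)\to\varphi_j$ for every $j$ via the formulas \eqref{eq: phi-eta}, and in particular $\varphi_1(m) = c+\eta_1(m)\to c+\eta_1 = \varphi_1 = \varrho < 0$, so the denominators are bounded away from $0$ uniformly in large $m$.

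Next I would split into two cases according to whether $\eta_{k+1}<\infty$. \emph{Case 1: $\eta_{k+1}<\infty$.} By Corollary \ref{cor: finite condition}, $\mathbb{E}\zeta^{k+1}<\infty$, and by Lemma \ref{lemma: psi finite} each $\psi_k(m)$ is finite; moreover $\eta_{k+1}(m)\le \eta_{k+1}<\infty$ uniformly. The cleanest route is induction on $k$ using Takacs' recursion \eqref{eq: Takacs1}, which is valid in both the truncated model (by Lemma \ref{lemma: bounded case} together with Lemma \ref{lemma: psi finite}, since $\mathbb{E}\zeta(m)^{k+1}<\infty$) and the limit model (by Lemma \ref{lemma: bounded case}, since $\eta_{k+1}<\infty$ gives $\mathbb{E}\zeta^{k+1}<\infty$). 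The base case $k=1$ follows from $\psi_1(m) = -\varphi_2(m)/(2\varphi_1(m))\to -\varphi_2/(2\varphi_1) = \psi_1$. For the inductive step, assume $\psi_i(m)\to\psi_i$ for all $i<k$; then in
\begin{equation}
\psi_k(m) = -\frac{1}{(k+1)\varphi_1(m)}\sum_{i=0}^{k-1}\binom{k+1}{i}\psi_i(m)\,\varphi_{k+1-i}(m)
\end{equation}
every factor converges ($\varphi_1(m)\to\varrho\ne 0$, $\psi_i(m)\to\psi_i$, $\varphi_{k+1-i}(m)\to\varphi_{k+1-i}$ as $k+1-i\le k+1$ and $\eta_{k+1}<\infty$ forces all lower moments finite), so $\psi_k(m)\to\psi_k$ by the same recursion.

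\emph{Case 2: $\eta_{k+1}=\infty$.} Here Theorem \ref{thm: Takacs} gives $(-1)^k\psi_k = \mathbb{E}\zeta^k = \infty$, so I must show $\psi_k(m)\to\infty$ in absolute value. Let $j\le k+1$ be the smallest index with $\eta_j=\infty$. If $j\le k$, then by Case 1 applied up to order $j-2$ the lower $\psi_i(m)$ converge, while a monotone-convergence argument shows $(-1)^{j-1}\psi_{j-1}(m)=\mathbb{E}\zeta(m)^{j-1}\uparrow\mathbb{E}\zeta^{j-1}=\infty$ (using $\zeta(m)\Rightarrow\zeta$ together with a uniform-integrability-type lower bound, e.g. Fatou applied to the densities or to Laplace transforms), and then the recursion propagates this divergence up to order $k$; if $j=k+1$, the recursion \eqref{eq: Takacs1} for $\psi_k(m)$ has all convergent ingredients except $\varphi_{k+1-0}(m)=\varphi_{k+1}(m)$, whose absolute value tends to $\eta_{k+1}=\infty$, giving $|\psi_k(m)|\to\infty$.

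The main obstacle I anticipate is the bookkeeping in Case 2 — in particular, justifying that $\mathbb{E}\zeta(m)^{j-1}\to\mathbb{E}\zeta^{j-1}$ (possibly $=\infty$) rather than merely $\liminf$. I would handle this by noting that $\zeta(m)$ is stochastically increasing in $m$ (its LST $\psi^{(m)}(\alpha)$ is monotone, which follows from $\varphi^{(m)}(\alpha)\le\varphi(\alpha)$ for each $\alpha$ combined with the structure of the Pollaczek–Khinchine representation), so that $\mathbb{E}\zeta(m)^{j-1}$ is monotone in $m$ and the monotone convergence theorem applies directly, with the limit equal to $\mathbb{E}\zeta^{j-1}$ by the weak convergence $\zeta(m)\Rightarrow\zeta$ and monotonicity. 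A secondary, purely technical point is establishing $\varphi_j(m)\to\varphi_j$ for every $j$ uniformly enough to pass to the limit in the finite sum of the recursion; this is immediate from \eqref{eq: phi-eta} and $\eta_j(m)\uparrow\eta_j$, since the sum has only $k$ terms.
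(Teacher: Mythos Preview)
Your argument is circular. In Case~1 you want to apply the Takacs recursion \eqref{eq: Takacs1} to $\psi_k$ in the \emph{untruncated} model, and you invoke Lemma~\ref{lemma: bounded case} under the hypothesis $\mathbb{E}\zeta^{k+1}<\infty$; you obtain that hypothesis from $\eta_{k+1}<\infty$ by citing Corollary~\ref{cor: finite condition}. But Corollary~\ref{cor: finite condition} is a consequence of Theorem~\ref{thm: Takacs}, and the proof of Theorem~\ref{thm: Takacs} (via Lemma~\ref{lemma: bounded recursion}) relies on Lemma~\ref{lemma: psi convergence} --- the very lemma you are proving. The same circularity occurs in Case~2, where you invoke Theorem~\ref{thm: Takacs} directly. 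In the paper's architecture, Lemma~\ref{lemma: psi convergence} is precisely the device that lets one transfer the recursion from the truncated models (where it is available by Lemmas~\ref{lemma: bounded case} and~\ref{lemma: psi finite}) to the untruncated model; presupposing the recursion in the untruncated model defeats the purpose of the lemma and of the entire truncation scheme described in Subsection~\ref{subsec: intuition}.

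The paper sidesteps this by a direct pathwise argument that makes no reference to the recursion at all: by construction $\widetilde\tau(m)\uparrow\tau$ and $\widetilde W_m^k(t)\,\textbf{1}_{[0,\widetilde\tau(m)]}(t)\uparrow W^k(t)\,\textbf{1}_{[0,\tau)}(t)$ pointwise, so monotone convergence yields the convergence of $\mathbb{E}\widetilde\tau(m)$ and of $\mathbb{E}\int_0^{\widetilde\tau(m)}\widetilde W_m^k(t)\,{\rm d}t$ to their untruncated counterparts (finite or not). Since $\psi_k(m)$ and $\psi_k$ are recovered from these quantities via the stationary-measure identification in the proof of Theorem~\ref{thm: basic}, the claim follows --- without ever needing to know in advance whether $\mathbb{E}\zeta^{k+1}$ is finite.
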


\begin{proof}
 %For any $m\in\mathbb{N}$ let $\widetilde{\tau}(m)$ be the first time that $\widetilde W_m(\cdot)$ hits the origin. 
 By construction, we have that
\begin{enumerate}
    \item $\widetilde{\tau}(m)\uparrow\tau$ as $m\uparrow\infty$ , $\mathbb{P}$-a.s.;

    \item for any $t>0$ and $k\in\mathbb{N}$, 
    \begin{equation}
        0\leqslant \widetilde W^k_m(t)\,\textbf{1}_{\left[0,\widetilde{\tau}(m)\right]}(t)\uparrow W^k(t)\,\textbf{1}_{\left[0,\tau\right)}(t)\ \ \text{as}\ \ m\uparrow\infty\\ \ ,\ \ \mathbb{P}\text{-a.s.}
    \end{equation}
\end{enumerate}
As a result, by the monotone convergence theorem, 
\begin{equation}
    \mathbb{E}\widetilde{\tau}(m)\uparrow\mathbb{E}\tau\ \ \text{as}\ \ m\uparrow\infty\,,
\end{equation}
and 
\begin{align}
    \mathbb{E}\int_0^{\widetilde{\tau}(m)}\widetilde W_m^k(t)\,{\rm d}t&=\mathbb{E}\int_0^\infty \widetilde W_m^k(t)\,\textbf{1}_{\left[0,\widetilde{\tau}(m)\right]}\,{\rm d}t\\&\nonumber\uparrow \mathbb{E}\int_0^\infty W^k(t)\,\textbf{1}_{\left[0,\tau\right)}\,{\rm d}t=\mathbb{E}\int_0^{\tau}W^k(t)\,{\rm d}t\ \ \text{as}\ \ m\uparrow\infty\,.
\end{align}
 Combining the above with the definitions of $\psi_k(m)$ and $\psi_k$ yields the claim. 
\end{proof}
 
\begin{lemma}\label{lemma: phi convergence} For any $m\in\mathbb{N}$, denote by $\varphi_k(m)$ the analogue of $\varphi_k$ in the model with $W(\cdot)$ replaced by $\widetilde{W}_m(\cdot)$. If $\eta_k<\infty$, then  $\varphi_{k+1}(m)\rightarrow\varphi_{k+1}$ as $m\uparrow\infty$.
\end{lemma}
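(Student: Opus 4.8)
The plan is to compute $\varphi_{k+1}(m)$ explicitly via the analogue of the identities \eqref{eq: phi-eta} for the truncated model, and then apply a dominated (or monotone) convergence argument to the relevant moment of the jump measure. Recall that the Laplace exponent $\varphi(\cdot)$ of $J(\cdot)$ has L\'evy measure $\nu$, so that, for $i\geqslant 3$, $\varphi_i=(-1)^{i+1}\eta_i=(-1)^{i+1}\int_{(0,\infty)}y^i\,\nu({\rm d}y)$, while $\varphi_2=-(\sigma^2+\eta_2)$. The process $J_m(\cdot)$ is obtained from $J(\cdot)$ by replacing each jump of size $y$ by a jump of size $y\wedge m$; hence its L\'evy measure is the pushforward $\nu_m$ of $\nu$ under the map $y\mapsto y\wedge m$, and its Brownian and drift components are unchanged (note the sign convention $J_m(t)=-ct+B_\sigma(t)+\overline\Psi_m(t)$, but for our purposes only the jump part matters for $\varphi_{k+1}(m)$ when $k+1\geqslant 2$). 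Consequently, for $k+1\geqslant 3$,
\begin{equation}\label{eq:phikm}
    (-1)^{k+2}\varphi_{k+1}(m)=\eta_{k+1}(m)\equiv\int_{(0,\infty)}(y\wedge m)^{k+1}\,\nu({\rm d}y),
\end{equation}
and for $k+1=2$, $\varphi_2(m)=-(\sigma^2+\eta_2(m))$ with $\eta_2(m)=\int_{(0,\infty)}(y\wedge m)^2\,\nu({\rm d}y)$. So in all cases $\varphi_{k+1}(m)\to\varphi_{k+1}$ reduces to $\eta_{k+1}(m)\to\eta_{k+1}$.

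The key step is then the convergence $\eta_{k+1}(m)\to\eta_{k+1}$ as $m\uparrow\infty$ under the hypothesis $\eta_k<\infty$. First I would observe that $(y\wedge m)^{k+1}\uparrow y^{k+1}$ pointwise as $m\uparrow\infty$, so by the monotone convergence theorem $\eta_{k+1}(m)\uparrow\eta_{k+1}$ regardless of whether $\eta_{k+1}$ is finite. The subtlety is that Lemma \ref{lemma: phi convergence} only assumes $\eta_k<\infty$, not $\eta_{k+1}<\infty$, so a priori the limit could be $+\infty$ on both sides; but that is fine — the statement $\varphi_{k+1}(m)\to\varphi_{k+1}$ is to be read in $[-\infty,\infty]$ (or $[0,\infty]$ up to sign), exactly as in the statement of Theorem \ref{thm: Takacs} where the ``otherwise'' clause allows $\psi_k=\pm\infty$. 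Alternatively, one may note that $\eta_k<\infty$ together with $\int_{(0,\infty)}(1\wedge y)\,\nu({\rm d}y)<\infty$ forces $\nu$ to have finite mass on $[1,\infty)$ and controls small jumps, which is enough to make the monotone convergence legitimate and to identify the limit. Either way, the heart of the matter is just monotone convergence applied to $(y\wedge m)^{k+1}$.

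I expect the only genuine obstacle to be a bookkeeping one: making sure that the ``analogue'' $\varphi_{k+1}(m)$ is correctly identified with the $(k+1)$-st right-derivative at $0$ of the Laplace exponent of $J_m(\cdot)$, i.e.\ that differentiating under the integral sign in \eqref{eq: varphi} (with $\nu$ replaced by $\nu_m$) is justified — this is immediate because $\nu_m$ is a finite measure away from $0$ with bounded support contribution, so all moments $\eta_i(m)$ are finite and $\varphi_m(\cdot)$ is smooth on $[0,\infty)$, which is precisely what underlies Lemma \ref{lemma: psi finite}. Once that identification is in place, the proof is short: write $\varphi_{k+1}(m)=(-1)^{k+2}\eta_{k+1}(m)$ (resp.\ $\varphi_2(m)=-\sigma^2-\eta_2(m)$), invoke monotone convergence for $(y\wedge m)^{k+1}\uparrow y^{k+1}$, and conclude $\varphi_{k+1}(m)\to\varphi_{k+1}$. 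I would present it in roughly that order, leading with the explicit formula \eqref{eq:phikm} and then the one-line convergence argument.
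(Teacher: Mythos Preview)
Your approach is correct and genuinely simpler than the paper's. The paper does not work directly with the L\'evy measure of $J_m(\cdot)$; instead it passes through the moment--cumulant recursions \eqref{eq: cumulants to moments}--\eqref{eq: moments to cumulants}: it first argues that $\overline{\Psi}_m(1)\uparrow\Psi(1)$ forces the raw moments $\mathcal{J}_u(m)=\mathbb{E}[J_m(1)]^u$ to converge to $\mathcal{J}_u$ (via monotone convergence on the nonnegative jump part, combined with the binomial expansion and independence of the Gaussian component), and then recursively transfers this to the cumulants $\mathcal{K}_u(m)=(-1)^{u+1}\varphi_u(m)$. Your route --- identify $\varphi_{k+1}(m)$ directly as (up to sign, and the additive $\sigma^2$ when $k+1=2$) the $(k+1)$-st moment of the truncated L\'evy measure and apply monotone convergence to $(y\wedge m)^{k+1}$ --- bypasses all of that. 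What the paper's detour buys is that it never needs to write down the L\'evy measure of $J_m(\cdot)$ explicitly; what your approach buys is a two-line proof.

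One small bookkeeping correction: by the construction in Subsection~\ref{subsec: intuition}, the jump part $\overline{\Psi}_m(\cdot)$ also discards the very small jumps (those coming from $\Psi_i$ with $i>m$, i.e.\ sizes below $2^{-(m+1)}$), so the L\'evy measure of $J_m(\cdot)$ is not exactly the pushforward of $\nu$ under $y\mapsto y\wedge m$ but rather that pushforward restricted to $[2^{-(m+1)},\infty)$. This changes nothing in your argument: the integrand $(y\wedge m)^{k+1}\,\textbf{1}_{[2^{-(m+1)},\infty)}(y)$ is still monotone in $m$ with pointwise limit $y^{k+1}$, so monotone convergence still yields $\eta_{k+1}(m)\uparrow\eta_{k+1}$ (finite or not), and hence $\varphi_{k+1}(m)\to\varphi_{k+1}$.
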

\begin{proof}
Note that for each $u\in\mathbb{N}$, $\mathcal{K}_u\equiv(-1)^{u+1}\varphi_u$ (resp.\ $\mathcal{K}_u(m)\equiv(-1)^{u+1}\varphi_u(m)$, with $m\in\mathbb{N}$) is the $u$-th cumulant of $J(1)$ (resp. $J_m(1)$, with $m\in\mathbb{N}$). In addition, 
for each $u\in\mathbb{N}$ denote  $\mathcal{J}_u\equiv\mathbb{E}\left[J(1)\right]^u$ (resp.\ $\mathcal{J}_u(m)\equiv\mathbb{E}\left[J_m(1)\right]^u$, with $m\in\mathbb{N}$). The following recursive relations are well known:
\begin{equation}\label{eq: cumulants to moments}
    \mathcal{J}_u=\sum_{a=0}^{u-1}\binom{u-1}{a}\mathcal{K}_{u-a}\mathcal{J}_a
\end{equation}
and
\begin{equation}\label{eq: moments to cumulants}
    \mathcal{K}_u=\mathcal{J}_u-\sum_{a=1}^{u-1}\binom{u-1}{a}\mathcal{K}_{u-a}\mathcal{J}_a
\end{equation}
with the initial condition $\mathcal{J}_1=\mathcal{K}_1$; see, e.g., \cite[Equations (5) and (6)]{Smith1995}.

By assumption we have $\eta_k<\infty$, and hence $\eta_i<\infty$ for any $i=1,2,\ldots,k$. Therefore, \eqref{eq: phi-eta} implies that the cumulants $\mathcal{K}_1,\mathcal{K}_2,\ldots,\mathcal{K}_k$ are all finite. Thus, a recursive application of \eqref{eq: cumulants to moments} gives us that $\mathcal{J}_1,\mathcal{J}_2,\ldots,\mathcal{J}_k$ are all finite.

Observe that the Brownian motion $B(\cdot)$ and the nonnegative jump process $\Psi(\cdot)\equiv\sum_{i=0}^\infty\Psi_i(\cdot)$ are independent, so that, for any $u\in\mathbb{N}$ we have 
\begin{equation}
    \mathcal{J}_u=\mathbb{E}\left[\sigma B(1)+c+\Psi(1)\right]^u=\sum_{a=0}^u\binom{u}{a}\mathbb{E}\left[\sigma B(1)+c\right]^a\mathbb{E}\Psi^{u-a}(1).
\end{equation}
Note that $\mathcal{J}_1,\mathcal{J}_2,\ldots,\mathcal{J}_k$ are finite, the moments of $\sigma B(1)+c$ are finite, and therefore $\mathbb{E}\Psi^u(1)<\infty$ for any $u=1,2,\ldots,k$.
Also, in a similar fashion, for any $m,u\in\mathbb{N}$ we find that 
\begin{equation}
    \mathcal{J}_u(m)=\mathbb{E}\left[\sigma B(1)+c+\overline{\Psi}_m(1)\right]^u=\sum_{a=0}^u\binom{u}{a}\mathbb{E}\left[\sigma B(1)+c\right]^a\mathbb{E}\overline{\Psi}_m^{u-a}(1).
\end{equation}
By construction, we have $0\leqslant \overline{\Psi}_m(1)\uparrow\Psi(1)$ as $m\uparrow\infty$ and hence for any $j=1,2,\ldots,k$ we have $0\leqslant \overline{\Psi}^{k-j}_{m}(1)\uparrow\Psi^{k-j}(1)$ as $m\uparrow\infty$, i.e., monotone convergence implies that $\mathbb{E}\overline{\Psi}_m^{k-j}(1)\uparrow\mathbb{E}\Psi^{k-j}(1)$ as $m\uparrow\infty$. Thus, since $\sigma B(1)+c$ has finite moments and $\mathbb{E}\Psi^u(1)<\infty$ for any $u=1,2,\ldots,k$, standard limit arithmetic as $m\uparrow\infty$ implies that $\mathcal{J}_u(m)\rightarrow J_u$ as $m\uparrow\infty$ for any $u=1,2,\ldots,k+1$ such that $\mathcal{J}_1,\mathcal{J}_2,\ldots,\mathcal{J}_k$ are finite and $\mathcal{J}_{k+1}$ is infinite iff $\mathbb{E}\Psi^{k+1}(1)=\infty$. This means that for any $u=1,2,\ldots,k$,
\begin{equation}\label{eq:limi}
    \mathcal{J}_{u+1}-\sum_{a=1}^{u}\binom{u}{a}\mathcal{K}_{u+1-a}\mathcal{J}_a=\lim_{m\uparrow\infty}\left[\mathcal{J}_{u+1}(m)-\sum_{a=1}^{u}\binom{u}{a}\mathcal{K}_{u+1-a}(m)\mathcal{J}_a(m)\right]
\end{equation}
and the limit is finite for any $u=1,2,\ldots,k-1$ and for $u=k$ it is infinite iff $\mathcal{J}_{k+1}=\infty$. Then, the claim follows recursively (by taking $u=1,2,\ldots,k$) from the following equation, obtained by combining \eqref{eq: moments to cumulants} with \eqref{eq:limi}: 
\begin{align}
    (-1)^{u+2}\varphi_{u+1}&=\mathcal{K}_{u+1}=\mathcal{J}_{u+1}-\sum_{a=1}^{u}\binom{u}{a}\mathcal{K}_{u+1-a}\mathcal{J}_a\\&\nonumber=\lim_{m\uparrow\infty}\left[\mathcal{J}_{u+1}(m)-\sum_{a=1}^{u}\binom{u}{a}\mathcal{K}_{u+1-a}(m)\mathcal{J}_a(m)\right]\\&\nonumber=\lim_{m\uparrow\infty}\mathcal{K}_{u+1}(m)=(-1)^{u+2}\lim_{m\uparrow\infty}\varphi_{u+1}(m)\,,
\end{align}
where for $u=0$ we have
\begin{equation}
\varphi_1=\mathcal{K}_1=\mathcal{J}_1=\lim_{m\uparrow\infty}\mathcal{J}_1(m)=\lim_{m\uparrow\infty}\mathcal{K}_1(m)=\lim_{m\uparrow\infty}\varphi_1(m)\,.
\end{equation}
%\color{red} In particular, observe that for each $u=1,2,\ldots,k$, the sum inside the brackets in the second line of the recursion converges to a finite limit as $m\uparrow\infty$ and hence the limit of the brackets as $m\uparrow\infty$ is well defined even when $\mathcal{J}_{k+1}(m)\rightarrow\mathcal{J}_{k+1}=\infty$ as $m\uparrow\infty$. \color{black}
This concludes the proof.
\end{proof}
 
%\begin{lemma}\label{lemma: phi convergence}
%Let $k\in\mathbb{N}$ be such that $\eta_k<\infty$. In addition, for each $m\in\mathbb{N}$ let $\varphi_k(m)$ be the analogue of $\varphi_k$ in the model with $W(\cdot)$ replaced by $\widetilde{W}_m(\cdot)$. Then, $\varphi_k(m)\rightarrow\varphi_k$ as $m\uparrow\infty$.
%\end{lemma}

%\begin{proof}
 % Notice that $\varphi_k(m)$ (resp.\ $\varphi_k$) is the negative of the $k$-th cumulant of $J_m(1)$ (resp.\ $J(1)$). Therefore, it is a continuous function of the moments
 % \begin{equation}
%\mathbb{E}J_m(1),\mathbb{E}J^2_m(1),\ldots,\mathbb{E}J_m^k(1)
 % \end{equation}    (resp.\ $\mathbb{E}J(1),\mathbb{E}J^2(1),\ldots,\mathbb{E}J^k(1)$). 
%For any $m\in\mathbb{N}$ and $j\in\{1,\ldots, k\}$ denote $\color{red}\vartheta\color{black}_{m,j}\equiv \left[J(1)-J_m(1)\right]^j$ and observe that for any $j\in\{1,\ldots, k\}$, $\upsilon_{m,j}\downarrow0$ as $m\uparrow\infty$\color{black}, $\mathbb{P}$-a.s. \color{black} In addition, it is given that $\eta_k<\infty$ and hence \color{red} \eqref{eq: phi-eta} implies that \color{black} $\varphi_1(1),\varphi_2(1),\ldots,\varphi_k(1)$ as well as $\varphi_1,\varphi_2,\ldots,\varphi_k$ are all finite. Thus, for any  $j\in\{1,\ldots, k\}$, $\mathbb{E}\color{red}\vartheta\color{black}_{1,j}<\infty$. Consequently, for any  $j\in\{1,\ldots, k\}$, apply the `decreasing version' of the monotone convergence theorem in order to deduce that $\mathbb{E}\upsilon_{m,j}\downarrow0$ as $m\uparrow\infty$, from which the result follows. 
%\end{proof}
By applying the previous lemmas, we conclude the validity of the first part of Theorem \ref{thm: Takacs}.
\begin{lemma}\label{lemma: bounded recursion}
   If $\eta_k<\infty$, then \eqref{eq: Takacs1} is valid.
\end{lemma}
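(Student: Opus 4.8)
\textbf{Proof plan for Lemma \ref{lemma: bounded recursion}.}
The plan is to combine the four preceding lemmas into a limiting argument. Assume $\eta_k<\infty$, which by \eqref{eq: phi-eta} means $\eta_1,\ldots,\eta_k$ are all finite and $\varphi_1,\ldots,\varphi_{k+1}$ are all finite (with $\varphi_{k+1}$ possibly infinite only if $\eta_{k+1}=\infty$, but finiteness of $\eta_k$ gives finiteness of $\varphi_i$ for $i\leqslant k$). The first step is to work in the truncated model where the jump process associated with $J(\cdot)$ is replaced by $\overline{\Psi}_m(\cdot)$, i.e.\ where $W(\cdot)$ is replaced by $\widetilde W_m(\cdot)$. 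In this model all jumps are bounded by $m$, so $\mathbb{E}[\widetilde\tau(m)]^{j}<\infty$ for every $j$, and in particular Lemma \ref{lemma: psi finite} gives that $\psi_j(m)$ is finite for every $j\in{\mathbb N}$; equivalently $\mathbb{E}\zeta_m^{j}<\infty$ for all $j$, where $\zeta_m$ is the analogue of $\zeta$ in the truncated model. Consequently, applying Lemma \ref{lemma: bounded case} in the truncated model (with $k$ there taken to be $k$, using that $\mathbb{E}\zeta_m^{k+1}<\infty$) yields the identity
\begin{equation}\label{eq: takacs m}
    (k+1)\varphi_1(m)\,\psi_k(m)=-\sum_{i=0}^{k-1}\binom{k+1}{i}\psi_i(m)\,\varphi_{k+1-i}(m)\,.
\end{equation}

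The second step is to pass to the limit $m\uparrow\infty$ in \eqref{eq: takacs m}, term by term. For the left-hand side, Lemma \ref{lemma: psi convergence} gives $\psi_k(m)\to\psi_k$ and the $u=0$ case inside the proof of Lemma \ref{lemma: phi convergence} gives $\varphi_1(m)\to\varphi_1$, so the left-hand side converges to $(k+1)\varphi_1\psi_k$. For the right-hand side, for each $i\in\{0,1,\ldots,k-1\}$ we have $\psi_i(m)\to\psi_i$ by Lemma \ref{lemma: psi convergence}, and since $k+1-i\in\{2,\ldots,k+1\}$ and $\eta_{k}<\infty$ we may invoke Lemma \ref{lemma: phi convergence} (with its ``$k$'' taken to be $k-i$, noting $\eta_{k-i}<\infty$) to get $\varphi_{k+1-i}(m)\to\varphi_{k+1-i}$. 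Each of these limits is finite, so the finite sum on the right converges to $-\sum_{i=0}^{k-1}\binom{k+1}{i}\psi_i\varphi_{k+1-i}$. Equating the two limits gives
\begin{equation*}
    (k+1)\varphi_1\,\psi_k=-\sum_{i=0}^{k-1}\binom{k+1}{i}\psi_i\,\varphi_{k+1-i}\,,
\end{equation*}
and dividing by $(k+1)\varphi_1$ (which is nonzero since $\varphi_1=\varrho<0$) produces exactly \eqref{eq: Takacs1}. Finally, recording $\mathbb{E}\zeta^k=(-1)^k\psi_k$ from the definition of $\zeta$ completes the statement.

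The main obstacle is the bookkeeping in the second step: one must be careful that the index shifts line up, i.e.\ that $\eta_k<\infty$ really does supply, via Lemma \ref{lemma: phi convergence}, the convergence $\varphi_{k+1-i}(m)\to\varphi_{k+1-i}$ for \emph{every} $i\in\{0,\ldots,k-1\}$ simultaneously and that each limit is finite (so no $\infty-\infty$ arises in the sum); this is exactly where the hypothesis $\eta_k<\infty$ rather than merely $\eta_{k-1}<\infty$ is used, since the term $i=0$ requires $\varphi_{k+1}(m)\to\varphi_{k+1}$ with finite limit, which Lemma \ref{lemma: phi convergence} delivers precisely under $\eta_k<\infty$. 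A secondary point worth stating explicitly is that Lemma \ref{lemma: psi finite} (hence the applicability of Lemma \ref{lemma: bounded case} in the truncated model) relies on the truncated driving process still having negative asymptotic drift and finite $\mathbb{E}(V\wedge m)$, so that $\widetilde\tau(m)$ has finite mean; this holds automatically since $\varphi_1(m)\to\varphi_1<0$ implies $\varphi_1(m)<0$ for all large $m$, and in fact $\overline\Psi_m(1)\leqslant\Psi(1)$ gives $\mathbb{E}J_m(1)\leqslant\mathbb{E}J(1)<0$ for all $m$.
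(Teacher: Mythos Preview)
Your proof is correct and follows exactly the paper's approach: establish the recursion \eqref{eq: Takacs1} in the truncated model via Lemmas~\ref{lemma: bounded case} and~\ref{lemma: psi finite}, then pass to the limit $m\uparrow\infty$ using Lemmas~\ref{lemma: psi convergence} and~\ref{lemma: phi convergence}. Your write-up is in fact more detailed than the paper's, which simply states the truncated identity and invokes the two convergence lemmas.

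One small inaccuracy worth flagging: you assert that ``each of these limits is finite'' and that Lemma~\ref{lemma: phi convergence} delivers $\varphi_{k+1}(m)\to\varphi_{k+1}$ \emph{with finite limit} under $\eta_k<\infty$. That is not quite what the lemma says: it guarantees convergence, but $\varphi_{k+1}$ may be infinite when $\eta_{k+1}=\infty$ (cf.\ \eqref{eq: phi-eta}). This does not damage your argument, since the only possibly infinite term in the sum is the one with $i=0$, all others being finite; hence no indeterminate form arises and the identity \eqref{eq: Takacs1} still holds in the extended reals, with both sides equal to $\pm\infty$ in that case (consistent with Corollary~\ref{cor: finite condition}). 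The paper's own proof is silent on this point as well.
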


\begin{proof}
    For each $j,m\in\mathbb{N}$, Lemma \ref{lemma: bounded case} and Lemma \ref{lemma: psi finite} imply that
    \begin{equation}\label{eq: psi_k(m)}
        \psi_j(m)=-\frac{1}{(j+1)\varphi_1(m)}\sum_{i=0}^{j-1}\binom{j+1}{i}\psi_i(m)\varphi_{j+1-i}(m)\,.
    \end{equation}
    In addition, $\eta_k<\infty$ implies that $\eta_i<\infty$ for any $i\in\{1,\ldots,k\}$. Therefore, by taking $m\uparrow\infty$ on both sides of \eqref{eq: psi_k(m)} (for any $i\in\{1,\ldots,k\}$), appealing to Lemmas \ref{lemma: psi convergence} and  \ref{lemma: phi convergence} implies the required result.
\end{proof}
The second part of Theorem \ref{thm: Takacs} follows from the next lemma.
\begin{lemma}
    If $\eta_k=\infty$, then  $\mathbb{E}\zeta^k=(-1)^k\psi_k=\infty$.
\end{lemma}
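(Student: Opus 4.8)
The plan is to argue by contraposition, establishing that $\mathbb{E}\zeta^{k}<\infty$ already forces $\eta_{k}<\infty$. Two preliminary reductions will be made first. Since by \eqref{eq:defrho} finiteness of $\varrho=c+\eta_{1}$ with $\eta_{1}\geqslant 0$ gives $\eta_{1}<\infty$, the hypothesis $\eta_{k}=\infty$ can only occur for $k\geqslant 2$, and I assume $k\geqslant 2$ henceforth. Moreover, for $\alpha>0$ one has $(-1)^{k}\psi^{(k)}(\alpha)=\mathbb{E}[\zeta^{k}e^{-\alpha\zeta}]$, which increases to $\mathbb{E}\zeta^{k}$ as $\alpha\downarrow 0$ by monotone convergence; hence $(-1)^{k}\psi_{k}=\mathbb{E}\zeta^{k}$ whether finite or not, so it suffices to prove $\mathbb{E}\zeta^{k}=\infty$.

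The engine of the proof is the identity $\psi(\alpha)\varphi(\alpha)=\alpha\varphi_{1}$ from \eqref{eq: master takacs}, i.e.\ $\psi(\alpha)\bigl(-\varphi(\alpha)\bigr)=\alpha|\varrho|$ for $\alpha>0$. Writing $c=\varrho-\eta_{1}$ and splitting off $\int_{(0,\infty)}\alpha y\,\nu({\rm d}y)$, I will rewrite
\[
-\varphi(\alpha)=|\varrho|\alpha+\tfrac{\sigma^{2}\alpha^{2}}{2}+R(\alpha),\qquad R(\alpha)\equiv\int_{(0,\infty)}\bigl(e^{-\alpha y}-1+\alpha y\bigr)\,\nu({\rm d}y)\in[0,\infty),
\]
where $R(\alpha)<\infty$ for each $\alpha>0$ because $e^{-\alpha y}-1+\alpha y=O(y^{2})$ near $0$ and $O(y)$ at infinity, together with $\int_{(0,1)}y\,\nu({\rm d}y)<\infty$ and $\int_{[1,\infty)}y\,\nu({\rm d}y)\leqslant\eta_{1}<\infty$. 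Solving the displayed identity for $R$ yields, for $\alpha>0$,
\[
R(\alpha)=\frac{|\varrho|\alpha\bigl(1-\psi(\alpha)\bigr)-\tfrac{\sigma^{2}\alpha^{2}}{2}\psi(\alpha)}{\psi(\alpha)}.
\]
If $\mathbb{E}\zeta^{k}<\infty$, then $\psi(\alpha)=\mathbb{E}e^{-\alpha\zeta}$ is $C^{k}$ on a right neighbourhood $[0,\delta]$ of $0$ (one-sided at $0$, with $\psi^{(j)}(0+)=\mathbb{E}(-\zeta)^{j}$ for $j\leqslant k$, a standard consequence of $\mathbb{E}\zeta^{j}\leqslant 1+\mathbb{E}\zeta^{k}$ and dominated convergence), and since $\psi(0)=1$ the right-hand side above is $C^{k}$ on a possibly smaller $[0,\delta']$, hence so is $R$; in particular $R^{(k)}(0+)$ exists and is finite. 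On the other hand, differentiating under the integral on $(0,\infty)$ — legitimate because on compact subintervals the $j$-th $\alpha$-derivative of $e^{-\alpha y}-1+\alpha y$ is dominated for $1\leqslant j\leqslant k$ by a $\nu$-integrable function (by the same small-$y$/large-$y$ splitting) — gives $(-1)^{k}R^{(k)}(\alpha)=\int_{(0,\infty)}y^{k}e^{-\alpha y}\,\nu({\rm d}y)$ for $k\geqslant 2$. Letting $\alpha\downarrow 0$ and using monotone convergence, the right-hand side increases to $\eta_{k}$ while the left-hand side tends to the finite limit $(-1)^{k}R^{(k)}(0+)$; thus $\eta_{k}<\infty$, contradicting the hypothesis. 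Hence $\mathbb{E}\zeta^{k}=\infty$, and therefore $(-1)^{k}\psi_{k}=\infty$.

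The only genuinely delicate steps are the transfer of $C^{k}$-smoothness from $\psi$ to $R$ through the quotient formula (which is where $\psi(0)=1\neq 0$ enters), and the justification of differentiation under the integral sign together with the accompanying $\nu$-integrable dominating functions. The restriction $k\geqslant 2$ is what makes the $k$-th $\alpha$-derivative of $e^{-\alpha y}-1+\alpha y$ collapse to the clean expression $(-y)^{k}e^{-\alpha y}$, and it is harmless precisely because $\eta_{1}<\infty$ always.
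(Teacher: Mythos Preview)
Your proof is correct and takes a genuinely different route from the paper's. The paper does \emph{not} argue by contraposition; instead it locates the smallest index $k_{0}\in\{1,\dots,k-1\}$ with $\eta_{k_{0}}<\infty$ and $\eta_{k_{0}+1}=\infty$, invokes the already-established recursion \eqref{eq: Takacs1} at $k_{0}$ (valid by Lemma~\ref{lemma: bounded recursion} since $\eta_{k_{0}}<\infty$), and observes that the $i=0$ term on the right contains $\varphi_{k_{0}+1}$, which is infinite by \eqref{eq: phi-eta}. Since all other terms are finite this forces $\mathbb{E}\zeta^{k_{0}}=\infty$, and hence $\mathbb{E}\zeta^{k}=\infty$. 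In other words, the paper's argument leans on the full truncation apparatus (Lemmas~\ref{lemma: bounded case}--\ref{lemma: bounded recursion}) that was built to justify \eqref{eq: Takacs1}.

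Your argument bypasses that machinery entirely. By isolating $R(\alpha)=\int(e^{-\alpha y}-1+\alpha y)\,\nu({\rm d}y)$ from the master identity and expressing it as a smooth rational function of $\psi$, you obtain $R\in C^{k}$ at $0$ directly from $\mathbb{E}\zeta^{k}<\infty$; the punchline $(-1)^{k}R^{(k)}(0+)=\eta_{k}$ then comes from a single monotone-convergence step on the integral representation of $R^{(k)}$. This is more self-contained and arguably more elementary --- it would prove the lemma even without Lemmas~\ref{lemma: psi convergence}--\ref{lemma: bounded recursion}. The paper's approach, by contrast, fits seamlessly into its surrounding narrative and makes the threshold index $k_{0}$ explicit, which is exactly the content of Corollary~\ref{cor: finite condition}.
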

\begin{proof}
Recall \eqref{eq: phi-eta} which states that $\varrho=c+\eta_1$, i.e., as $\varrho$ is finite so is $\eta_1$.
 In addition, $\eta_k=\infty$ is given and therefore there is some $k_0\in\{1,\ldots,k-1\}$ such that $\eta_{k_0}<\infty$ and $\eta_{k_0+1}=\infty$. Lemma \ref{lemma: bounded recursion} implies that 
 \begin{equation}\label{eq: k_0}
    (-1)^{k_0}\mathbb{E}\zeta^{k_0}=\psi_{k_0}=-\frac{1}{(k_0+1)\varphi_1}\sum_{i=0}^{k_0-1}\binom{k_0+1}{i}\psi_i\varphi_{k_0+1-i}\,.
 \end{equation}
Note that $\zeta$ is a nonnegative random variable whose distribution is not fully concentrated at zero and therefore its moments are positive, i.e., $\psi_u\neq0$ for any $u\in\mathbb{N}$. In addition,  $\eta_{k_0+1}=\infty$ and hence \eqref{eq: phi-eta} implies that $\varphi_{k_0+1}$ is infinite. Consequently, the right-hand side of \eqref{eq: k_0} is infinite which means that $\mathbb{E}\zeta^{k_0}=\infty$. Therefore, as $k>k_0$, it follows that $\mathbb{E}\zeta^k=\infty$.    
\end{proof}

\section{High-order moments}\label{sec: high-order moments}
Where the previous section considered the first moment of the vector $\left(A_{f_1},A_{f_2},D_{g_1},D_{g_2}\right)$, we now extend the analysis to high-order moments. 
The main result is Theorem \ref{thm: Gamma}, providing a useful representation. Again, it leads to an explicit computation scheme in the case of polynomial $f(\cdot)$.

We start by introducing some convenient notation. Let $f(\cdot)$ be a nonnegative Borel function on $\mathbb{R}_+$ and denote
\begin{equation}
    \Gamma_af\left(x\right)\equiv \mathbb{E}A_f(x)\ \ , \ \ \Gamma_df\left(x\right)\equiv \mathbb{E}D_f(x)\,,
\end{equation}
where $A_f(x)$ and $D_g(x)$ are defined in $\eqref{eq: functional def}$. In particular, note that Theorem \ref{thm: D_f} implies that $\Gamma_af(x)=\lambda\, \Gamma_df(x)$.
Let $g(\cdot)$ be a nonnegative Borel function on $\mathbb{R}_+$, and for $i\in\{a,d\}$ define a new function $g\Gamma_if(\cdot)$ via
\begin{equation}
    g\Gamma_if(x)\equiv g(x)\cdot\Gamma_if(x)\ \ , \ \ \forall x\geqslant 0\,.
\end{equation}
Furthermore, observe that for $i\in\{a,d\}$ and a constant $\epsilon>0$, the linearity of expectation implies that 
\begin{equation}
    g\Gamma_i(\epsilon f)\equiv \epsilon g\Gamma_if\,.
\end{equation}
As a result,  due to Theorem \ref{thm: D_f},
\begin{equation}
    g\Gamma_d(\epsilon f)\equiv\lambda g\Gamma_a(\epsilon f)=\epsilon\lambda\Gamma_af\,.
\end{equation}
for any $\epsilon>0$. A recursive application of this identity leads to  Lemma \ref{lemma: recursion} below. Notice that for $i=1,2$, $j_i\in\{a,d\}$ and Borel functions $f_i:[0,\infty)\rightarrow[0,\infty)$ we apply the notation $\Gamma_{j_1}f_1\Gamma_{j_2}f_2\equiv\Gamma_{j_1}\left(f_1\Gamma_{j_2}f_2\right)$. Importantly, note that there is a difference between $\Gamma_{j_1}f_1\Gamma_{j_2}f_2$ and $\left(\Gamma_{j_1}f_1\right)\left(\Gamma_{j_2}f_2\right)$. 
\begin{lemma}\label{lemma: recursion}
Let $k\in\{2,3,\ldots\}$. Assume that $f_1,f_2,\ldots,f_k$ are nonnegative  Borel functions on $\mathbb{R}_+$. Then, for any  $\textbf{j}\equiv(j_1,j_2,\ldots,j_k)\in\left\{a,d\right\}^k$,    
\begin{equation}
\Gamma_{j_1}f_1\Gamma_{j_2}f_2\Gamma_{j_3}f_3\ldots \Gamma_{j_k}f_k\equiv\lambda^{u_{\textbf{j}}}f_1\Gamma_a f_2\Gamma_a\ldots f_{k-1}\Gamma_af_k\,,    
\end{equation}
where $u_{\textbf{j}}\equiv\left|\,m\in\{1,\ldots, k\}:\,j_m=d\,\right|$.
\end{lemma}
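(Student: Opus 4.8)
The plan is to prove the identity by induction on $k$, feeding on just two inputs: Theorem~\ref{thm: D_f}, which in the notation of this section reads $\Gamma_d h=\lambda\,\Gamma_a h$ for every nonnegative Borel function $h$ on $\mathbb{R}_+$, and the homogeneity $\Gamma_i(\epsilon h)=\epsilon\,\Gamma_i h$ for $i\in\{a,d\}$ and $\epsilon\geqslant0$, already recorded above as a consequence of the linearity of expectation. The atomic case $k=1$, namely $\Gamma_{j_1}f_1=\lambda^{\textbf{1}_{\{j_1=d\}}}\Gamma_a f_1$, is a tautology when $j_1=a$ and is exactly Theorem~\ref{thm: D_f} when $j_1=d$; I take it as the base of the induction.

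For the inductive step, assume the identity for all index tuples of length $k-1$, and let $\textbf{j}=(j_1,\dots,j_k)$ and $f_1,\dots,f_k$ be given. First note that $\Gamma_a f_k$ is again a nonnegative Borel function of the starting level: measurability is read off the explicit formula of Theorem~\ref{thm: basic}, $\Gamma_a f_k(x)=-\varrho^{-1}x\,\mathbb{E}f_k(\zeta+xU)$. Hence $\widetilde f_{k-1}\equiv f_{k-1}\cdot\Gamma_a f_k$ is a legitimate nonnegative Borel function. Applying Theorem~\ref{thm: D_f} to the innermost operator gives $\Gamma_{j_k}f_k=\lambda^{\textbf{1}_{\{j_k=d\}}}\Gamma_a f_k$, so that $f_{k-1}\cdot\Gamma_{j_k}f_k=\lambda^{\textbf{1}_{\{j_k=d\}}}\widetilde f_{k-1}$, and commuting the resulting factor $\lambda^{\textbf{1}_{\{j_k=d\}}}$ out through $\Gamma_{j_{k-1}},\dots,\Gamma_{j_1}$ by homogeneity yields
\[
\Gamma_{j_1}f_1\cdots\Gamma_{j_{k-1}}f_{k-1}\Gamma_{j_k}f_k
=\lambda^{\textbf{1}_{\{j_k=d\}}}\,\Gamma_{j_1}f_1\cdots\Gamma_{j_{k-1}}\widetilde f_{k-1}.
\]
The right-hand side is a length-$(k-1)$ expression in the functions $f_1,\dots,f_{k-2},\widetilde f_{k-1}$ with index tuple $(j_1,\dots,j_{k-1})$, so the induction hypothesis rewrites it as $\lambda^{\textbf{1}_{\{j_k=d\}}+u_{(j_1,\dots,j_{k-1})}}\Gamma_a f_1\cdots\Gamma_a f_{k-2}\Gamma_a\widetilde f_{k-1}$. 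Since $\textbf{1}_{\{j_k=d\}}+u_{(j_1,\dots,j_{k-1})}=u_{\textbf{j}}$ and, by the very same notational conventions, $\Gamma_a\widetilde f_{k-1}=\Gamma_a\bigl(f_{k-1}\Gamma_a f_k\bigr)=\Gamma_a f_{k-1}\Gamma_a f_k$, this equals $\lambda^{u_{\textbf{j}}}\Gamma_a f_1\Gamma_a f_2\cdots\Gamma_a f_k$, which is the asserted right-hand side (for $k=2$ one simply unfolds $\Gamma_{j_1}\bigl(f_1\cdot\Gamma_{j_2}f_2\bigr)$ directly in the same way).

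The step I expect to require genuine care is not the bookkeeping of the $\lambda$-exponents — that splitting is immediate — but the repeated use of $\Gamma_i(\epsilon h)=\epsilon\,\Gamma_i h$ when the factor $\epsilon$ is a power of $\lambda$ and $\lambda$ (or some intermediate $\Gamma_a f_m$) equals $+\infty$, in which case $\epsilon$ is not a bona fide real scalar. One must check that each such manipulation is consistent with the convention $0\cdot\infty=0$ inherited from Theorem~\ref{thm: D_f}. This holds because, for a nonnegative Borel $h$, one has $\Gamma_a h(x)=0$ precisely when $h\circ W=0$ Lebesgue-almost everywhere on $[0,\tau]$, $\mathbb{P}$-a.s., which is exactly the situation in which multiplying by $+\infty$ before or after applying $\Gamma_a$ returns $0$ either way; hence the identity propagates through the induction without ambiguity. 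Apart from this, the argument is exactly the recursive application of the identity $g\Gamma_d(\epsilon f)=\epsilon\lambda\,g\Gamma_a f$ indicated just before the statement.
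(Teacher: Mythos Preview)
Your proof is correct and follows essentially the same route as the paper, which merely states that the lemma is obtained by ``a recursive application'' of the identity $g\Gamma_d(\epsilon f)=\epsilon\lambda\, g\Gamma_a f$ without spelling out the induction. Your added care about measurability of $\Gamma_a f_k$ and about the $0\cdot\infty$ convention when $\lambda=\infty$ goes beyond what the paper provides and is welcome; the argument is otherwise exactly the intended recursive unfolding.
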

Lemma \ref{lemma: recursion} is crucial in order to prove the following theorem.
In the sequel we denote the set of all permutations of $\{1,2,3,\ldots,k\}$ by $\pi[1:k]$.

\begin{theorem}\label{thm: Gamma}
    Let $\ell,k\in\mathbb{N}_0$ such that $k+\ell\in{\mathbb N}$. In addition, assume that $f_1,f_2,\ldots,f_{k+\ell}$ are nonnegative Borel functions on $\mathbb{R}_+$. Then, 
    \begin{equation}\label{eq: product}
    \mathbb{E}\Big[\prod_{i=1}^k\prod_{j=1}^\ell A_{f_i}D_{f_{k+j}}\Big]=\ \lambda^\ell\sum_{\pi\in\pi[1:k+\ell]}\int_0^\infty\Gamma_af_{\pi(1)}\Gamma_af_{\pi(2)}\Gamma_af_{\pi(3)}\Gamma_a\ldots\Gamma_af_{\pi(k+\ell)}(x)\,{\rm d}G_V(x) \,.
    \end{equation}
\end{theorem}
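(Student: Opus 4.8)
The plan is to prove \eqref{eq: product} by induction on $k+\ell$, exploiting the strong Markov property of $W(\cdot)$ at its up-jump epochs (equivalently, at the record times of the reflected process) together with the first-moment identities of Section~\ref{sec: first moment}. The base case $k+\ell=1$ is exactly Theorem \ref{thm: basic} (when the single functional is an $A$) or Theorem \ref{thm: D_f} combined with Theorem \ref{thm: basic} (when it is a $D$), after integrating against ${\rm d}G_V$. For the inductive step, the idea is to single out one factor, say $A_{f_{1}}$ (the case of a $D$-factor being handled afterwards via Theorem \ref{thm: D_f}, which introduces the power of $\lambda$), and write it as a time integral over $[0,\tau]$. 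Conditioning on $\mathcal F_s$ for each $s\le\tau$ and using the strong Markov property, the path of $W(\cdot)$ after time $s$, run until $\tau$, is again a copy of the whole process started from the level $W(s)$; the remaining factors $\prod_{i\ge 2}A_{f_i}\prod_j D_{f_{k+j}}$ split, via multilinearity of the product and the regenerative structure, into contributions indexed by which of these factors accumulate their mass \emph{before} time $s$ and which \emph{after}. Carrying this out carefully yields a recursion that expresses the $(k+\ell)$-fold moment as $\Gamma_a\big(f_{1}\cdot(\text{an }(k+\ell-1)\text{-fold expression})\big)$ summed over the ways of distributing the other functionals, which is precisely the nested structure on the right-hand side of \eqref{eq: product}.

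Concretely, I would first establish the key recursive identity
\begin{equation}\label{eq: plan-recursion}
\mathbb E\Big[A_{f_1}\prod_{i=2}^{k}A_{f_i}\prod_{j=1}^{\ell}D_{f_{k+j}}\Big]
=\sum_{S}\ \mathbb E\Big[\int_0^{\tau} f_1(W(s))\,\mathbb E^{W(s)}\!\Big[\prod_{i\in S\cap[2,k]}A_{f_i}\prod_{j:\,k+j\in S}D_{f_{k+j}}\Big]\,{\rm d}s\cdot(\cdots)\Big],
\end{equation}
where the sum is over the relevant partitions of the index set $\{2,\dots,k+\ell\}$ recording the "past'' and "future'' functionals relative to the running time $s$; here $\mathbb E^{x}$ denotes expectation under $W=W_x$. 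The precise combinatorial bookkeeping is what produces the sum over all permutations $\pi\in\pi[1:k+\ell]$: each permutation corresponds to a linear order in which the $k+\ell$ functionals are "peeled off'', and the nested composition $\Gamma_af_{\pi(1)}\Gamma_af_{\pi(2)}\cdots\Gamma_af_{\pi(k+\ell)}$ is exactly the iterated conditional expectation obtained by peeling in that order. Once \eqref{eq: plan-recursion} is in place, one applies Theorem \ref{thm: basic} to the outer $A_{f_1}$-integral (identifying the conditional expectation as the function $x\mapsto\Gamma_af_{\pi(1)}(\cdots)(x)$, evaluated at a random starting level distributed as $\zeta+xU$), and the induction hypothesis to the inner expectation, together with Lemma \ref{lemma: recursion} and Theorem \ref{thm: D_f} to absorb every $D$-factor into a factor $\lambda$ — accounting for the prefactor $\lambda^{\ell}$ — and to convert every $\Gamma_d$ into a $\Gamma_a$.

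The main obstacle, I expect, is making the Markov/regenerative splitting in \eqref{eq: plan-recursion} fully rigorous: one must justify interchanging the product of integrals over $[0,\tau]^{k+\ell}$ with expectation (Fubini/Tonelli, which is legitimate here since all $f_i\ge 0$, so everything is $[0,\infty]$-valued and the finiteness of $\mathbb E\tau$ and of the relevant moments — guaranteed by Corollary \ref{cor: finite condition} when the $f_i$ are polynomial — keeps the quantities finite), and then decompose the product domain $[0,\tau]^{k+\ell}$ according to the order of the $k+\ell$ time variables; the chamber where $s_{\pi(1)}<s_{\pi(2)}<\dots<s_{\pi(k+\ell)}$ contributes, after repeated use of the strong Markov property at $s_{\pi(1)},s_{\pi(2)},\dots$, exactly the nested term indexed by $\pi$. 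A secondary technical point is that the "up-jump'' functional $D_f$ is not literally an integral over $[0,\tau]$, so for the $D$-factors one should instead invoke Theorem \ref{thm: D_f} at the outset to replace each $D_{f_{k+j}}$ by a $\lambda A_{f_{k+j}}$-type contribution inside the nested expectations — this is what Lemma \ref{lemma: recursion} is designed to formalize — reducing the whole computation to one involving only $A$-functionals, for which the time-integral decomposition above applies cleanly. Boundary cases ($k=0$ or $\ell=0$) are covered by the same argument with the corresponding product empty.
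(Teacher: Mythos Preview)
Your proposal is correct, and its core mechanism --- decomposing the product integral over $[0,\tau]^{k+\ell}$ into chambers indexed by the ordering $s_{\pi(1)}<\dots<s_{\pi(k+\ell)}$ and then peeling off layers via the strong Markov property --- is exactly the paper's approach. A few streamlining remarks. (i)~The paper does not use induction: it computes each ordered-integral term $E_\pi$ directly by conditioning successively on $W(t_{k+\ell-1}),W(t_{k+\ell-2}),\dots$ from the inside out, so your before/after splitting and the associated subset bookkeeping are unnecessary overhead. (ii)~The $D$-factors \emph{are} integrals over $[0,\tau]$, namely $D_f=\int_0^\tau f(W(t-))\,{\rm d}N(t)$; the paper carries the mixed integrators ${\rm d}t$ and ${\rm d}N(t)$ through the chamber decomposition uniformly, and only at the very end invokes Lemma~\ref{lemma: recursion} to convert every resulting $\Gamma_d$ into $\lambda\,\Gamma_a$. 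You cannot apply Theorem~\ref{thm: D_f} ``at the outset,'' since $D_f\neq\lambda A_f$ as random variables; it enters only through Lemma~\ref{lemma: recursion} after the nested $\Gamma$'s have been formed. (iii)~Theorem~\ref{thm: basic} (the $\zeta+xU$ representation) plays no role in this proof; the outermost step is just the definition $\Gamma_a g(x)=\mathbb{E}_x A_g$ applied to the composite function $g$.
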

\begin{proof} It is enough to provide the proof under the assumption that $V=x$, $\mathbb{P}$-a.s., i.e.,  
\begin{equation}\label{eq: condition x}
    G_V(y)\equiv  \begin{cases} 
      1 & y\geqslant x \\
      0 & y<x 
   \end{cases}\ \ , \ \ \forall y\in\mathbb{R}
\end{equation}
for some constant $x\in(0,\infty)$, as the more general case will follow via conditioning and un-conditioning on $V$. For brevity, we denote the expectation under the assumption \eqref{eq: condition x} by $\mathbb{E}_x$. In addition, for $i\in\{1,\ldots,k+\ell\}$, we denote, for $ t\geqslant0$,
\begin{equation}
    F_i(t)\equiv \begin{cases} 
      t, & i\in\{1,\ldots, k\}\,, \\
      N(t), &i\in\{k+1,\ldots, k+\ell\} \,,
   \end{cases}\ \ 
\end{equation}
where $N(\cdot)$ is defined in \eqref{eq: N def} as the arrival process of the jumps of the process $W(\cdot)$. We also introduce, for $i\in\{1,\ldots,k+\ell\}$,
\begin{equation}
    e_i\equiv \begin{cases} 
      a, & i\in\{1,\ldots, k\}\,, \\
      d,&i\in\{k+1,\ldots, k+\ell\} \,.
   \end{cases}\,
\end{equation} 
Now, observe that
 \begin{equation}
\mathbb{E}_x\Big[\prod_{i=1}^k\prod_{j=1}^\ell A_{f_i}D_{f_{k+j}}\Big]=\sum_{\pi\in\pi[1:k+\ell]} E_\pi\,,
 \end{equation}
 where 
 \begin{equation}
E_\pi\equiv\mathbb{E}_x\int_0^\tau\int_{t_1}^{\tau}\int_{t_2}^\tau\ldots\int_{t_{k+\ell-1}}^\tau\prod_{i=1}^{k+\ell}f_{\pi(i)}\circ W(t_i)\bigotimes_{i=1}^{k+\ell}{\rm d}F_{\pi(i)}(t_i)
 \end{equation}
 for each $\pi\in\pi[1:k+\ell]$. Consider some $\pi\in\pi[1:k+\ell]$
and since $W(\cdot)$ is a Markov process, conditioning on $W(t_{k+\ell-1})$ reveals that $E_\pi$ is equal to
\begin{align}
\mathbb{E}_x\int_0^\tau\int_{t_1}^{\tau}\int_{t_2}^\tau\ldots\int_{t_{k+\ell-2}}^\tau f_{\pi(k+\ell-1)}\Gamma_{e_{\pi(k+\ell)}}f_{\pi(k+\ell)}\circ W(t_{k+\ell-1})\prod_{i=1}^{k+\ell-2}f_{\pi(i)}\circ W(t_i)\bigotimes_{i=1}^{k+\ell-1}{\rm d}F_{\pi(i)}(t_i)\,.
 \end{align}
 Therefore, by conditioning on $W(t_{k+\ell-2})$, we find that $E_\pi$ is equal to
 \begin{align}
\nonumber\mathbb{E}_x\int_0^\tau\int_{t_1}^{\tau}\int_{t_2}^\tau\ldots\int_{t_{k+\ell-3}}^\tau f_{\pi(k+\ell-2)}\Gamma_{e_{\pi(k+\ell-1)}}f_{\pi(k+\ell-1)}&\Gamma_{e_{\pi(k+\ell)}}f_{\pi(k+\ell)}\circ W(t_{k+\ell-2})\\&\prod_{i=1}^{k+\ell-3}f_{\pi(i)}\circ W(t_i)\bigotimes_{i=1}^{k+\ell-2}{\rm d}F_{\pi(i)}(t_i)\,.
\end{align}
Along the same lines, we can condition sequentially on $W(t_{k+l-3}),W(t_{k+l-4}),\ldots, W(t_2)$, so as to eventually obtain
\begin{align}
    E_\pi&={\mathbb E}_x\int_0^\tau f_{\pi(2)}\Gamma_{e_{\pi(2)}}f_{\pi(3)}\Gamma_{e_{\pi(3)}}\ldots\Gamma_{e_{\pi(k+\ell)}}f_{\pi(k+\ell)}\circ W(t-)f_{\pi(1)}\circ W(t)\,{\rm d}F_{\pi(1)}(t)\\&\nonumber=\Gamma_{e_{\pi(1)}}f_{\pi(1)}f_{\pi(2)}\Gamma_{e_{\pi(2)}}f_{\pi(3)}\Gamma_{e_{\pi(3)}}\ldots\Gamma_{e_{\pi(k+\ell)}}f_{\pi(k+\ell)}(x)\\&\nonumber=\lambda^\ell\Gamma_af_{\pi(1)}f_{\pi(2)}\Gamma_a f_{\pi(3)}\Gamma_a\ldots\Gamma_a f_{\pi(k+\ell)}(x)\,,
\end{align}
where the last equality follows by applying Lemma \ref{lemma: recursion}. 
\end{proof}

\begin{remark}\normalfont
In particular, if $f_i(\cdot)\equiv f(\cdot)$ for all $i=1,\ldots,k+\ell$, then \eqref{eq: product} is equal to 
    \begin{equation}
        \mathbb{E}\big[A^k_f\,D_f^\ell\big]=\ \lambda^\ell(k+\ell)!\int_0^\infty\Gamma_af\Gamma_af\Gamma_af\Gamma_a\ldots\Gamma_af(x)\,{\rm d}G_V(x) \,, \end{equation}
where the integrand on the right-hand side is a $(k+\ell)$-fold composition. 
\end{remark}
\begin{remark}\normalfont
    Note that in practice, to apply Theorem \ref{thm: Gamma}, one must know how to compute $\Gamma_if(x)$ for every $i\in\{a,d\}$ and $x$. The tools needed for these computations have been provided in Section \ref{sec: first moment}. A demonstration of these tools, in the context of Theorem \ref{thm: Gamma}, is provided in Section \ref{sec: demonstration}. 
\end{remark}

As before, when specializing to polynomial $f(\cdot)$, the objects of interest simplify drastically.
In order to compute $\Gamma_a\mathcal{P}_{k_m}\Gamma_a\mathcal{P}_{k_{m-1}}\ldots\Gamma_a\mathcal{P}_{k_1}(x)$,  Corollary \ref{cor: polynomial} can be applied recursively. Indeed,  Corollary~\ref{cor: polynomial} implies that
\begin{equation}\label{eq: two iterations}
    \mathcal{P}_{k_2}\Gamma_a\mathcal{P}_{k_1}(x)=-\frac{1}{\varrho}\sum_{i_1=0}^{k_1}\binom{k_1}{i_1}\frac{(-1)^{i_1}\psi_{i_1}}{k_1-i_1+1}x^{k_1+k_2+1-i_1}
\end{equation}
and hence an additional application of Corollary \ref{cor: polynomial} yields that $\Gamma_a\mathcal{P}_{k_2}\Gamma_a\mathcal{P}_{k_1}(x)$ equals
\begin{equation}\label{eq: polynomial22}
\frac{1}{\varrho^2}\sum_{i_1=0}^{k_1}\binom{k_1}{i_1}\frac{(-1)^{i_1}\psi_{i_1}}{k_1-i_1+1}\sum_{i_2=0}^{k_1+k_2+1-i_1}\binom{k_1+k_2+1-i_1}{i_2}\frac{(-1)^{i_2}\psi_{i_2}}{k_1+k_2-i_1-i_2+2}x^{k_1+k_2-i_1-i_2+2}\,.
\end{equation}

Iterating this computation gives the following more general result.
\begin{corollary}\label{cor: polynomial2}
Let $m\in{\mathbb N}$ and consider $k_1,k_2,\ldots,k_m\in\mathbb{N}_0$. Then, for  $x>0$,
\begin{align}\Gamma_a\mathcal{P}_{k_m}\Gamma_a\mathcal{P}_{k_{m-1}}\ldots\Gamma_a\mathcal{P}_{k_1}(x)\end{align} is equal to 
\begin{align}
    (-\varrho)^{-m}\sum_{i_1=0}^{k_1}&\binom{k_1}{i_1}\frac{(-1)^{i_1}\psi_{i_1}}{k_1-i_1+1}\sum_{i_2=0}^{k_1+k_2-i_1+1}\binom{k_1+k_2-i_1+1}{i_2}\frac{(-1)^{i_2}\psi_{i_2}}{k_1+k_2-i_1-i_2+2}\ldots\\&\nonumber\ldots\sum_{i_m=0}^{k_m+\sum_{j=1}^{m-1}(k_j+1-i_j)}\binom{k_m+\sum_{j=1}^{m-1}(k_j+1-i_j)}{i_m}\frac{(-1)^{i_m}\psi_{i_m}}{\sum_{j=1}^m(k_j+1-i_j)}x^{\sum_{j=1}^m(k_j+1-i_j)}\,.
\end{align}
\end{corollary}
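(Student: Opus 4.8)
The plan is to prove the corollary by induction on $m$, invoking at each stage the one-variable formula of Corollary~\ref{cor: polynomial} in the form appropriate to a deterministic initial push. The basic building block is the following single-step rule. By Remark~\ref{rem:R1}, for a deterministic push $V=x>0$ and any nonnegative Borel $f$ one has $\Gamma_af(x)=\mathbb{E}A_f(x)=-\varrho^{-1}x\,\mathbb{E}f(\zeta+xU)$, with $\zeta$ and $U$ as in Theorem~\ref{thm: basic}. Taking $f=\mathcal{P}_n$, the binomial theorem together with $\mathbb{E}\zeta^i=(-1)^i\psi_i$ and $\mathbb{E}U^{\,n-i}=(n-i+1)^{-1}$ would give
\[
  \Gamma_a\mathcal{P}_n(x)=-\frac{1}{\varrho}\sum_{i=0}^{n}\binom{n}{i}\frac{(-1)^i\psi_i}{n-i+1}\,x^{\,n-i+1},
\]
which is exactly the $m=1$ case of the corollary.

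Next I would extend this to polynomial arguments: by linearity of the expectation, if $h$ is any polynomial then $\Gamma_a h$ is again a polynomial, obtained by applying the rule above to each monomial of $h$ and summing. I would also record the preliminary fact that every intermediate function $P_j(x)\equiv\Gamma_a\mathcal{P}_{k_j}\Gamma_a\mathcal{P}_{k_{j-1}}\cdots\Gamma_a\mathcal{P}_{k_1}(x)$ is a nonnegative polynomial on $\mathbb{R}_+$ — nonnegative because it is an expectation of a nonnegative functional, polynomial by a trivial induction using the single-step rule — so that Theorem~\ref{thm: basic}/Remark~\ref{rem:R1} legitimately applies at every stage. Should some $\psi_i$ entering the expansion be infinite, all the identities remain valid in $[0,\infty]$ by nonnegativity and monotone convergence.

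For the inductive step, assume the asserted expression holds for $P_{m-1}$. Since $P_m=\Gamma_a(\mathcal{P}_{k_m}P_{m-1})$ and $\mathcal{P}_{k_m}P_{m-1}$ is the polynomial $x\mapsto x^{k_m}P_{m-1}(x)$, the induction hypothesis writes $\mathcal{P}_{k_m}P_{m-1}$ as an explicit linear combination of monomials $x^{N}$ with $N=k_m+\sum_{j=1}^{m-1}(k_j+1-i_j)$, the coefficients read off from the hypothesis. Applying $\Gamma_a$ term by term via the single-step rule, the monomial $x^N$ contributes
\[
  -\frac{1}{\varrho}\sum_{i_m=0}^{N}\binom{N}{i_m}\frac{(-1)^{i_m}\psi_{i_m}}{N-i_m+1}\,x^{\,N-i_m+1}.
\]
The remainder is bookkeeping: $N$ is precisely the upper summation limit $k_m+\sum_{j=1}^{m-1}(k_j+1-i_j)$ appearing in the statement, the telescoping identity $N-i_m+1=\sum_{j=1}^{m}(k_j+1-i_j)$ matches both the denominator and the power of $x$, $\binom{N}{i_m}$ matches the last binomial coefficient, and $(-\varrho)^{-(m-1)}\cdot(-\varrho^{-1})=(-\varrho)^{-m}$, closing the induction.

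I do not expect a genuine obstacle here; the statement is a routine induction. The only point requiring care is the nested index bookkeeping — keeping the upper limits $N=k_m+\sum_{j=1}^{m-1}(k_j+1-i_j)$ correct through each relabelling so that $N-i_m+1=\sum_{j=1}^m(k_j+1-i_j)$ comes out right — together with the one-line observation that $\Gamma_a$ acts linearly on the finite-dimensional space of polynomials, which is what licenses applying the single-monomial rule coefficient by coefficient.
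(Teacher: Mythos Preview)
Your proposal is correct and follows essentially the same approach as the paper: the paper derives the $m=1$ and $m=2$ cases explicitly by applying Corollary~\ref{cor: polynomial} (in its deterministic-push form), and then simply states that ``iterating this computation gives the following more general result.'' Your induction on $m$ formalizes exactly this iteration, with the single-step rule $\Gamma_a\mathcal{P}_n(x)=-\varrho^{-1}\sum_{i=0}^{n}\binom{n}{i}(-1)^i\psi_i\,(n-i+1)^{-1}x^{n-i+1}$ applied termwise to the polynomial $\mathcal{P}_{k_m}P_{m-1}$.
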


\begin{remark}\normalfont \label{R7}
    Importantly, Theorem \ref{thm: Gamma} and Corollary \ref{cor: polynomial2} are also useful in order to compute \textit{joint} moments of polynomial functionals. For example, in Section \ref{sec: demonstration}, it is demonstrated how to apply them (with $f_1\equiv1$ and $f_2={\rm Id}$) in order to compute the joint moments of $\tau$ and $A\equiv\int_0^\tau W(t)\,{\rm d}t$ when $J(\cdot)$ is a CPP with nonnegative jumps and $V$ has the jump distribution of $J(\cdot)$.  
\end{remark}
\section{The process $x\mapsto A_f(x)$}\label{sec: cost process}
%This section studies the functional $A_f(x)$, to be interpreted as a stochastic process in $x$. Some interpretation of that process is as follows: 
{The functional $A_f(x)$ is determined by the choice of $f(\cdot)$ together with the value of the initial condition $x>0$. In this section, we fix $f(\cdot)$ and analyze the sensitivity of $A_f(x)$ with respect to $x$. Put differently, for given $f$ we study the properties of the stochastic process $x\mapsto A_f(x)$. A specific motivation for this sensitivity analysis lies in a storage system in which:}
\begin{enumerate}
   \item $W(t)$ is the amount of stored work at time $0\leqslant t\leqslant\ \tau$,
    \item $f(w)$ and is the holding cost of $w\geqslant 0$ workload units per an infinitesimal unit of time.
\end{enumerate}
\color{black} In addition, let this storage system have a controller who knows at time $t=0$ that the initial workload has the value $x_1\geqslant 0$. Suppose that this controller needs to decide whether or not to accept a new `external' job of size $x_2\geqslant 0$ at time $t=0$. We assume that it is impossible to introduce new external jobs during a busy period and hence the decision should be based on the distribution of the difference $A_f(x_1+x_2)-A_f(x_1)$ representing the  expected additional holding cost which is due to accepting the new job. \color{black} For a few examples of papers on similar functionals, playing a pivotal role in the context of solving various types of social optimization problems in service systems, we refer to e.g., \cite{Haviv1998,Haviv2018a,Haviv2018b,Jacobovic2022a}.
\color{black}

This example illustrates the relevance of studying the process $x\mapsto A_f(x)$. Note that this process has a clear economic interpretation: for each $x\geqslant 0$, $A_f(x)$ represents the total holding cost accumulated during the time interval $[0,\tau_x]$, i.e., during the first busy period. This interpretation explains that in the sequel we refer to it as the \textit{cost process}. For other examples of related cost processes associated with the workload process in an M/G/1 queue, along with their motivations and analysis, we refer to \cite{Jacobovic2024,Jacobovic2023, Jacobovic2022}. For a recent work on such cost processes in a more general framework, along with additional application areas, see \cite{Bodas2024}.
\color{black}

The objective of this section is to apply results from the previous sections to prove that for any polynomial $f(\cdot)$ the corresponding cost process has sample paths which are $\mathbb{P}$-a.s.\ Lipschitz continuous on $\mathbb{R}_+$. We also provide a method to compute the high-order moments of the finite-dimensional distributions of the cost process. Lastly, we discuss an integro-differential equation {which is related to the infinitesimal generator of $x\mapsto A_f(x)$ when $f(\cdot)={\rm Id}$, giving rise to alternative ways to evaluate the corresponding moments.}

\medskip

%For a given $x\geqslant 0$, we let $\tau_x$ be the first time that the process $W(t;x)\equiv x-J(t)$ hits the origin. In addition, for any nonnegative Borel function $f(\cdot)$, define 
%\begin{equation}
    %A_f(x)\equiv\int_0^{\tau_x}f\circ W(t;x)\,{\rm d}t\ \ , \ \ x\geqslant0
%\end{equation}
%and notice that $A_f(0)=0$. Notably, $x\mapsto A_f(x)$ is a stochastic process with a `time' index $x\geqslant0$ that can be interpreted as follows: for any $x>0$, $A_f(x)$ is distributed like the holding cost due to the system's operations until the end of a busy period with an initial workload of $x$ units. %In this part we will have a discussion about this process with the help of the results which were derived in the previous sections. For an analysis and more motivations of such cost processes in an M/G/1 setup see \cite{Jacobovic2023,Jacobovic2022}  
\color{black}
In the first step of our analysis of $A_f(\cdot)$ we derive the following decomposition of $A_f(x+y)$ (for some $y>0$):
\begin{align}\label{eq: cost decomposition}
   \color{black} A_f(x+y)=\int_0^{\tau_x}f\left[W_x(t)+y\right]{\rm d}t+\int_{0}^{\tau_{x+y}-\tau_x}f\left[x+y+J\left(\tau_x+t\right)\right]{\rm d}t\,.
\end{align}
\color{black} The validity of this decomposition can be seen via the following line of reasoning. First notice that $[0,\tau_{x+y}]=[0,\tau_x]\cup[\tau_x,\tau_{x+y}]$ and by definition $W_{x+y}(t)=y+W_x(t)$ for every $t\in[0,\tau_x]$. Therefore, deduce that
\begin{equation}
 \int_0^{\tau_x}f\left[W_{x+y}(t)\right]{\rm d}t=\int_0^{\tau_x}f\left[W_x(t)+y\right]{\rm d}t.   
\end{equation}  
In addition, 
$\tau_{x+y}-\tau_x\geqslant y$ and $W_{x+y}(\tau_x)=y$. Thus, as $W_{x+y}(t)=x+y+J(t)$ for every $t\in[0,\tau_{x+y}]$ and $J(\tau_x)=-x$, then a standard change in the integration variable implies that
\begin{equation}
    \int_{\tau_x}^{\tau_{x+y}}f\circ W_{x+y}(t){\rm d}t=\int_0^{\tau_{x+y}-\tau_x}f\left[x+y+J(\tau_x+t)\right]{\rm d}t,
\end{equation}
thus concluding \eqref{eq: cost decomposition}. 

\color{black}
Importantly, due to the strong Markov property of $J(\cdot)$ and the fact that $W_{x+y}(\tau_x)=y$ \color{black} (i.e., $x+J(\tau_x)=0$)\color{black}, the first term and the second term are independent entailing that the second term is distributed as $A_f(y)$. In the sequel, we restrict our attention to the case where $f(\cdot)$ is a polynomial; for simplicity of notation, for any $k\in{\mathbb Z}_+$ we denote (with mild abuse of notation) $A_k(\cdot)\equiv A_{\mathcal{P}_k}(\cdot)$. For this $f(\cdot)$, by the binomium, the first term in \eqref{eq: cost decomposition} simplifies to
\begin{equation}
    \sum_{i=0}^{k\color{black}}\binom{k}{i}y^{k-i}A_i(x)\ \ , \ \ x\geqslant0\,.
\end{equation}
As it turns out, this representation has several applications which are presented in the next subsections.  

\begin{remark}
    \normalfont In our setup $A_k(\cdot)$ is a cost process which is based on the cost functional $A_{\mathcal{P}_k}(\cdot)$. As is readily checked, all of the analysis to follow for $A_k(\cdot)$ can be translated to an analogue cost process $D_k(\cdot)$ which is based on the cost functional $D_{\mathcal{P}_k}(\cdot)$. %In view of the analysis which has been described so far, it is not surprising. 
\end{remark}

\subsection{Auto-covariance function}
If $\eta_{2k+2}<\infty$, then Corollary \ref{cor: finite condition}, in combination with \eqref{eq: polynomial22}, implies that the auto-covariance function of the process $A_k(\cdot)$ is well defined. It is given by 
\begin{equation}\label{eq: COV}
\text{Cov}\left[A_k(x_1),A_k(x_2)\right]=\mathbb{E}[A_k(x_1)A_k(x_2)]-\mathbb{E}A_k(x_1)\,\mathbb{E}A_k(x_2)\ \ , \ \ 0\leqslant x_1\leqslant x_2
\end{equation}
where the second term can be computed by applying Corollary \ref{cor: polynomial}. Now, it is to be explained how to utilize the previous results in order to compute the joint moment appearing in \eqref{eq: COV}. 

To this end, we introduce a random variable $\widetilde{A}_k(x_2-x_1)$ which is distributed as $A_k(x_2-x_1)$ and which is independent from $A_k(x_1)$. Then, the decomposition \eqref{eq: cost decomposition} implies that
\begin{align}
    \label{corr}\mathbb{E}[A_k(x_1)A_k(x_2)]&=\mathbb{E}\left[A_k(x_1)\Big(\sum_{i=0}^k\binom{k}{i}\color{black}(x_2-x_1)\color{black}^{k-i}A_i(x_1)+\widetilde{A}_k(x_2-x_1)\Big)\right]\\&\nonumber=\sum_{i=0}^{k}\binom{k}{i}\color{black}(x_2-x_1)\color{black}^{k-i}\mathbb{E}[A_k(x_1)A_i(x_1)]+\mathbb{E}A_k(x_2-x_1)\,\mathbb{E}A_k(x_1)\,.
\end{align}
Note that all of the expectations in the right-hand side of \eqref{corr} can be computed by the method which was developed in the previous sections. More generally, such an approach is useful in order to compute all the joint moments of the finite-dimensional distributions of $A_k(\cdot)$, i.e., the high-order moments of the coordinates of the random vector
\begin{equation}
    \big(A_k(x_1),A_k(x_2),\ldots,A_k(x_m)\big)
\end{equation}
for $0\leqslant x_1<x_2<\ldots<x_m<\infty$ and $m=2,3,\ldots$. 
%Since we do not have a specific application which requires these moments and also because the resulting formulae are very messy then it is mentioned in passing.

\subsection{Lipschitz continuity}
The next theorem states that the cost functional is \color{black}locally \color{black} Lipschitz continuous with respect to the initial workload $x\geqslant 0$. 
\begin{theorem}\label{thm: Lipschitz}
Let $k\in\mathbb{N}_+$ and assume that $\eta_{k+1}<\infty$. Then, $x\mapsto A_k(x)$ is $\mathbb{P}$-a.s.\ \color{black} locally \color{black} Lipschitz 
 continuous on $\mathbb{R}_+$.
\end{theorem}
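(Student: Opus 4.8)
The strategy is to leverage the decomposition \eqref{eq: cost decomposition}, which expresses, for $y>0$,
\begin{equation}
A_k(x+y)-A_k(x)=\sum_{i=0}^{k-1}\binom{k}{i}y^{k-i}A_i(x)+\widetilde A_k(y)\,,
\end{equation}
where $\widetilde A_k(y)$ is distributed as $A_k(y)$ and is independent of $\bigl(A_0(x),\ldots,A_k(x)\bigr)$. Fixing a compact interval $[0,b]$, it suffices to produce, on an event of full probability, a finite (random) constant $L_b$ such that $|A_k(x')-A_k(x)|\leqslant L_b|x'-x|$ for all $x,x'\in[0,b]$. The plan is to control the two pieces of the increment separately: the polynomial-in-$y$ sum $\sum_{i=0}^{k-1}\binom{k}{i}y^{k-i}A_i(x)$, whose coefficients $A_i(x)$ we must bound uniformly over $x\in[0,b]$; and the `fresh' contribution $\widetilde A_k(y)$, which must be shown to be $O(y)$ as $y\downarrow0$.

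First I would observe that $x\mapsto A_i(x)$ is $\mathbb{P}$-a.s.\ nondecreasing: for $x\leqslant x'$ we have $\tau_x\leqslant\tau_{x'}$ and $W_{x'}(t)=W_x(t)+(x'-x)\geqslant W_x(t)$ on $[0,\tau_x]$, so $\int_0^{\tau_x}W_x(t)^i\,{\rm d}t\leqslant\int_0^{\tau_{x'}}W_{x'}(t)^i\,{\rm d}t$ pathwise. Hence $\sup_{x\in[0,b]}A_i(x)=A_i(b)<\infty$ $\mathbb{P}$-a.s.\ (finiteness of $\tau_b$ holds $\mathbb{P}$-a.s.\ since $\mathbb{E}\tau_b<\infty$), and this immediately bounds the polynomial sum by $\sum_{i=0}^{k-1}\binom{k}{i}b^{k-1-i}A_i(b)\cdot y$ for $y\in[0,b]$. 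The monotonicity also reduces the whole problem to a one-sided Lipschitz estimate, since $A_k$ is itself nondecreasing, so $|A_k(x')-A_k(x)|=A_k(x')-A_k(x)$ for $x\leqslant x'$.

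The main obstacle is the term $\widetilde A_k(y)$: I must show $A_k(y)\leqslant C_y\, y$ $\mathbb{P}$-a.s.\ with $C_y$ bounded as $y\downarrow0$, and moreover make this hold simultaneously for a (dense, then all) set of increments so as to pass from pointwise to uniform Lipschitz control. The natural route is a pathwise bound: starting from level $y$, until $\tau_y$ the process satisfies $W_y(t)=y+J(t)$, and I would bound $A_k(y)=\int_0^{\tau_y}(y+J(t))^k\,{\rm d}t$ using that on $[0,\tau_y]$ one has $0\leqslant y+J(t)\leqslant y+\sup_{s\leqslant\tau_y}J(s)$. Since $J(\cdot)$ is spectrally positive with negative drift, an overshoot/ladder-height argument (or the representation of $\tau_y$ via the descending ladder structure, cf.\ the Pollaczek--Khinchine description in Theorem~\ref{thm: basic}) gives $\tau_y\to0$ and $\sup_{s\leqslant\tau_y}J(s)\to0$ $\mathbb{P}$-a.s.\ as $y\downarrow0$, with the right rates; combined with the moment bound $\mathbb{E}A_k(y)=O(y)$ coming from Corollary~\ref{cor: polynomial} (the leading term is $-\mu_1 y/\varrho$-type, linear in the deterministic initial level), one controls the fresh increments.

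To upgrade to a genuine modulus-of-continuity statement valid for \emph{all} $x,x'\in[0,b]$ simultaneously, I would apply the argument along the grid $x_n^{(j)}=jb/2^n$ and use a Borel--Cantelli / telescoping estimate: writing any increment $A_k(x')-A_k(x)$ as a sum of increments over dyadic subintervals and invoking the decomposition on each, one obtains $A_k(x')-A_k(x)\leqslant L_b(x'-x)$ with $L_b$ depending only on $A_0(b),\ldots,A_k(b)$ and on a $\mathbb{P}$-a.s.\ finite random constant governing the $\widetilde A_k$ contributions; a standard Kolmogorov-continuity-type argument, using that $\mathbb{E}[\widetilde A_k(y)]^m=O(y^m)$ for some $m>1$ (again via Theorem~\ref{thm: Gamma} / Corollary~\ref{cor: polynomial2}, which give all moments of $A_k(y)$ as polynomials in $y$ with no constant term), closes the gap. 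The assumption $\eta_{k+1}<\infty$ enters precisely to guarantee, through Corollary~\ref{cor: finite condition}, that $\mathbb{E}A_k(y)<\infty$ (indeed all the needed moments are finite), which is what makes these moment bounds available.
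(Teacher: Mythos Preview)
Your decomposition and monotonicity observations are sound, but the control of the ``fresh'' term $\widetilde A_k(y)$ fails. The claim $\mathbb{E}[\widetilde A_k(y)]^m=O(y^m)$ is false: by Corollary~\ref{cor: polynomial2} (with deterministic initial level~$y$), $\mathbb{E}A_k^m(y)$ is a polynomial in $y$ whose lowest-order term is \emph{linear} in~$y$---the exponent $\sum_j(k+1-i_j)$ attains the value~$1$, with a generically nonzero coefficient. Already $\mathbb{E}\tau_y^2=y^2/\varrho^2-2\psi_1 y/\varrho^2=\Theta(y)$ illustrates this. Hence Kolmogorov's criterion, which needs an increment moment bounded by $Cy^{1+\epsilon}$ for some $\epsilon>0$, is unavailable, and your Borel--Cantelli/dyadic scheme cannot close. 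There is a second, related issue: under only $\eta_{k+1}<\infty$ you are guaranteed $\mathbb{E}A_k(y)<\infty$ but not $\mathbb{E}A_k^m(y)<\infty$ for $m\geqslant2$; the latter needs $\psi_j$ finite up to $j=m(k+1)-1$, i.e.\ $\eta_{m(k+1)}<\infty$ (Corollary~\ref{cor: finite condition}), strictly stronger than the theorem's hypothesis.

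The paper avoids both problems by working with the first moment only. For each fixed $x_0$ one argues by contradiction: if the pathwise $\limsup_{y\downarrow0}y^{-1}\int_0^{\tau_{x_0+y}-\tau_{x_0}}[x_0+y+J(\tau_{x_0}+t)]^k\,{\rm d}t$ were infinite on an event of positive probability, monotone convergence along a witnessing sequence $y_n\downarrow0$ would force $\mathbb{E}A_k(y_n)/y_n\to\infty$, contradicting the finite limit $(-1)^{k+1}\psi_k/\varrho$ furnished by Corollary~\ref{cor: polynomial}. This yields a random Lipschitz constant on a neighborhood of each $x_0$; a Heine--Borel/Lebesgue-number argument over rational $x_0\in[0,x]$ then patches these into a Lipschitz bound on $[0,x]$.
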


\begin{proof}
   Let $x$ be an arbitrary positive number. Also,  notice that, for any $0\leqslant x_0\leqslant x$,
   \begin{align}\label{eq: derivative upper bound}
       \color{black}\limsup_{y\downarrow0}\frac{A_k(x_0+y)-A_k(x_0)}{y}&\color{black}=kA_{k-1}(x_0)+\limsup_{y\downarrow0}\frac{1}{y}\int_{0}^{\tau_{x_0+y}-\tau_{x_0}}\left[\color{black}x_0+\color{black}y+J\left(\tau_{x_0}+t\right)\right]^k {\rm d}t\,.
   \end{align} In order to show that the $\limsup$ in right-hand side of \eqref{eq: derivative upper bound} is $\mathbb{P}$-a.s. finite, consider the event
   \begin{equation}
       \mathcal{E}(x_0)\equiv \left\{\omega\ :\ \limsup_{y\downarrow0}\frac{1}{y}\int_{0}^{\tau_{x_0+y}-\tau_{x_0}}\left[\color{black}x_0+\color{black}y+J\left(\tau_{x_0}+t\right)\right]^k {\rm d}t=\infty\right\}
   \end{equation}
   and its complement by $\mathcal{E}^c(x_0)$. Notably, for every $\omega\in\mathcal{E}(x_0)$, there is a sequence $y_n\equiv y_n(\omega,x_0)$ with  $n\in{\mathbb N}$ such that $y_n\downarrow0$ as $n\to\infty$ and
   \begin{equation}
       0\leqslant\frac{1}{y_n}\int_{0}^{\tau_{x_0+y_n}-\tau_{x_0}}\left[\color{black}x_0+\color{black}y_n+J\left(\tau_{x_0}+t\right)\right]^k {\rm d}t\,\uparrow\,\infty\ \ \text{as} \ \ n\to\infty\,.
   \end{equation}
Therefore, if $\mathbb{P}\left\{\mathcal{E}(x_0)\right\}>0$, the monotone convergence theorem yields that
\begin{align}\label{eq: contradiction} \mathbb{E}\Big[\frac{1}{y_n}&\int_{0}^{\tau_{x_0+y_n}-\tau_{x_0}}\left[\color{black}x_0+\color{black}y_n+J\left(\tau_{x_0}+t\right)\right]^k {\rm d}t\Big]\\&\geqslant\mathbb{E}\Big[\frac{\textbf{1}_{\mathcal{E}(x_0)}}{y_n}\int_{0}^{\tau_{x_0+y_n}-\tau_{x_0}}\left[\color{black}x_0+\color{black}y_n+J\left(\tau_{x_0}+t\right)\right]^k {\rm d}t\Big]\rightarrow\infty\ \ \text{as}\ \ n\to\infty\,.  \nonumber \end{align}
On the other hand, the assumption $\eta_{k+1}<\infty$, in combination with  Corollaries \ref{cor: polynomial} and  \ref{cor: finite condition}, imply that, for any $x, y>0$,
\begin{align}
    \mathbb{E}\Big[\frac{1}{y}\int_{0}^{\tau_{x+y}-\tau_x}&\left[\color{black}x+y +J\left(\tau_x+t\right)\right]^k {\rm d}t\Big]=\frac{\mathbb{E}A_k(y)}{y}\\&=\nonumber-\frac{1}{\varrho}\sum_{i=0}^k\binom{k}{i}\frac{y^{k-i}}{k-i+1}(-1)^i\psi_i\xrightarrow{y\downarrow0}\frac{(-1)^{k+1}\psi_k}{\varrho}<\infty\,.
\end{align}
As a result, a contradiction to \eqref{eq: contradiction} is obtained, and hence $\mathbb{P}\{\mathcal{E}(x_0)\}=0$. Therefore, there exists (random) $K_{x_0},\delta_{x_0}\in(0,\infty)$ such that $\mathbb{P}\text{-a.s.}$,
\begin{equation}
    |A_k(z_1)-A_k(z_2)|<K_{x_0}|z_2-z_1|
\end{equation}
for every $z_1$ and $z_2$ which belong to a set $B(x_0)\equiv B_{\delta_{x_0}}(x_0)\equiv(x_0-\delta_{x_0},x_0+\delta_{x_0})$. Now, notice that $\cup_{x_0\in\mathbb{Q}\cap[0,x]}B(x_0)$ is an open cover of $[0,x]$. Therefore, the Heine-Borel theorem implies that there are $m\in\mathbb{N}$ and $q_1,q_2,\ldots,q_m$ in $\mathbb{Q}\cap[0,x]$ such that  $\cup_{i=1}^mB(q_i)=[0,x]$. In addition, let $\beta$ be the Lebesgue number of the open cover $\cup_{i=1}^mB(q_i)$. Now, for every $0\leqslant z_1<z_2\leqslant x$ there are numbers $s\in\{2,3,\ldots\}$ and $z_1=u_1\leqslant u_2\leqslant u_3\leqslant\ldots\leqslant u_{s}=z_2$ such that $u_{j+1}-u_j\leqslant\beta$ for every $j\in\{1, \ldots, s-1\}$. As a result, with $K\equiv\max\left\{K(q_i):i\in\{1,\ldots, m\}\right\}$, on the event $\mathcal{E}\equiv\cap_{i=1}^m\mathcal{E}^c(q_i)$ we have 
\begin{align}
    A_k(z_2)-A_k(z_1)&\leqslant\sum_{j=1}^{s-1}\left[A_k(u_{j+1})-A_k(u_{j})\right]\leqslant K\sum_{j=1}^ {s-1}\left(u_{j+1}-u_j\right)\leqslant K(z_2-z_1)\,.
\end{align}
Therefore, as $\mathbb{P}\{\mathcal{E}\}=1$, we know that $A_k(\cdot)$ is Lipschitz continuous on $[0,x]$, $\mathbb{P}$-a.s. Finally, the result follows since $\mathbb{R}_+=\cup_{x=1}^\infty[0,x]$.   
\end{proof}
Since Lipschitz continuity implies differentiability a.e., the following corollary is an immediate consequence of Theorem \ref{thm: Lipschitz}.
    \begin{corollary}
    $\mathbb{P}$-a.s. the limit    \begin{equation}
    L(x)\equiv\lim_{y\downarrow0}\frac{1}{y}\int_{0}^{\tau_{x+y}-\tau_{x}}\left[\color{black}x+\color{black}y+J\left(\tau_{x}+t\right)\right]^k {\rm d}t    \end{equation}
    exists a.e.\ on $\mathbb{R}_+$. Furthermore, wherever this limit exists, $A_k(\cdot)$ is $\mathbb{P}$-a.s.\ differentiable with the derivative satisfying
    \begin{equation}
        A'_k(x)=kA_{k-1}(x)+L(x)\,.
    \end{equation}
    \end{corollary}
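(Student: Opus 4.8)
The plan is to obtain the corollary directly from Theorem~\ref{thm: Lipschitz} together with the pathwise decomposition \eqref{eq: cost decomposition}, after which only an elementary rearrangement of a difference quotient remains; no new estimates are needed. First I would fix a full-probability event $\Omega_0$ on which two things hold simultaneously: (i)~the map $x\mapsto A_k(x)$ is locally Lipschitz on $\mathbb{R}_+$ -- this is exactly the content of Theorem~\ref{thm: Lipschitz}, available because $\eta_{k+1}<\infty$ (hence also $\eta_k<\infty$) is assumed; and (ii)~$\tau_x<\infty$ for every $x\geqslant 0$, which follows from $\mathbb{E}\tau_x<\infty$ for each rational $x$ together with the pathwise monotonicity of $x\mapsto\tau_x$. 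On $\Omega_0$, each sample path $t\mapsto W_x(t)$ is c\`adl\`ag on the compact interval $[0,\tau_x]$ and hence bounded there, so that $A_i(x)=\int_0^{\tau_x}W_x(t)^i\,{\rm d}t<\infty$ for all $i\in\{0,1,\ldots,k\}$ and all $x\geqslant 0$; I will use this finiteness below.

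Next I would specialize \eqref{eq: cost decomposition} to $f=\mathcal{P}_k$ and plug in the binomial expansion of its first term (the display recorded just below \eqref{eq: cost decomposition}). Subtracting $A_k(x)$ from both sides -- which cancels the $i=k$ term of the expansion -- and dividing by $y>0$ gives, pathwise on $\Omega_0$ and for every $x,y>0$,
\begin{equation*}
\frac{A_k(x+y)-A_k(x)}{y}=\sum_{i=0}^{k-1}\binom{k}{i}y^{k-1-i}A_i(x)+\frac{1}{y}\int_0^{\tau_{x+y}-\tau_x}\left[x+y+J(\tau_x+t)\right]^k{\rm d}t\,.
\end{equation*}
Letting $y\downarrow 0$, every summand with $i<k-1$ carries a strictly positive power of $y$ and so vanishes (the coefficients $A_i(x)$ being finite on $\Omega_0$), while the $i=k-1$ summand equals $kA_{k-1}(x)$ for every $y$. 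Hence the right-hand limit of the difference quotient exists if and only if $L(x)$ exists, and on that event the two differ precisely by the additive term $kA_{k-1}(x)$.

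Finally I would feed in Theorem~\ref{thm: Lipschitz} once more: on $\Omega_0$ the function $A_k(\cdot)$ is locally Lipschitz, hence absolutely continuous on compact subintervals, hence differentiable at Lebesgue-almost every $x\in\mathbb{R}_+$. At such a point the two-sided derivative exists, so in particular the right-hand difference quotient converges, and the equivalence from the previous paragraph then yields that $L(x)$ exists and $A_k'(x)=kA_{k-1}(x)+L(x)$. Since $\mathbb{P}\{\Omega_0\}=1$, this is exactly the assertion. I do not anticipate a genuine obstacle: the only points needing a little care are the pathwise finiteness of the $A_i(x)$ (so that the finite sum converges term by term) and the bookkeeping of which powers of $y$ survive the limit -- the substantive work, local Lipschitz continuity, having already been carried out in the proof of Theorem~\ref{thm: Lipschitz}.
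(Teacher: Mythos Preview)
Your proposal is correct and follows exactly the paper's approach: the paper's entire proof is the single sentence ``Since Lipschitz continuity implies differentiability a.e., the following corollary is an immediate consequence of Theorem~\ref{thm: Lipschitz}.'' Your argument is simply a careful expansion of that sentence, via the difference-quotient identity already recorded as \eqref{eq: derivative upper bound} in the proof of Theorem~\ref{thm: Lipschitz}.
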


\begin{remark}
    \normalfont 
    Note that $A_{k-1}(\cdot)$ is nondecreasing, and that the distribution of $L(x)$ does not depend on $x$. Therefore, it is tempting to conclude that the sample paths of $A_k(\cdot)$ are $\mathbb{P}$-a.s.\ convex. This conclusion is, however, incorrect since it is possible to have a sequence of stochastically increasing random variables that with positive probability will not have increasing values. For an example of a slightly different cost process in an M/G/1 setup which has convex sample paths, we refer to \cite[Corollary 2]{Jacobovic2022}. More examples of convex stochastic processes which arise in various  applied probability models are found in \cite{Jacobovic2020}.
\end{remark}

\subsection{Differential equation}
In this subsection we discuss a specific (ordinary) differential equation by which we can characterize moments of $A_f(x)$ in the case $f(\cdot)={\rm Id}$. For simplicity, we introduce it for the special case in which $k=1$ and under the assumption that $\eta_i<\infty$ for all $i\in{\mathbb N}$. We conclude this section by pointing out possible interpretations which may
lead to possible directions for future research.

For $u\in\mathbb{N}$ we introduce the set
\begin{equation}
    \mathcal{I}(u)\equiv\left\{{\boldsymbol i}\in\mathbb{N}^u:  \ i_j\leqslant 2j-1+\sum_{a=1}^{u-1}i_a\  ,\ j\in\{ 1,\ldots, u\} \ \text{and} \ \sum_{j=1}^ui_j=2u-1\right\};
\end{equation}
which is not empty because $(0,0,\ldots,0,2u-1)\in\mathcal{I}(u)$. Also, define
\begin{equation}
    \theta(u)\equiv u!(-\rho)^{-u}\sum_{\boldsymbol{i}\in\mathcal{I}(u)}\prod_{j=1}^u\binom{2j-1-\sum_{a=1}^{j-1}i_a}{i_j}\frac{(-1)^{i_j}\psi_{i_j}}{2j-\sum_{a=1}^ji_a}.
\end{equation}
\begin{theorem}
    Assume that $\eta_i<\infty$ for all $i\in{\mathbb N}$. 
Then, the following recursion in $\ell$ applies:
\begin{equation}\label{eq: recusrion ODE}
    \mathbb{E}A_1^\ell(x)= \sum_{i=0}^{\ell-1}\binom{\ell}{i}\theta(\ell-i)\,\int_0^x\mathbb{E}A^i_1(y)\,{\rm d}y\ \ , \ \ x\geqslant0\,.
\end{equation}
\end{theorem}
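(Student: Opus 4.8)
The plan is to run everything through the cost decomposition \eqref{eq: cost decomposition}, specialised to $f={\mathcal P}_1$, in combination with the polynomial representation of the moments of $A_1(\cdot)$ supplied by Section~\ref{sec: high-order moments}. Throughout write $\phi_m(x):=\mathbb EA_1^m(x)$, so that $\phi_0\equiv 1$ and $\phi_m(0)=0$ for $m\ge1$ (as $\tau_0=0$). The starting point is the \emph{linear decomposition}: taking $f={\mathcal P}_1$ in \eqref{eq: cost decomposition} and using that its second summand is independent of ${\mathcal F}_{\tau_x}$ with the law of $A_1(y)$ yields, for every $y>0$,
\[
A_1(x+y)=A_1(x)+y\,\tau_x+\widetilde A_1(y),\qquad \widetilde A_1(y)\stackrel{\mathrm d}{=}A_1(y),\quad \widetilde A_1(y)\perp\bigl(A_1(x),\tau_x\bigr).
\]

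Next I would \emph{identify $\theta$}. By the remark following Theorem~\ref{thm: Gamma} (with all $f_i={\mathcal P}_1$) together with Corollary~\ref{cor: polynomial2} (with all $k_j=1$), each $\phi_m$ is a polynomial whose monomials carry the exponents $\sum_{j=1}^{m}(2-i_j)$ occurring there; the minimal admissible value of this exponent is $1$, so $\phi_m$ has no constant term, and — after matching the summation ranges in Corollary~\ref{cor: polynomial2} against the constraints defining ${\mathcal I}(m)$ — the coefficient of $x$ in $\phi_m$ is exactly $\theta(m)$. In particular $\phi_m(y)=\theta(m)\,y+O(y^2)$ as $y\downarrow 0$ for $m\ge1$, while $\phi_0\equiv 1$.

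Then I would \emph{extract the first-order term}. Raising the linear decomposition to the $\ell$-th power and taking expectations, the independence of $\widetilde A_1(y)$ gives
\[
\phi_\ell(x+y)=\sum_{m=0}^{\ell}\binom{\ell}{m}\,\mathbb E\bigl[(A_1(x)+y\,\tau_x)^m\bigr]\,\phi_{\ell-m}(y),
\]
and expanding $(A_1(x)+y\,\tau_x)^m$ by the binomial theorem makes the right-hand side a polynomial in $y$ with coefficients built from the $\phi_j$'s and the mixed moments $\mathbb E[A_1^{\,b}(x)\tau_x^{\,a}]$. Since $\phi_\ell$ is itself a polynomial (previous step), $\phi_\ell(x+y)=\phi_\ell(x)+\phi_\ell'(x)\,y+O(y^2)$; equating the $y^1$-coefficients of the two sides and using $\phi_m(y)=\theta(m)y+O(y^2)$ produces a first-order identity for $\phi_\ell'(x)$ in terms of $\phi_0(x),\dots,\phi_{\ell-1}(x)$ with coefficients $\binom{\ell}{i}\theta(\ell-i)$ (together with the contribution of the mixed moment $\mathbb E[A_1^{\ell-1}(x)\tau_x]=\mathbb E[A_1^{\ell-1}(x)A_0(x)]$). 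Integrating in $x$ from $0$ and invoking $\phi_\ell(0)=0$ delivers \eqref{eq: recusrion ODE}.

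The main obstacle is twofold. The laborious part is the purely algebraic identification of $\theta(m)$ with the coefficient of $x$ in $\phi_m$: this is a bookkeeping of the admissible index vectors in Corollary~\ref{cor: polynomial2} against the definition of ${\mathcal I}(m)$, and carries no probabilistic content, but must be done carefully. The second delicate point is the mixed-moment contribution $\mathbb E[A_1^{\ell-1}(x)A_0(x)]$ that surfaces in the $y^1$-matching — it is again a polynomial joint moment of the functionals $A_{{\mathcal P}_1}$ and $A_{{\mathcal P}_0}$, so Theorem~\ref{thm: Gamma} and Corollary~\ref{cor: polynomial2} apply and one must check that it is correctly folded back into the $\theta$-terms of the displayed recursion. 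A welcome feature of the argument is that it never requires interchanging an expectation with a pathwise derivative: everything is carried out at the level of the deterministic polynomials $\phi_m$, the only analytic inputs being the polynomiality coming from Theorem~\ref{thm: Gamma} and the boundary value $A_1(0)=0$.
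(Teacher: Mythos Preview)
Your approach is the paper's: specialize the decomposition \eqref{eq: cost decomposition} to $f=\mathcal P_1$, identify $\theta(m)$ with $\lim_{y\downarrow 0}\phi_m(y)/y$ via Theorem~\ref{thm: Gamma} and Corollary~\ref{cor: polynomial2}, expand $\phi_\ell(x+y)$, read off the linear-in-$y$ part to get an ODE, and integrate. The paper proceeds identically but writes the limit computation more tersely.

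Where you go further than the paper is in flagging the mixed-moment term $\ell\,\mathbb E[A_1^{\ell-1}(x)\,\tau_x]$ that also sits in the $y^1$-coefficient (it comes from the summand $m=\ell$, with $(y\tau_x)^1\,\widetilde A_1(y)^{0}$). Your hope that this term ``is correctly folded back into the $\theta$-terms'' is, however, not realized: already for $\ell=1$ the displayed recursion would give $\mathbb E A_1(x)=\theta(1)\,x$, whereas Corollary~\ref{cor: polynomial} with $V\equiv x$ yields $\mathbb E A_1(x)=\dfrac{x^2}{2(-\varrho)}+\theta(1)\,x$, and the discrepancy $\dfrac{x^2}{2(-\varrho)}=\int_0^x\mathbb E\tau_y\,{\rm d}y$ is precisely the integrated mixed-moment contribution. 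The ODE that actually follows from your expansion is
\[
\phi_\ell'(x)=\sum_{i=0}^{\ell-1}\binom{\ell}{i}\theta(\ell-i)\,\phi_i(x)\;+\;\ell\,\mathbb E\!\bigl[A_1^{\ell-1}(x)\,\tau_x\bigr],
\]
so integrating does \emph{not} deliver \eqref{eq: recusrion ODE} as stated. In short: your derivation is correct up to and including the differential identity, but the final step cannot close because the target recursion is missing the $\tau_x$-term; the paper's proof makes the same slip by silently discarding the $j=1$ contribution when passing to the limit in \eqref{eq: pre-derivative}.
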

\begin{proof}
    Applying Theorem \ref{thm: Gamma} and Corollary \ref{cor: polynomial2}, one can  verify that, for any such $u$,
\begin{equation}
    \theta(u)=\lim_{y\downarrow0}\frac{\mathbb{E}A_1^u(y)}{y}\,.
\end{equation}
Now, for an integer $\ell$ the decomposition \eqref{eq: cost decomposition}  yields that, for any $x,y>0$,
\begin{equation}
    \mathbb{E}A^\ell_1(x+y)=\mathbb{E}\left[A_1(x)+y\tau_x+\widetilde{A}_1(y)\right]^\ell\,.
\end{equation}
As a consequence,
\begin{align}\label{eq: pre-derivative}
    \frac{\mathbb{E}A^\ell_1(x+y)-\mathbb{E}A^\ell_1(x)}{y}&=\frac{1}{y}\sum_{i=0}^{\ell-1}\binom{\ell}{i}\mathbb{E}\left[A_1^i(x)\left[y\tau_x+\widetilde{A}_1(y)\right]^{\ell-i}\right]\\&=\sum_{i=0}^{\ell-1}\sum_{j=0}^{\ell-i}\binom{\ell}{i}\binom{\ell-i}{j}y^j\mathbb{E}\big[A^i_1(x)\overline{\tau}^j_x\big]\,\frac{\mathbb{E}\widetilde{A}_1^{\ell-i-j}(y)}{y}\,.\nonumber
\end{align}
Thus, by taking the limit as $y\downarrow0$ from both sides of \eqref{eq: pre-derivative}, we thus obtain the  differential equation
\begin{equation}\label{eq: ode}
    \frac{{\rm d}}{{\rm d}x}\mathbb{E}A_1^\ell(x)=\sum_{i=0}^{\ell-1}\binom{\ell}{i}\,\theta(\ell-i)\,\mathbb{E}A^i_1(x)\,.
\end{equation}
An integration of both sides, using the initial condition $A_1^ \ell(0)=0$, yields the recursion.
\end{proof}

\color{black}
\begin{remark}
    \normalfont Equation
\eqref{eq: ode} is particularly intriguing as it provides an expression for  the infinitesimal generator of the process $A^\ell_1(\cdot)$ in terms of the processes $A_1^ i(\cdot)$ for $i\in\{1,\ldots, \ell-1\}$. This argument may be repeated for each of the processes $A_1^ i(\cdot)$ for $i\in\{1,\ldots, \ell-1\}$, indicating that there is an `integro-differential relation' between the underlying infinitesimal generators. Backed by the findings of \cite{Glynn1994}, one may anticipate that
there is a relation to a corresponding Poisson's equation. In this respect, it is noticed that \eqref{eq: cost decomposition} shows that $x\mapsto \left(A_1(x),\tau_x\right)$ is a Markov process, even though $A_1(\cdot)$ is {\it not} a Markov process on its own. 

\end{remark}
%We suggest this direction for future research.   
\color{black}

\begin{remark}
    \normalfont 
    The `integro-difference equation' \eqref{eq: recusrion ODE} allows an explicit assessment in the transform domain. Defining $\bar a_\ell(x)\equiv \mathbb{E}A_1^\ell(x)/\ell!$ and $\bar \theta(\ell)\equiv \theta(\ell)/\ell!$, multiplying the entire equation \eqref{eq: recusrion ODE} by $\beta e^{-\beta x}$ and integrating over the positive halfline yields (by swapping the order of the two integrals)
    \begin{align}\label{eq103}
        a_\ell(\beta)&\equiv \beta \int_0^\infty e^{-\beta x} \bar a_\ell(x)\,{\rm d}x = \beta \int_0^\infty e^{-\beta x}\sum_{i=0}^{\ell-1}\bar\theta(\ell-i)\,\int_0^x \bar a_i(y)\,{\rm d}y\,{\rm d}x= \sum_{i=0}^{\ell-1}\bar\theta(\ell-i)\,a_i(\beta).
    \end{align}
    Now multiplying \eqref{eq103} by $z^\ell$ (with $|z|<1$) and summing over $\ell\in{\mathbb N}$, denoting $\Theta(z)\equiv \sum_{\ell=1}^\infty z^\ell \bar\theta(\ell)$, 
    \begin{align}
        {\mathscr A}(z,\beta)&\equiv \sum_{\ell =1}^\infty z^\ell a_\ell(\beta) =
        \sum_{\ell =1}^\infty z^\ell\sum_{i=0}^{\ell-1}\bar\theta(\ell-i)\,a_i(\beta)\\ \notag
        \notag &= \sum_{i=0}^\infty \left(\sum_{\ell=i+1}^\infty z^{\ell-i} \,\bar\theta(\ell-i)\right) \,z^i a_i(\beta)= \Theta(z) \,\left( {\mathscr A}(z,\beta) + a_0(\beta)\right).
    \end{align}
    We conclude that the double transform ${\mathscr A}(z,\beta)$ satisfies
    \begin{equation}
    {\mathscr A}(z,\beta) = \frac{\Theta(z)}{1-\Theta(z)} a_0(\beta)
    \end{equation}
    for $|z|<1.$ Here $a_0(\beta)=\beta\int_0^\infty e^{-\beta x}\,{\mathbb E}\tau_x\,{\rm d}x=-1/(\beta\varrho)$.
\end{remark}

 \section{The area beneath $\overline{W}_V(\cdot)$ until an exponential time}\label{sec: transform}
    In this paper, we devised an algorithm to evaluate (joint) high-order moments of $A_f$, $D_g$, and $\tau$, in the general L\'evy-driven setup that we discussed. In this section we provide a potential application of this result. Namely, consider the case when: (1) $J(\cdot)$ is a CPP with an arrival rate $\lambda$ and nonnegative jumps having cdf $G(\cdot)$ minus a unit drift. (2) $V$ is distributed according to $G(\cdot)$. Recall the definition of the Skorokhod-reflected process $\overline{W}_V(\cdot)\equiv \overline{W}(\cdot)$ from \eqref{SKOR}. We now show that the main results of  this paper can be applied in order to identify the LST of
    \begin{equation}\label{AVT}
    \overline{\color{black}{\boldsymbol A}\color{black}}_V(T_\beta)\equiv\int_0^{T_\beta}\overline{W}_V(t)\,{\rm d}t\,=
    \int_0^{T_\beta}\int_0^\infty \overline{W}_x(t)\,{\rm d}G(x)\,{\rm d}t,
    \end{equation}
    where $T_\beta$ is an exponentially distributed random variable with rate $\beta$ which is independent from both $V$ and $J(\cdot)$. %The main contribution of this section is that we derive a `translation formula', showing that the joint LST of $A$ and $\tau$ can be expressed in terms of the LST of the area beneath the workload process until an independent exponentially distributed time, and vice versa. 
By first dividing this quantity by $\beta$, and then (numerically) inverting it   with respect to $\beta$, we obtain access to the distribution of $\overline {\color{black}{\boldsymbol A}\color{black}}_V(t)$.
Algorithms for numerical inversion can be found in e.g.\  \cite{Abate1995}.

The line of reasoning is as follows.
\begin{itemize}
    \item[$\circ$]
     Our goal is to identify the LST of \eqref{AVT}, i.e., 
\begin{equation}\Omega(\alpha,\beta)\equiv  {\mathbb E} \, e^{-\alpha \overline {\color{black}{\boldsymbol A}\color{black}}_V(T_\beta)}=\int_0^\infty {\mathbb E} \, e^{-\alpha \overline {\color{black}{\boldsymbol A}\color{black}}_x(T_\beta)}\,{\rm d}G(x). \end{equation}
\item[$\circ$]
For every $x,t\geqslant 0$ define \color{black} ${\boldsymbol A}_x(t)\equiv\int_0^t W_x(s)\,{\rm d}s$ \color{black} and \color{black} $\overline {{\boldsymbol A}}_x(t)\equiv\int_0^t \overline W_x(s)\,{\rm d}s$\color{black}. 
 In particular, observe that 
\begin{equation}\Xi(\alpha,\beta)\equiv \int_0^\infty {\mathbb E} \, e^{-\alpha  \overline {\color{black}{\boldsymbol A}\color{black}}_x(\tau_x)}\textbf{1}_{\{\tau_x<T_\beta\}} {\rm d}G(x)= {\mathbb E}\,e^{-\alpha A-\beta \tau}\,,
\end{equation}
where $A\equiv \int_0^\tau W_V(t)\,{\rm d}t.$
Observe that we (in principle) have access to $\Xi(\alpha,\beta)$, as, by applying the results of the previous sections, we have an algorithm to compute the joint moments of $A$ and $\tau$ in terms of the model primitives; note that
\begin{equation}\Xi(\alpha,\beta)= \sum_{m=0}^\infty\sum_{n=0}^\infty \frac{\alpha^m\beta^n}{m!n!}{\mathbb E}\,A^m\tau^n.\end{equation}
In practical terms, if one were able to numerically evaluate $\Xi(\alpha,\beta)$, a pragmatic approach is to approximate it by a truncated Taylor series, e.g.,
 \begin{equation}\Xi(\alpha,\beta) \approx \Xi_M(\alpha,\beta)\equiv \sum_{m,n\in\mathbb{N}_0:m+n\leqslant M} \frac{\alpha^m\beta^n}{m!n!}{\mathbb E}\,A^m\tau^n\end{equation}
 for some $M\in{\mathbb N}$ chosen such that a given accuracy level is achieved.
 \item[$\circ$]
 In Theorem \ref{thm: transform} we succeed in expressing $\Omega(\alpha,\beta)$ in terms of $\Xi(\alpha,\beta)$. We have thus developed a way to evaluate the transform $\Omega(\alpha,\beta)$.
\end{itemize}
 In our analysis, we also use the transform
\begin{equation}\label{eq: pi}
      \Pi(\alpha,\beta)\equiv \int_0^\infty \frac{\beta}{\alpha x+\beta-\bar\varphi(\alpha)}\,{\rm d}G(x)=\int_0^\infty \beta \,\widetilde G(\alpha s)\,e^{-(\beta-\bar\varphi(\alpha))s}\,{\rm d}s,
  \end{equation}
   where $\bar\varphi(\alpha)\equiv \int_0^1\varphi(\alpha t)\,{\rm d}t$ and $\widetilde{G}(\cdot)$ is the LST of $G(\cdot)$. Especially, notice that the equality in \eqref{eq: pi} follows by first writing (in the rightmost expression, that is) $\widetilde G(\alpha s)$ as an integral and then swapping the order of the two integrals.

 \begin{theorem}\label{thm: transform}
For any $\alpha,\beta>0$, 
\begin{equation}\label{eq: exponential}
    \Omega\left(\alpha,\beta\right)=\frac{\displaystyle (\lambda+\beta)\Pi\left(\alpha,\beta\right)+\beta\,\Xi\left(\alpha,\beta\right)\left(1-\frac{\lambda+\beta}{\beta-\overline{\varphi}(\alpha)}\right)}{\displaystyle \lambda+\beta-{\lambda\,\Xi\left(\alpha,\beta\right)}}.
\end{equation}
%\begin{equation}
%\Xi(\alpha,\beta)=\big({\displaystyle \Omega(\alpha,\beta)  -\Pi(\alpha,\beta)}\big)\left({\displaystyle \frac{\lambda}{\lambda+\beta}\Omega(\alpha,\beta)+\frac{\beta}{\lambda+\beta}-\frac{\beta}{\beta-\bar\varphi(\alpha)} } \right)^{-1}.
%\end{equation}
 \end{theorem}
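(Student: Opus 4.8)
The plan is to decompose the area $\overline{\boldsymbol A}_V(T_\beta)$ according to whether the exponential clock $T_\beta$ rings before or after the first hitting time $\tau_V$, and then to exploit the regenerative structure of $\overline W_V(\cdot)$ together with the memorylessness of $T_\beta$. Concretely, I would first condition on $V=x$ and write
\begin{equation}
{\mathbb E}\,e^{-\alpha\overline{\boldsymbol A}_x(T_\beta)}={\mathbb E}\,e^{-\alpha\overline{\boldsymbol A}_x(T_\beta)}\textbf{1}_{\{T_\beta<\tau_x\}}+{\mathbb E}\,e^{-\alpha\overline{\boldsymbol A}_x(T_\beta)}\textbf{1}_{\{T_\beta\geqslant\tau_x\}}\,.
\end{equation}
On the event $\{T_\beta\geqslant\tau_x\}$, note that before $\tau_x$ the reflected process $\overline W_x(\cdot)$ coincides with the unreflected $W_x(\cdot)$, so the accumulated area there equals $\boldsymbol A_x(\tau_x)=A$ (when $V=x$); moreover, by the strong Markov property applied at $\tau_x$ and the lack-of-memory of $T_\beta$, the residual clock is again exponential with rate $\beta$, and the process restarts from level $0$. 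This produces a term of the form ${\mathbb E}\,e^{-\alpha A-\beta\tau}$ times ${\mathbb E}\,e^{-\alpha\overline{\boldsymbol A}_0(T_\beta)}$, i.e.\ $\Xi(\alpha,\beta)$ times the transform started from the origin.

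The second ingredient is to handle the transform started from the origin. Here $\overline W_0(\cdot)$ is the stationary M/G/1 workload process; between successive visits to $0$ it evolves as a shifted copy of some $W_{V'}(\cdot)$ with $V'\sim G$, and the first such excursion begins only after the first arrival, which occurs after an $\mathrm{Exp}(\lambda)$ idle period during which the area does not accumulate (the workload is $0$). Splitting on whether $T_\beta$ falls in this initial idle period or not (again using memorylessness), one gets a relation expressing ${\mathbb E}\,e^{-\alpha\overline{\boldsymbol A}_0(T_\beta)}$ in terms of itself, of $\Xi(\alpha,\beta)$, and of a ``killed excursion'' transform. The killed-excursion transform is precisely where $\Pi(\alpha,\beta)$ enters: on the event that the clock rings during an excursion started from level $x$ before that excursion ends, $\overline W$ again agrees with the unreflected $W_x$, and one must compute ${\mathbb E}\,e^{-\alpha\int_0^{T_\beta}(x+J(s))\,\textbf{1}_{\{T_\beta<\tau_x\}}}\cdots$; conditioning on $T_\beta=s$ and using that $\int_0^s(x+J(t))\,{\rm d}t$ has conditional Laplace transform $e^{-\alpha x s}{\mathbb E}\,e^{-\alpha\int_0^s J(t)\,{\rm d}t}$, and that ${\mathbb E}\,e^{-\alpha\int_0^s J(t)\,{\rm d}t}=e^{-s\bar\varphi(\alpha)}$ by Fubini on the Laplace exponent, reproduces the integrand $\beta/(\alpha x+\beta-\bar\varphi(\alpha))$ after integrating out the $\mathrm{Exp}(\beta)$ clock. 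Integrating over $x\sim G$ gives $\Pi(\alpha,\beta)$.

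Assembling these two relations — one for the full transform $\Omega$ in terms of the origin-transform and $\Xi$, and one for the origin-transform in terms of itself, $\Xi$, and $\Pi$ — yields a pair of linear equations whose solution is \eqref{eq: exponential}; the factor $1-(\lambda+\beta)/(\beta-\overline\varphi(\alpha))$ and the denominator $\lambda+\beta-\lambda\,\Xi(\alpha,\beta)$ should both fall out of this elimination after collecting terms. I expect the main obstacle to be bookkeeping the excursion decomposition correctly: one must be careful that ``area accumulated during the idle period'' is genuinely zero, that the residual exponential clock is independent of the pre-$\tau$ path (which follows from memorylessness plus independence of $T_\beta$ from $J(\cdot)$ and $V$), and that the renewal/regenerative argument for $\overline W_0(\cdot)$ is set up so that the self-referential equation for the origin-transform is valid rather than merely asymptotic. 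A clean way to avoid asymptotic subtleties is to run the whole argument via the strong Markov property at the successive hitting times of $0$ together with the fact that, conditionally on not having been absorbed, the post-hitting-time evolution is an independent copy — this makes the ``equation for the origin-transform'' an exact identity. Once that is in place, \eqref{eq: exponential} is a two-line algebraic manipulation.
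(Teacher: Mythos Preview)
Your overall strategy---decompose on $\{T_\beta<\tau_x\}$ versus $\{T_\beta\geqslant\tau_x\}$, exploit memorylessness and the strong Markov property at $\tau_x$, then close the system via the idle-period decomposition at level $0$---is exactly the paper's. The two-equation formulation is fine; the paper simply substitutes the origin-transform identity ${\mathbb E}\,e^{-\alpha\overline{\boldsymbol A}_0(T_\beta)}=\tfrac{\beta}{\lambda+\beta}+\tfrac{\lambda}{\lambda+\beta}\,\Omega(\alpha,\beta)$ directly into the first equation rather than carrying it as a separate unknown.

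There is, however, a genuine gap in your handling of the term on $\{T_\beta<\tau_x\}$. You claim that conditioning on $T_\beta=s$ and using ${\mathbb E}\exp\bigl(-\alpha\int_0^s J(t)\,{\rm d}t\bigr)=e^{s\bar\varphi(\alpha)}$ reproduces $\beta/(\alpha x+\beta-\bar\varphi(\alpha))$ and hence, after averaging over $x\sim G$, the quantity $\Pi(\alpha,\beta)$. But that calculation computes the \emph{unrestricted} expectation ${\mathbb E}\,e^{-\alpha{\boldsymbol A}_x(T_\beta)}$; once the indicator $\textbf{1}_{\{T_\beta<\tau_x\}}$ is present you cannot integrate $\beta e^{-\beta s}e^{-\alpha x s}e^{s\bar\varphi(\alpha)}$ freely over $s\in(0,\infty)$, because the constraint $s<\tau_x$ couples $s$ to the path of $J(\cdot)$. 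The paper's extra idea is to pass to the complement: write ${\mathbb E}\,e^{-\alpha{\boldsymbol A}_x(T_\beta)}\textbf{1}_{\{\tau_x\geqslant T_\beta\}}={\mathbb E}\,e^{-\alpha{\boldsymbol A}_x(T_\beta)}-{\mathbb E}\,e^{-\alpha{\boldsymbol A}_x(T_\beta)}\textbf{1}_{\{\tau_x<T_\beta\}}$ and evaluate the second piece by letting the \emph{unreflected} process continue past $\tau_x$. Since $W_x(\tau_x)=0$, memorylessness factorizes this as ${\mathbb E}\,e^{-\alpha\overline{\boldsymbol A}_x(\tau_x)}\textbf{1}_{\{\tau_x<T_\beta\}}\cdot{\mathbb E}\,e^{-\alpha{\boldsymbol A}_0(T_\beta)}$, which after averaging over $x$ becomes $\Xi(\alpha,\beta)\cdot\tfrac{\beta}{\beta-\bar\varphi(\alpha)}$. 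This is precisely the source of the factor $(\lambda+\beta)/(\beta-\bar\varphi(\alpha))$ in the numerator of \eqref{eq: exponential}; without this step your decomposition will not produce it.
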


 \begin{proof}
 First, for every $x\geqslant 0$, write
 \begin{equation}\label{eq: decomposition}
 {\mathbb E} \, e^{-\alpha \overline {\color{black}{\boldsymbol A}\color{black}}_x(T_\beta)}={\mathbb E} \, e^{-\alpha \overline {\color{black}{\boldsymbol A}\color{black}}_x(T_\beta)} \textbf{1}_{\{\tau_x<T_\beta\}}+{\mathbb E} \, e^{-\alpha \overline {\color{black}{\boldsymbol A}\color{black}}_x(T_\beta)} \textbf{1}_{\{\tau_x\geqslant T_\beta\}}.\end{equation}
 Owing to the underlying Markovian structure, and distinguishing between the scenario that a second busy period starts before the killing epoch $T_\beta$ and the scenario that it does not, we can rewrite the first term on the right-hand side to
 \begin{align}
    {\mathbb E} \, e^{-\alpha \overline {\color{black}{\boldsymbol A}\color{black}}_x(T_\beta)} \textbf{1}_{\{\tau_x<T_\beta\}}&\equiv  {\mathbb E} \, e^{-\alpha \overline {\color{black}{\boldsymbol A}\color{black}}_x(\tau_x)} \textbf{1}_{\{\tau_x<T_\beta\}}
    \left(\frac{\lambda}{\lambda+\beta}\Omega(\alpha,\beta)+\frac{\beta}{\lambda+\beta}\right).
 \end{align} The second term on the right-hand side of \eqref{eq: decomposition} equals
 \begin{align}
     {\mathbb E} \, e^{-\alpha \overline {\color{black}{\boldsymbol A}\color{black}}_x(T_\beta)} \textbf{1}_{\{\tau_x\geqslant T_\beta\}}&\equiv {\mathbb E} \, e^{-\alpha {\color{black}{\boldsymbol A}\color{black}}_x(T_\beta)} \textbf{1}_{\{\tau_x\geqslant T_\beta\}}=
{\mathbb E} \, e^{-\alpha  \color{black}{\boldsymbol A}\color{black}_x(T_\beta)} - {\mathbb E} \, e^{-\alpha \color{black}{\boldsymbol A}\color{black}_x(T_\beta)}\textbf{1}_{\{\tau_x<T_\beta\}}.\notag 
 \end{align}
Then,
 \begin{align}
     {\mathbb E} \, e^{-\alpha \color{black}{\boldsymbol A}\color{black}_x(T_\beta)} &= \int_0^\infty \beta e^{-\beta s} \,{\mathbb E}\exp\left(-\alpha\int_0^s \left[x+J(t)\right]\,{\rm d}t\right){\rm d}s \\
     &= \int_0^\infty \beta e^{-(\alpha x+\beta) s} \,e^{\bar\varphi(\alpha)\,s}\,{\rm d}s= \frac{\beta}{\alpha x+\beta-\bar\varphi(\alpha)},\notag
 \end{align}
 where in the penultimate equality \cite[Equation (4)]{Blanchet2013} has been used. Also, using the memoryless property of the exponential distribution, 
 \begin{align}
      {\mathbb E} \, e^{-\alpha \color{black}{\boldsymbol A}\color{black}_x(T_\beta)}\textbf{1}_{\{\tau_x<T_\beta\}}&=
       {\mathbb E} \, e^{-\alpha  \overline {\color{black}{\boldsymbol A}\color{black}}_x(\tau_x)}\textbf{1}_{\{\tau_x<T_\beta\}} \,
        {\mathbb E} \, e^{-\alpha A_0(T_\beta)}= {\mathbb E} \, e^{-\alpha  \overline {\color{black}{\boldsymbol A}\color{black}}_x(\tau_x)}\textbf{1}_{\{\tau_x<T_\beta\}} \, \frac{\beta}{\beta-\bar\varphi(\alpha)}.\notag
 \end{align}
 Upon combining the above, we can express $\Omega(\alpha,\beta)$ in itself. Indeed, by integrating the full expression with respect to ${\rm d}G(\cdot)$, 
 \begin{equation}
   \Omega(\alpha,\beta) = \Xi(\alpha,\beta)   \left(\frac{\lambda}{\lambda+\beta}\Omega(\alpha,\beta)+\frac{\beta}{\lambda+\beta}\right) + \Pi(\alpha,\beta) - \Xi(\alpha,\beta) \, \frac{\beta}{\beta-\bar\varphi(\alpha)}.
 \end{equation}
 This finishes the proof. \end{proof}
 
\section{Demonstration}\label{sec: demonstration}
To demonstrate our method producing expression for moments, we show that it provides us with an easy derivation of the existing results \eqref{eq: Iglehart1971} and \eqref{eq: Cohen1978}. Consider the special case when \color{black} $\sigma=0$ and \color{black} $J(\cdot)$ is a CPP with arrival rate $\lambda$ and nonnegative jumps with the distribution function $G(\cdot)$, minus a drift of rate one. In addition, the initial push $V$ is distributed according to the distribution function $G(\cdot)$ as well, and it is assumed that $\varrho=\rho-1=\lambda\mu_1-1<0$ to ensure stability. 

It is easy to verify that $-\varphi_1=-\varrho=1-\rho$ and $(-1)^k\varphi_k=\lambda\mu_k$ for any $k\in\{2,3,\ldots\}$, with (as before) $\mu_i$ denoting the $i$-th moment of the jump distribution of $J(\cdot)$. Consequently,  Theorem \ref{thm: Takacs} (with \eqref{eq: Takacs expectation}) implies that, for every $k\in{\mathbb N}$,
\begin{equation}\label{eq: Takacs M/G/1}
    (-1)^k\psi_k=-\frac{\lambda}{(k+1)\varphi_1}\sum_{i=0}^{k-1}\binom{k+1}{i}(-1)^i\psi_i\mu_{k+1-i};
\end{equation}
it is instructive to compare \eqref{eq: Takacs M/G/1} with \cite[Theorem 5]{Takacs1962}. In particular, we obtain that
\begin{equation}\label{eq: psi1}
    -\psi_1=-\frac{\varphi_2}{2\varphi_1}=\frac{\lambda\mu_2}{2(1-\rho)}\,.
\end{equation}
Therefore, an insertion of this expression into Corollary \ref{cor: polynomial} (with $k=1$) implies that 
\begin{equation}
A_{\mathcal{P}_1}=-\frac{1}{\varrho}\left(\frac{\mu_2}{2}-\mu_1\psi_1\right)=\frac{\mu_2}{2(1-\rho)^2}
\end{equation}
which is in line with \eqref{eq: Iglehart1971}. 

The next goal is to compute the second moment of $A_{\mathcal{P}_1}$. To this end, apply \eqref{eq: Takacs M/G/1} in order to compute $(-1)^2\psi_2$ and $(-1)^3\psi_3$. Specifically, 
\begin{align}\label{eq:psi2}
    (-1)^2\psi_2&=\frac{\lambda}{3(1-\rho)}\sum_{i=0}^1\binom{3}{i}(-1)^i\psi_i\mu_{3-i}\\&\nonumber=\frac{\lambda}{3(1-\rho)}\left[\mu_3+3\mu_2\cdot\frac{\lambda\mu_2}{2(1-\rho)}\right]=\frac{\lambda\mu_3}{3(1-\rho)}+\frac{\lambda^2\mu_2^2}{2(1-\rho)^2}\,,
\end{align}
and
\begin{align}\label{eq:psi3}
    (-1)^3\psi_3&=\frac{\lambda}{4(1-\rho)}\sum_{i=0}^2\binom{4}{i}(-1)^i\psi_i\mu_{4-i}\\&\nonumber=\frac{\lambda}{4(1-\rho)}\left[\mu_4+\frac{2\lambda\mu_2\mu_3}{1-\rho}+\frac{2\lambda\mu_3\mu_2}{1-\rho}+\frac{3\lambda^2\mu_2^3}{(1-\rho)^2}\right]\,.
\end{align}
Now, an application of Theorem \ref{thm: Gamma} and an integration of both sides  of  \eqref{eq: polynomial22} (when $k_1=k_2=1$) with respect to ${\rm d}G(x)$ imply that 
\begin{equation}\label{eq: second moment}
    \mathbb{E}A^2_{\mathcal{P}_1}=\frac{2}{(1-\rho)^2}\sum_{i=0}^{1}\binom{1}{i}\frac{(-1)^{i}\psi_i}{2-i}\sum_{j=0}^{3-i}\binom{3-i}{j}\frac{(-1)^j\psi_j}{4-i-j}\mu_{4-i-j}\,.
\end{equation}
Thus, an insertion of \eqref{eq: psi1}, \eqref{eq:psi2} and \eqref{eq:psi3} into \eqref{eq: second moment} yields the same result as in \eqref{eq: Cohen1978}: 
\begin{align}
    (1-\rho)^2\mathbb{E}A^2_{\mathcal{P}_1}&=-2\psi_1\sum_{i=0}^2\binom{2}{i}\frac{(-1)^i\psi_i}{3-i}\mu_{3-i}+2\cdot\frac{1}{2}\sum_{i=0}^3\binom{3}{i}\frac{(-1)^i\psi_i}{4-i}\mu_{4-i}\\&\nonumber=\frac{\lambda\mu_2}{1-\rho}\left[\frac{\mu_3}{3}+\frac{\lambda\mu_2^2}{2(1-\rho)}+\frac{\rho\mu_3}{3(1-\rho)}+\frac{\lambda\rho\mu_2^2}{2(1-\rho)^2}\right]\\&\nonumber\:\:\:\:\:+\frac{\mu_4}{4}+\frac{\lambda\mu_2\mu_3}{2(1-\rho)}+\frac{\lambda\mu_3\mu_2}{2(1-\rho)}+\frac{3\lambda^2\mu_2^3}{4(1-\rho)^2}\\&\nonumber\:\:\:\:\:+\frac{\rho}{4(1-\rho)}\left[\mu_4+\frac{2\lambda\mu_2\mu_3}{1-\rho}+\frac{2\lambda\mu_3\mu_2}{1-\rho}+\frac{3\lambda^2\mu_2^3}{(1-\rho)^2}\right]\\&=\frac{\mu_4}{4(1-\rho)}+\frac{4\lambda\mu_2\mu_3}{3(1-\rho)^2}+\frac{5\lambda^2\mu_2^3}{4(1-\rho)^3}\,.\nonumber
\end{align}
We can recover known results, but also derive new ones. Motivated by the regenerative method for L\'evy processes with a secondary jump input (as discussed in Section \ref{sec:introduction}), we compute the joint moment of $A$ and $\tau$ in a similar fashion. Namely, observe that
\begin{align}
(1-\rho)^2\mathbb{E}A_{\mathcal{P}_1}A_{\mathcal{P}_0}&=\sum_{i_2=0}^{2}\binom{2}{i_2}\frac{(-1)^{i_2}\psi_{i_2}}{3-i_2}\mu_{3-i_2}+\sum_{i_1=0}^{1}\frac{(-1)^{i_1}\psi_{i_1}}{2-i_1}\sum_{i_2=0}^{2-i_1}\binom{2-i_1}{i_2}\frac{(-1)^{i_2}\psi_{i_2}}{3-i_1-i_2}\mu_{3-i_1-i_2}\nonumber\\&=\frac{\mu_3}{3}+\frac{\lambda\mu_2^2}{2(1-\rho)}+\frac{\rho\mu_3}{3(1-\rho)}+\frac{\rho\lambda\mu_2^2}{2(1-\rho)^2}\:+\nonumber\\&\hspace{1.4cm}\frac{1}{2}\sum_{i_2=0}^{2}\binom{2}{i_2}\frac{(-1)^{i_2}\psi_{i_2}}{3-i_2}\mu_{3-i_2}+\frac{\lambda\mu_2}{2(1-\rho)}\sum_{i_2=0}^{1}\frac{(-1)^{i_2}\psi_{i_2}}{2-i_2}\mu_{2-i_2}\nonumber\\&\nonumber=\frac{3}{2}\left[\frac{\mu_3}{3(1-\rho)}+\frac{\lambda\mu_2^2}{2(1-\rho)^2}\right]+\frac{\lambda\mu_2^2}{4(1-\rho)^2}=\frac{\mu_3}{2(1-\rho)}+\frac{\lambda\mu_2^2}{(1-\rho)^2}\,,\nonumber
\end{align}
and hence 
\begin{equation}\label{eq: joint a tau}
    \mathbb{E}A\tau =\mathbb{E}A_{\mathcal{P}_1}A_{\mathcal{P}_0}=\frac{\mu_3}{2(1-\rho)^3}+\frac{\lambda\mu_2^2}{(1-\rho)^4}\,.
\end{equation}
To the best of our knowledge, the identity \eqref{eq: joint a tau} is new. The above computation demonstrates that it is straightforward to obtain expressions of moments. We in addition note that the underlying recursive structure lends itself well for being evaluated by means of symbolic software. 

\section{Discussion and directions for follow-up research}\label{sec: open problems}
In this work we solved the problem of identifying the high-order moments of the vector \eqref{eq: the vector}. Specifically, we developed a recursive procedure that easily produces these moments in terms of the model primitives. Our findings in particular provide a solution for a long-standing problem posed by Iglehart \cite{Iglehart1971} and Cohen \cite{Cohen1978} about existence of a constructive algorithm yielding the moments of $A_f$ for the case that $f(\cdot)={\rm Id}$. As it was pointed out in Section \ref{sec:introduction}, being able to  solve this open problem is relevant in the context of various  applied probability models, e.g., (1) the computation of the conditional moments of the `area in red' in a L\'evy-driven model given the undershoot, and (2)~the estimation of functionals of the limiting distribution of a L\'evy process with secondary jump input.     

We believe that our results, and the approach followed, naturally give rise to several follow-up questions. A few particularly interesting ones are:

\begin{itemize}
    \item[$\circ$] The current results may be applied to approximate the LST of $A_k(x)$. We expect this approximation to be useful when assessing the tail probability $\mathbb{P}\left\{A_k(x)>u\right\}$ as $u\to\infty$. For some existing works in this direction, see e.g.\ \cite{Arendarczyk2014,Blanchet2013,Borovkov2003}. In case $x$ is sampled according to the distribution function $G_V(\cdot)$, a large deviation of the area can be due to a non-trivial interplay between the secondary jump attaining an unusually high value and the driving L\'evy process $J(\cdot)$ being after time $0$ systematically higher than usual. %Note that an additional application of such an approximation (when the underlying process is a CPP) has already been described in Section \ref{sec: transform}.  
    
    \item[$\circ$] More generally, let $f(\cdot)$ be a nonnegative continuous function.  The Weierstrass approximation theorem states that $f(\cdot)$ can be approximated arbitrarily closely by polynomials (e.g., Bernstein polynomials). This suggests that our approach could be used in order to approximate the moments of $A_f$ for any $f(\cdot)$ (under some regularity conditions).  %Furthermore, once such an approximation exists, it is intriguing to see whether it can be applied in order to derive asymptotic approximation of $\mathbb{P}\left\{A_f>u\right\}$ as $u\to\infty$. 

    \item[$\circ$] Another natural question concerns the identification of conditions under which the sequence 
    \begin{equation}
        \frac{A_k(x)-\mathbb{E}A_k(x)}{\sqrt{\text{Var}\,A_k(x)}}\ \ , \ \ x>0
    \end{equation}
    converges as $x\to\infty$ (for given $k$) to some limiting distribution.  %Also, it is also interesting to characterize the limiting distribution under the needed regularity conditions.

    \item[$\circ$] Consider a sequence of cost processes \[\left\{A_k^{(n)}(\cdot):n\in\mathbb{N}\right\}.\] If  $\varphi_n(\cdot)$ is the L\'evy exponent corresponding to the $n$-th model, what should be assumed on the sequence $\varphi_n(\cdot)$ in order to ensure,  in a proper space of functions, the weak convergence $A_k^{(n)}(\cdot)$ to some limiting process $A(\cdot)$? For a similar analysis of a related process, we refer to   \cite[Section 7]{Jacobovic2023} and  \cite[Section 6]{Jacobovic2022}.

\item[$\circ$] Note that in the current work, we  only consider the case that the driving L\'evy process is a superposition of a drifted Brownian motion and a subordinator, which is a special case of the broader class of  spectrally positive \color{black}  L\'evy processes.  When trying to extend our results to the  spectrally positive \color{black}  L\'evy case, a major bottleneck concerns the generalization of the Takacs' recursion of Theorem \ref{thm: Takacs}; recall that in our proofs we intensively used that our driving L\'evy process was the aggregate of a  drifted Brownian motion and a subordinator.
%In view of this, we wonder whether this Takacs' recursion continues to hold under the assumption that the input of the system is a general \color{red} spectrally positive \color{black}  L\'evy process. Such an extension would immediately imply a generalization of all the other results of the current research to this important more general case. 

\item[$\circ$] In the context of Section \ref{sec: transform}, one may be curious to know whether the relation \eqref{eq: exponential} which appears in Theorem \ref{thm: transform} can be generalized by considering classes of processes that are richer than just CPP. Furthermore, Theorem \ref{thm: transform} is valid only if $f(\cdot)={\rm Id}$, but one could pursue analogous results for more general classes of functionals.  
\end{itemize}
\medskip

{\it Acknowledgment}. We would like to thank Onno Boxma for his helpful comments on an earlier draft of the current manuscript. We are also grateful to H\'el\`ene Gu\'erin and Jean-Fran\c{c}ois Renaud for pointing out the relevance of our results in the insurance context. \color{black} We are indebted to the anonymous referee for providing useful comments which helped us to significantly improve the presentation of our results. \color{black}

\bibliographystyle{plain}

\begin{thebibliography}{111}


\bibitem{Abate1995}
Abate, J., \& Whitt, W.  (1995).
Numerical inversion of Laplace transforms of probability distributions. {\it ORSA Journal on Computing\/} {\bf 7,} 36-43.


\bibitem{Abundo2013}
Abundo, M. (2013). On the first-passage area of a one-dimensional jump-diffusion process. \textit{Methodology and Computing in Applied Probability}, \textbf{15}, 85-103.

\bibitem{Abundo2017}
Abundo, M., \& Vescovo, D. D. (2017). On the joint distribution of first-passage time and first-passage area of drifted Brownian motion. \textit{Methodology and Computing in Applied Probability}, \textbf{19}, 985-996.
\color{black}
\bibitem{Applebaum2009}
Applebaum, D. (2009). \textit{L\'evy Processes and Stochastic Calculus}. Cambridge University Press.

\bibitem{Arendarczyk2014}
Arendarczyk, M, D\c{e}bicki, K., \& Mandjes, M. (2014). On the tail asymptotics of
the area swept under the Brownian storage graph. {\it Bernoulli}, \textbf{20}, 395-415.

\bibitem{Asmussen2003}
Asmussen, S. (2003). \textit{Applied Probability and Queues, 2nd edition}. Springer.

\bibitem{Asmussen2010}
Asmussen, S. and Albrecher, H. (2010).
\textit{Ruin Probabilities, 2nd edition}. 
World Scientific.

\bibitem{Asmussen2007}
Asmussen, S., \& Glynn, P. W. (2007). \textit{Stochastic Simulation: Algorithms and Analysis}. Springer.

\bibitem{Bertoin1996}
Bertoin, J. (1996). \textit{L\'evy Processes}. Cambridge University Press.

\bibitem{Blanchet2013}
Blanchet, J., \& Mandjes, M. (2013). Asymptotics of the area under the L\'evy-driven storage graph. 
{\it Operations Research Letters}, \textbf{41}, 730-736.


\bibitem{Bodas2024}
Bodas, S. A., \& Jacobovic, R. (2024). Useful stochastic bounds in time-varying queues with service and patience times having general joint distribution. \textit{Preprint, arXiv:2406.12745}.
\color{black}
\bibitem{Borovkov2003}
Borovkov, A. A., Boxma, O. J., \& Palmowski, Z. (2003). On the integral of the workload process of the single server queue. \textit{Journal of Applied Probability}, \textbf{40}, 200-225.

\bibitem{Boxma2014}
Boxma, O., \& Kella, O. (2014). Decomposition results for stochastic storage processes and queues with alternating L\'evy inputs. \textit{Queueing Systems}, \textbf{77}, 97-112.

\bibitem{Brill2008}
Brill, P. H. (2008). \textit{Level Crossing Methods in Stochastic Models}. Springer.


\bibitem{Cinlar1972}
\c{C}inlar, E. (1972). Superposition of point processes. In: \textit{Stochastic Point Processes}, ed. P.~Lewis, Wiley,  546-606.


\bibitem{Cohen1978} Cohen, J. W. (1978). Properties of the process of level crossings during a busy cycle of the M/G/1 queueing system. \textit{Mathematics of Operations Research}, \textbf{3}, 133-144.

\bibitem{Cohen2012}
Cohen, J. W. (2012). \textit{The Single Server Queue}. Elsevier.

\bibitem{Debicki2015}
Debicki, K., \& Mandjes, M. (2015). \textit{Queues and L\'evy fluctuation theory}. Springer.

\bibitem{Delbaen1990}
Delbaen, F. (1990). A reminder on the moments of ruin time in classical risk theory. \textit{Insurance: Mathematics and Economics}, \textbf{9}, 121-126.

\bibitem{Dos Reis1993}
Dos Reis, A. E. (1993). How long is the surplus below zero? \textit{Insurance: Mathematics and Economics}, \textbf{12}, 23-38.

\bibitem{Dos Reis2000}
Dos Reis, A. E. (2000). On the moments of ruin and recovery times. \textit{Insurance: Mathematics and Economics}, \textbf{27}, 331-343.

\bibitem{Ferguson2017}
Ferguson, T. S. (2017). \textit{A Course in Large Sample Theory}. Routledge.

\bibitem{Glynn1994}
Glynn, P. W. (1994). Poisson's equation for the recurrent M/G/1 queue. \textit{Advances in Applied Probability}, \textbf{26}, 1044-1062.

\bibitem{Glynn2001}Glynn, P. W., and Whitt, W. (2002). Necessary conditions in limit theorems for cumulative processes. \textit{Stochastic processes and their applications}, \textbf{98}, 199-209.
\color{black}
\bibitem{Haviv1998}
Haviv, M., \& Ritov, Y. A. (1998). Externalities, tangible externalities, and queue disciplines. \textit{Management Science}, \textbf{44}, 850-858.

\bibitem{Haviv2018a}
Haviv, M., \& Oz, B. (2018). Self-regulation of an unobservable queue. \textit{Management Science}, \textbf{64}, 2380-2389.

\bibitem{Haviv2018b}
Haviv, M., \& Oz, B. (2018). Social cost of deviation: new and old results on optimal customer behavior in queues. {\it Queuing Models and Service Management}, \textbf{1}, 31-58.
\color{black}

\bibitem{Iglehart1971}
Iglehart, D. L. (1971). Functional limit theorems for the queue GI/G/1 in light traffic. \textit{Advances in Applied Probability}, \textbf{3}, 269-281.

\bibitem{Jacobovic2022a}
Jacobovic, R. (2022). Regulation of a single-server queue with customers who dynamically choose their service durations. \textit{Queuing Systems}, \textbf{101}, 245-290.

\bibitem{Jacobovic2019}
Jacobovic, R., \& Kella, O. (2019). Asymptotic independence of regenerative processes with a special dependence structure. \textit{Queueing Systems}, \textbf{93}, 139-152.

\bibitem{Jacobovic2020}
Jacobovic, R., \& Kella, O. (2020). Minimizing a stochastic convex function subject to stochastic constraints and some applications. \textit{Stochastic Processes and their Applications,} \textbf{130}, 7004-7018.

\bibitem{Jacobovic2021}
Jacobovic, R., \& Kella, O. (2021). Steady-state optimization of an exhaustive L\'evy storage process with intermittent output and random output rate. \textit{Probability in the Engineering and Informational Sciences}, \textbf{35}, 730-744.
 
\bibitem{Jacobovic2024}
Jacobovic, R., \& Levering, N. (2024). Assessing the accuracy of externalities prediction in a LCFS-PR M/G/1 queue under partial information. \textit{Operations Research Letters}, 107205.
 

\bibitem{Jacobovic2023}
Jacobovic R., Levering, N., and Boxma, O. (2023). Externalities in the M/G/1 queue: LCFS-PR versus FCFS. \textit{Queueing Systems}, \textbf{104}, 239-267.  

\bibitem{Jacobovic2022}
Jacobovic, R., \& Mandjes, M. (2024). Externalities in queues as stochastic processes: The case of FCFS M/G/1. \textit{Stochastic Systems}, \textbf{14}, 1-21.


\bibitem{Kearney2005}
Kearney, M. J., \& Majumdar, S. N. (2005). On the area under a continuous time Brownian motion until its first-passage time. \textit{Journal of Physics A: Mathematical and General}, \textbf{38}, 4097.

\bibitem{Kearney2007}
Kearney, M. J., Majumdar, S. N., \& Martin, R. J. (2007). The first-passage area for drifted Brownian motion and the moments of the Airy distribution. \textit{Journal of Physics A: Mathematical and Theoretical}, \textbf{40}, F863.

\bibitem{Kearney2014}
Kearney, M. J., Pye, A. J., \& Martin, R. J. (2014). On correlations between certain random variables associated with first passage Brownian motion. \textit{Journal of Physics A: Mathematical and Theoretical}, \textbf{47}, 225002.
 
\bibitem{Kella1998}
Kella, O. (1998). An exhaustive L\'evy storage process with intermittent output. \textit{Stochastic Models}, \textbf{14}, 979-992.

\bibitem{Kella2023}
Kella, O., \& Mandjes, M. (2023). On the area between a L\'evy process with secondary jump inputs and its reflected version. \textit{Preprint, arXiv:2311.08753}.

\bibitem{Kella1991a}
Kella, O., \& Whitt, W. (1991). Queues with server vacations and L\'evy processes with secondary jump input.  \textit{The Annals of Applied Probability}, \textbf{1}, 104-117.

\bibitem{Kella1991b}
Kella, O., \& Whitt, W. (1992). Useful martingales for stochastic storage processes with L\'evy input. \textit{Journal of Applied Probability}, \textbf{29}, 396-403.

\bibitem{Kyprianou2006}
Kyprianou, A. E. (2006). \textit{Introductory lectures on fluctuations of L\'evy processes with applications}. Springer.

\bibitem{Lkabous2023}
Lkabous, M. A., \& Wang, Z. (2023). On the area in the red of L\'evy risk processes and related quantities. \textit{Insurance: Mathematics and Economics}, \textbf{111}, 257-278.


\bibitem{Mandjes2023}
Mandjes, M., \& Boxma, O. (2023). \textit{The Cram\'er–Lundberg Model and Its Variants: A Queueing Perspective}. Springer Nature.
 
\bibitem{Peshkir1998}
Pe\v skir, G. (1998). Optimal stopping inequalities for the integral of Brownian paths. \textit{Journal of Mathematical Analysis and Applications}, \textbf{222}, 244-254.
\color{black}

\bibitem{Picard1994}
Picard, P. (1994). On some measures of the severity of ruin in the classical Poisson model. \textit{Insurance: Mathematics and Economics}, \textbf{14}, 107-115.

\bibitem{Prabhu1998}
Prabhu, N. (1998).
{\it Stochastic Storage
Processes}. Springer. 

\bibitem{Schoutens2003}
Schoutens, W. (2003). \textit{L\'evy processes in Finance: Pricing Financial Derivatives}. Wiley.



\bibitem{Smith1995}
Smith, P. J. (1995). A recursive formulation of the old problem of obtaining moments from cumulants and vice versa. \textit{The American Statistician}, \textbf{49}, 217-218.



\bibitem{Takacs1962}
Tak\'acs, L. (1962). A single-server queue with Poisson input. \textit{Operations Research}, \textbf{10}, 388-394.

\bibitem{Thorisson1992}
Thorisson, H. (1992). Construction of a stationary regenerative process. \textit{Stochastic Processes and their Applications}, \textbf{42}, 237-253.

\bibitem{Wolf1982}
Wolff, R. W. (1982). Poisson arrivals see time averages. {\it Operations Research}, \textbf{30}, 223-231.

\end{thebibliography}
{\small }

\appendix
\section{Proof of Lemma \ref{lemma: psi finite}}
Fix $m,k\in\mathbb{N}$. Note that $c+\mathbb{E}\overline{\Psi}_m(1)<0$ and, therefore, it is possible to decompose $c$ into $c_1+c_2$ such that $c_1<0$ is negative and $c_2$ satisfies $c_2< c_2+\mathbb{E}\overline{\Psi}_m(1) \leqslant c_2+\mathbb{E}\Psi_{0:\infty}(1)<0$. Define two stochastic processes:
    \begin{enumerate}
        \item $P(t)\equiv \sigma B(t)+c_1t$ for every $t\geqslant0$.

        \item $Q(t)\equiv Q_m(t)\equiv V\wedge m+\overline{\Psi}_m(t)+c_2t$ for every $t\geqslant0$;  \end{enumerate}
         i.e., $P(\cdot)$ is a drifted Brownian motion starting at zero and $Q(\cdot)$ is a CPP with drift starting at $V\wedge m$. 
          Denote $y^+\equiv \max\{y,0\}$ for $y\in\mathbb{R}$ and define two random times:
         \begin{itemize}
             \item[$\circ$] The last exit time of $Q(\cdot)$ from the set $(0,\infty)$, i.e.,
             \begin{equation}
             \upsilon_Q\equiv\sup\left\{t\geqslant 0: Q(t)>0\right\}
         \end{equation}
         
         \item [$\circ$] The first time after $\upsilon_Q$ that $P^+(\cdot)$ hits the set $\{0\}$, i.e.,  
         \begin{equation}
         \upsilon_{P,Q}\equiv\inf\left\{t\geqslant \upsilon_Q: P^+(t)=0\right\}\,.   \end{equation}
         
         \end{itemize}
         As $\mathbb{E}\overline{\Psi}_m(1)+c_2\color{black}<0$ and $\mathbb{E}\,P(1)<0$, it holds that $Q(t)\xrightarrow{t\to\infty}-\infty$ and $P(t)\xrightarrow{t\to\infty}-\infty$, $\mathbb{P}$-a.s.\ and hence $\upsilon_Q<\infty$ and $\upsilon_{P,Q}<\infty$, $\mathbb{P}$-a.s.
         Observe also that at each time  $0\leqslant t  <\widetilde{\tau}(m)$, we have $P(t)>0$ or $Q(t)>0$ (or both). Thus, since $P(\upsilon_{P,Q})=0$ and $Q(\upsilon_{P,Q})\leqslant 0$, we have that $\widetilde{\tau}(m)\leqslant \upsilon_{P,Q}$.
         
        Notice that for every $0\leqslant t\leqslant\widetilde{\tau}(m)$ we have that $0\leqslant \widetilde{W}_m(t)=P(t)+Q(t)$ and hence  
        \begin{equation}
            0\leqslant \widetilde{W}^k_m(t)\leqslant \left[P^+(t)+Q^+(t)\right]^k\ \ , \ \ \forall t\in[0,\widetilde{\tau}(m)]\,.
        \end{equation}
         Thus, by applying the Minkowski inequality, it follows that
        \begin{align} \notag \left[\mathbb{E}\int_0^{\widetilde{\tau}(m)}\widetilde{W}_m^k(t){\rm d}t\right]^{\frac{1}{k}}&\leqslant \left\{\mathbb{E}\int_0^{\upsilon_{P,Q}}\left[P^+(t)+Q^+(t)\right]^k{\rm d}t\right\}^{\frac{1}{k}}\\&   \leqslant \left\{\mathbb{E}\int_0^{\upsilon_{P,Q}}\left[P^+(t)\right]^k{\rm d}t\right\}^{\frac{1}{k}}+\left\{\mathbb{E}\int_0^{\upsilon_Q}\left[Q^+(t)\right]^k{\rm d}t\right\}^{\frac{1}{k}}\,. \label{eq: upper bound CG}
        \end{align}
        The goal of the remainder of the proof is to point out why both terms in the upper bound \eqref{eq: upper bound CG} are finite. 
        We start by studying the second term. Recall that $\mathbb{E}\,Q(1)<0$ and hence $Q(t)\xrightarrow{t\to\infty}-\infty$.  Let $\tau_Q$ be the first time that $Q(\cdot)$ hits the level $0$ and also define
        \begin{equation}
            I_1(Q)\equiv\int_0^{\tau_Q}Q^k(t){\rm d}t=\int_0^{\tau_Q}\left[Q^+(t)\right]^k{\rm d}t.
        \end{equation}
        Thus, \color{black} by applying the strong Markov property of $Q(\cdot)$ (see  \cite[Theorem 3.1]{Kyprianou2006}), we have that
        \begin{equation}
         \int_0^{\upsilon_Q}\left[Q^+(t)\right]^k{\rm d}t-I_1(Q)=\color{black} \int_0^\infty\left[Q^+(t)\right]^k{\rm d}t-\int_0^{\tau_Q}\left[Q^+(t)\right]^k{\rm d}t   
        \end{equation}
        has a compound geometric distribution, i.e., is distributed as a geometric number of independent and identically distributed increments (with the sizes of these increments being independent of the geometric number of terms).  This compound geometric distribution is characterized as follows:
        \begin{itemize}
            \item[$\circ$] The probability of success is equal to the probability that $Q(\cdot)$ will not hit $(0,\infty)$ in the future, given that its current state is zero. Keeping in mind that  $Q(t)\xrightarrow{t\to\infty}-\infty$, $\mathbb{P}$-a.s., and recalling that $Q(\cdot)$ is a CPP, this probability is positive and less than one. 

            \item[$\circ$] To represent the distribution that defines the increments, let $\tau_Q'$ be the first time after $\tau_Q$ that $Q(\cdot)$ hits $(0,\infty)$. In addition, let $\tau''_Q$ be the first time after $\tau_Q'$ in which $Q(\cdot)$ hits zero. Note that $\tau'_Q$ (and hence also $\tau''_Q$) may be infinity with a probability strictly between zero and one. Then, the underlying distribution of the increments is the conditional distribution of the integral
            \begin{equation}
            I_2(Q)\equiv\int_{\tau'_Q}^{\tau''_Q}\left[Q^+(t)\right]^k{\rm d}t\,,
            \end{equation}
            given the event $\{\tau'_Q<\infty\}$.  
        \end{itemize} 
        %Note that $V>0$ and since $Q(\cdot)$ has the strong Markov property, deduce that
        %\begin{equation}
            %I'_1(Q)\equiv\int_0^{\widehat{\tau}_Q}\left\{\left[\frac{V}{2}+Q(t)\right]^+\right\}^k{\rm d}t\geqslant I_1(Q)\,,
        %\end{equation}
        %where $\widehat{\tau}_Q$ is the first time after $\tau_Q$ in which $t\mapsto\frac{V}{2}+Q(t)$ hits zero. In particular, notice that $V$ has the same distribution as the jumps of $\overline{\Psi}_m(\cdot)$ and it is independent from $\overline{\Psi}_m(\cdot)$. Hence, $I'_1(Q)$ is distributed like the integral over the workload process in an M/G/1 queue with jumps having the distribution of $V$. This equals to the expected length of the busy period (which is finite because $\mathbb{E}Q(1)<0$) in this queue multiplied by the $k$'th moment of its stationary distribution (which is finite due to \cite[Theorem 5]{Takacs1962} and the detail that $V\leqslant m$, $\mathbb{P}$-a.s), i.e., $\mathbb{E}I_1(Q)\leqslant\ mathbb{E}I_1'(Q)<\infty$.

        To construct the stochastic upper bound on the quantity $I_2(Q)$, recall that the jumps of $Q(\cdot)$ are bounded by $m$. In addition, denote the first time that $t\mapsto m+Q(t)$ hits zero by $\widetilde{\tau}_Q$, and recall that $Q(\cdot)$ is a strong Markov process. Thus, if $\widetilde{V}$ has the distribution of the overshoot $Q(\tau'_Q)$ given the event $\{\tau'_Q<\infty\}$ and it is independent from $Q(\cdot)$, then $I_2(Q)$ is distributed like $I_2'(Q)$ which is given by 
        \begin{equation}
        I_2'(Q)\equiv \int_0^{\upsilon'_Q}\left[\widetilde{V}-V+Q(t)\right]^k{\rm d}t\leqslant \upsilon'_Q\,m^k (N_Q+1)^k   
        \end{equation}
        where $\upsilon'_Q$ is the first time that $t\mapsto \widetilde{V}-V+Q(t)$ hits zero and $N_Q$ is the number of jumps that $Q(\cdot)$ has during the time interval $(0,\upsilon'_Q)$.  Thus, by the discussion provided in the introduction regarding \eqref{eq: lst equation} and \eqref{eq: LST CPP!}, we observe that all the joint moments of $(\upsilon'_Q,N_Q)$ are finite, i.e., we find that 
        \begin{equation}
    \mathbb{E}\left[I_2(Q)\,|\,\tau_Q'<\infty\right]=\frac{\mathbb{E}I_2(Q)\textbf{1}_{\{\tau'_Q<\infty\}}}{\mathbb{P}\{\tau'_Q<\infty\}}\leqslant\frac{\mathbb{E}I'_2(Q)}{\mathbb{P}\{\tau'_Q<\infty\}}<\infty\,,
        \end{equation}
        as required. The proof that $\mathbb{E}I_1(Q)<\infty$ follows via the same arguments which were used to justify that $\mathbb{E}I_2'(Q)<\infty$.
    
   It is left to show that the first term in \eqref{eq: upper bound CG} is finite. %To this end we first observe that,  by applying the Minkowski inequality once more,
    %\begin{equation}\label{eq: bound CG1}
     %   \left\{\mathbb{E}\int_0^{\upsilon_{P,Q}}\left[P^+(t)\right]^k{\rm d}t\right\}^{\frac{1}{k}}\leqslant \left\{\mathbb{E}\int_0^{\upsilon_Q}\left[P^+(t)\right]^k{\rm d}t\right\}^{\frac{1}{k}}+\left\{\mathbb{E}\int_{\upsilon_Q}^{\upsilon_{P,Q}}\left[P^+(t)\right]^k{\rm d}t\right\}^{\frac{1}{k}}\,.
    %\end{equation}
    % Observe that, without loss of generality, we can normalize $P(\cdot)$ such that $\sigma\equiv 1.$
    To this end, as $c_1<0$, we have that $P(t)\leqslant \sigma B(t)$ for any $t\geqslant 0$, which implies that $P^+(t)\leqslant \sigma B^+(t)\leqslant \sigma |B(t)|$ for any $t\geqslant 0$. Noting that $\upsilon_Q$ and the process $B(\cdot)$ are independent, we conclude that, conditional on $\upsilon_Q$, the random time $\upsilon_{P,Q}$ is a stopping time determined by $B(\cdot)$.  Therefore, \cite[Theorem 2.1]{Peshkir1998} implies that there is a constant number $\iota\in(0,\infty)$ such that
    \begin{align}\label{eq: bound CG2}
       \mathbb{E}\left[\int_0^{\upsilon_{P,Q}}
       \left[P^+(t)\right]^k{\rm d}t\right] &
       \leqslant \sigma^k\,\mathbb{E}\left[\mathbb{E}\int_0^{\upsilon_{P,Q}}\left|B(t)\right|^k{\rm d}t\,\bigg|\,\upsilon_Q\right]\\
       &\nonumber\leqslant \sigma^k\,\mathbb{E}\left[\iota\cdot \mathbb{E}\upsilon_{P,Q}^{1+\frac{k}{2}}\,\bigg|\,\upsilon_Q\right]=\iota\cdot \sigma^k\cdot \mathbb{E}\upsilon_{P,Q}^{1+\frac{k}{2}}.
    \end{align}
    The first step in showing that $\upsilon_{P,Q}$ has finite moments is to show that $\upsilon_Q$ has finite moments. Note that $\upsilon_Q=\tau_Q+(\upsilon_Q-\tau_Q)$. As explained in the introduction (see, the discussion about \eqref{eq: LST CPP!}), $\tau_Q$ has finite moments. In addition, due to the strong Markov property of $t\mapsto\color{black}\overline{\Psi}_m(t\color{black})+c_2t$, conditionally on $\tau_Q$, the difference $\upsilon_Q-\tau_Q$ has a compound geometric distribution, that is characterized as follows:
    \begin{itemize}
        \item[$\circ$] The probability of success which is equal to the probability that $Q(\cdot)$ will not \color{black} hit zero in the future given the initial state is zero. Note that as the driving process is of CPP type, this probability is strictly between zero and one. 

        \item[$\circ$] The distribution of the increments corresponds to a convolution of two distributions:\newline 
        \begin{enumerate}
            \item[1.] The distribution of the time to ruin (given that ruin happens) in a Cram\'er-Lundberg model with: \textit{(i)} Initial capital $0$. \textit{(ii)} Premium rate $-c_2$ (where it is kept in mind that $c_2<0$).
            \textit{(iii)} The claim distribution is the distribution of $V\wedge m$. \textit{(iv)} The claim arrival rate is equal to the arrival rate of the jumps in $\overline{\Psi}_m$. Importantly,  the arrival rate is finite and the claim sizes are all bounded by $m$. Therefore, the main result of \cite{Delbaen1990} implies that all moments of this distribution are finite.\newline

            \item[2.] The distribution of the first busy period in a FCFS M/G/1 queue with an initial workload which is distributed like the overshoot of $Q(\cdot)$ at the first moment that it hits $(0,\infty)$ after $\tau_Q$. Importantly, as the jumps of $Q(\cdot)$ are all bounded by $m$ (and so is the overshoot), by the discussion provided in relation to  \eqref{eq: LST CPP!}, we conclude that all the moments of this distributions are finite. 
        \end{enumerate}
    \end{itemize}
    Since the probability of success lies in $(0,1)$ and all the moments of the increments are finite, it follows that the moments of $\upsilon_Q$ are all finite as well.  Thus, it is enough to prove that $\widetilde{\upsilon}\equiv\upsilon_{P,Q}-\upsilon_Q$ has finite moments. To this end, recall that $\upsilon_{P,Q}$ is the first time after $\upsilon_Q$ that $P^{+}(\cdot)$ hits the origin, which implies that 
    \begin{equation}
        \widetilde{\upsilon}\leqslant\inf\left\{t\geqslant0: \sigma |B\left(\upsilon_Q\right)|+\sigma \left[B(t+\upsilon_Q)-B(\upsilon_Q)\right]+c_1t=0\right\}\equiv\widetilde{\upsilon}^\circ
    \end{equation}
    Therefore, by conditioning on $\upsilon_Q$, which is independent of $B(\cdot)$, we obtain that, for any $s\in {\mathbb N}_0$, 
\begin{equation}\mathbb{E}\,[\widetilde{\upsilon}^s]\leqslant\mathbb{E}\left[(\widetilde{\upsilon}^\circ)^s\right]=\mathbb{E}\Big[\mathbb{E}\left[(\widetilde{\upsilon}^\circ)^s\,\big|\,\upsilon_Q,\left\{B(t):0\leqslant t\leqslant \upsilon_Q\right\}\right]\Big]. 
    \end{equation}
    In particular, the strong Markov property of $B(\cdot)$ implies that the inner (conditional) expectation is equal to the $s$-th moment of 
    \begin{equation*}
        \inf\left\{t\geqslant0:U+\sigma B(t)+c_1t=0\right\}
    \end{equation*}
    where $U$ is a random variable having the distribution of $\sigma\,|B(\upsilon_Q)|$, being independent of $B(\cdot)$. Thus, \cite[Proposition 2.5]{Abundo2017} implies that, for any positive integer $s$, $\mathbb{E}[(\widetilde{\upsilon}^\circ)^s]$ is a linear combination of the first $s$ moments of $|B(\upsilon_Q)|$. Note that $\upsilon_Q$ and $B(\cdot)$ are independent, i.e., by proper conditioning and un-conditioning for any $w=1,2,\ldots,s$ we have $\mathbb{E}|B(\upsilon_Q)|^w=\mathbb{E}\upsilon_Q^{w/2}\,\mathbb{E}|Z|^w$ where $Z$ is a standard univariate Gaussian random variable. Hence,  the required property follows from the fact that both $|Z|$ and $\upsilon_Q$ have finite moments. \hfill $\Box$

\end{document}